\title{Proper proximality in non-positive curvature}
\author{Camille Horbez, Jingyin Huang and Jean Lécureux}
\date{\today}
\begin{document}
\maketitle 
\newtheorem{de}{Definition} [section]
\newtheorem{theo}[de]{Theorem} 
\newtheorem{prop}[de]{Proposition}
\newtheorem{lemma}[de]{Lemma}
\newtheorem{cor}[de]{Corollary}
\newtheorem{propd}[de]{Proposition-Definition}
\newtheorem{conj}[de]{Conjecture}
\newtheorem*{mtheo}{Theorem}
\newtheorem{fact}[de]{Fact}
\newtheorem{theointro}{Theorem}
\newtheorem{corintro}[theointro]{Corollary}

\theoremstyle{remark}
\newtheorem{rk}[de]{Remark}
\newtheorem{ex}[de]{Example}
\newtheorem{question}[de]{Question}

\normalsize
\newcommand{\Res}{\mathrm{Res}}
\newcommand{\proj}{\mathrm{proj}}
\newcommand{\NN}{\mathbb N}
\newcommand{\RR}{\mathbb R}
\newcommand{\Aut}{\mathrm{Aut}}
\newcommand{\Out}{\mathrm{Out}}
\newcommand{\dunion}{\sqcup}
\newcommand{\eps}{\varepsilon}
\renewcommand{\epsilon}{\varepsilon}
\newcommand{\calf}{\mathcal{F}}
\newcommand{\cali}{\mathcal{I}}
\newcommand{\caly}{\mathcal{Y}}
\newcommand{\calx}{\mathcal{X}}
\newcommand{\calz}{\mathcal{Z}}
\newcommand{\calo}{\mathcal{O}}
\newcommand{\calb}{\mathcal{B}}
\newcommand{\calq}{\mathcal{Q}}
\newcommand{\calu}{\mathcal{U}}
\newcommand{\call}{\mathcal{L}}
\newcommand{\bbR}{\mathbb{R}}
\newcommand{\bbZ}{\mathbb{Z}}
\newcommand{\bbD}{\mathbb{D}}
\newcommand{\NT}{\mathrm{NT}}
\newcommand{\cat}{\mathrm{CAT}(0)}
\newcommand{\actson}{\curvearrowright}
\newcommand{\caln}{\mathcal{N}}
\newcommand{\calg}{\mathcal{G}}
\newcommand{\Prob}{\mathrm{Prob}}
\newcommand{\calt}{\mathcal{T}}
\newcommand{\calc}{\mathcal{C}}
\newcommand{\adm}{\mathrm{adm}}
\newcommand{\cala}{\mathcal{A}}
\newcommand{\cals}{\mathcal{S}}
\newcommand{\calh}{\mathcal{H}}
\newcommand{\Stab}{\mathrm{Stab}}
\newcommand{\Isom}{\mathrm{Isom}}
\newcommand{\cod}{\Phi^\ast}
\newcommand{\horo}{\overline{\cod}^h}
\newcommand{\horox}{\overline{X}^h}
\newcommand{\bdd}{\mathrm{bdd}}
\newcommand{\calp}{\mathcal{P}}
\newcommand{\flex}{\mathrm{flex}}
\newcommand{\bv}{\mathcal{B}V}
\newcommand{\Fix}{\mathrm{Fix}}
\newcommand{\bad}{\mathrm{bad}}
\newcommand{\prox}{\mathrm{profinite}}
\newcommand{\vis}{\mathrm{vis}}
\newcommand{\Mod}{\mathrm{Mod}}
\newcommand{\diam}{\mathrm{diam}}
\newcommand{\op}{\mathrm{op}}
\newcommand{\Conv}{\mathrm{Conv}}
\newcommand{\Supp}{\mathrm{Supp}}
\newcommand{\WZ}{\mathrm{FC}}

\makeatletter
\edef\@tempa#1#2{\def#1{\mathaccent\string"\noexpand\accentclass@#2 }}
\@tempa\rond{017}
\makeatother

\begin{abstract}
Proper proximality of a countable group is a notion that was introduced by Boutonnet, Ioana and Peterson as a tool to study rigidity properties of certain von Neumann algebras associated to groups or ergodic group actions. In the present paper, we establish the proper proximality of many groups acting on nonpositively curved spaces. 

First, these include many countable groups $G$ acting properly nonelementarily by isometries on a proper $\cat$ space $X$. More precisely, proper proximality holds in the presence of rank one isometries or when $X$ is a locally thick affine building with a minimal $G$-action. As a consequence of Rank Rigidity, we derive the proper proximality of all countable nonelementary $\cat$ cubical groups, and of all countable groups acting properly cocompactly nonelementarily by isometries on either a Hadamard manifold with no Euclidean factor, or on a $2$-dimensional piecewise Euclidean $\cat$ simplicial complex.

Second, we establish the proper proximality of many hierarchically hyperbolic groups. These include the mapping class groups of connected orientable finite-type boundaryless surfaces (apart from a few low-complexity cases), thus answering a question raised by Boutonnet, Ioana and Peterson. We also prove the proper proximality of all subgroups acting nonelementarily on the curve graph.

In view of work of Boutonnet, Ioana and Peterson, our results have applications to structural and rigidity results for von Neumann algebras associated to all the above groups and their ergodic actions.
\end{abstract}

\section*{Introduction}

Motivated by questions concerning the rigidity of von Neumann algebras associated to groups or group actions, Boutonnet, Ioana and Peterson introduced in \cite{BIP} the notion of \emph{proper proximality} of a countable group, generalizing the notion of \emph{bi-exactness} from \cite[Section~15]{BO}. They gave many examples  \cite[Proposition~1.6]{BIP}, including all nonelementary convergence groups and all lattices in noncompact semi-simple Lie groups. This enabled them to obtain the first strong $W^*$-rigidity results for compact actions of $\mathrm{SL}_d(\mathbb{Z})$ with $d\ge 3$. More recently, Ishan, Peterson and Ruth proved that proper proximality is stable under measure equivalence and $W^*$-equivalence, and as a consequence they obtained new examples of properly proximal groups \cite{IPR}. 

A possible definition is as follows. A countable group $G$ is \emph{properly proximal} if there exist a compact $G$-space $K$ that does not carry any $G$-invariant probability measure, and a diffuse probability measure $\eta$ on $K$, such that for every nonprincipal ultrafilter $\omega$ on $G$ and every $h\in G$, one has $\lim\limits_{g\to\omega}((gh)\cdot\eta-g\cdot\eta)=0$ in the weak-$*$ topology. The definition adopted in \cite{BIP} and in Section~\ref{sec:proper-proximality} below involves considering finitely many compact spaces instead of just one, which is helpful in applications, but these definitions are equivalent by \cite[Theorem~4.3]{BIP}. Geometric criteria for checking proper proximality, phrased in a language that might sound more familiar to geometric group theorists, are provided in Section~\ref{sec:proper-proximality} below, and discussed later in this introduction -- under the heading \emph{A word on the proofs}. 

In the present paper, we establish the proper proximality of various classes of groups that satisfy a form of non-positive curvature. We first prove it for many $\cat$ groups, including all nonelementary rank one $\cat$ groups, and all groups acting properly, minimally, nonelementarily by isometries on locally finite thick affine buildings. We also prove it for most hierarchically hyperbolic groups in the sense of Behrstock, Hagen and Sisto \cite{BHS1,BHS} -- see Theorem~\ref{theointro:hhs} below. These include mapping class groups of finite-type surfaces, thus answering a question raised by Boutonnet, Ioana and Peterson in \cite{BIP} -- see Theorem~\ref{theointro:mcg} below. The groups we consider are however not always bi-exact, because they may contain elements with nonamenable centralizer.

\paragraph*{Proper proximality among $\cat$ groups.} We recall that an isometry of a $\cat$ space $X$ is \emph{rank one} if it has an invariant axis that does not bound any half-space of $X$. An isometric action of a group $G$ on a $\cat$ space $X$ is \emph{nonelementary} if $G$ does not fix any point in $X$ and does not have any finite orbit in the visual boundary $\partial_\infty X$. Our first main result is the following.

\begin{theointro}\label{theointro:cat}
Let $G$ be a countable group acting properly nonelementarily by isometries on a proper $\cat$ space with a rank one element. Then $G$ is properly proximal.
\end{theointro}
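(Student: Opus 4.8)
The plan is to verify the geometric criterion for proper proximality established in Section~\ref{sec:proper-proximality}: it suffices to produce a compact metrizable $G$-space $K$ carrying no $G$-invariant probability measure, together with a diffuse probability measure $\eta$ on $K$ such that $\lim_{g\to\omega}\bigl((gh)\cdot\eta-g\cdot\eta\bigr)=0$ in the weak-$*$ topology for every $h\in G$ and every nonprincipal ultrafilter $\omega$ on $G$. I take $K=\partial_\infty X$, which is compact and metrizable since $X$ is proper and separable, and I take $\eta$ to be the harmonic (hitting) measure on $\partial_\infty X$ for a symmetric, finitely supported generating probability measure $\mu$ on $G$. By properness of the action together with nonelementarity and the presence of a rank one element, $\mu^{\otimes\NN}$-almost every sample path converges in $\overline X=X\sqcup\partial_\infty X$ to a boundary point, and moreover $\eta$-almost every such point is \emph{contracting} (a rank one boundary point); in particular $\eta$ gives zero mass to the visual boundary of every flat in $X$.

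Two of the required properties are then routine. The measure $\eta$ is diffuse: if it had an atom, $\mu$-stationarity would force the nonempty finite set of atoms of maximal $\eta$-mass to be $G$-invariant, since $\mathrm{supp}\,\mu$ permutes it and generates $G$; this would give a finite $G$-orbit in $\partial_\infty X$, contradicting nonelementarity. And $\partial_\infty X$ carries no $G$-invariant probability measure: a rank one element $\gamma\in G$ has weak north--south dynamics on $\partial_\infty X$, so any $\gamma$-invariant probability measure is supported on the two-point set $\{\gamma^+,\gamma^-\}$ of endpoints of an axis of $\gamma$, whence a $G$-invariant one would be supported on a finite set, again contradicting nonelementarity. (One may also note, by ping-pong on rank one axes, that $G$ is nonamenable, though this is not needed for the criterion.)

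The core of the argument is the asymptotic invariance condition, which is where I expect the main difficulty. Write $\sigma_h:=h\cdot\eta-\eta$, a signed measure of total mass $0$; the condition reads $\lim_{g\to\omega}g\cdot\sigma_h=0$ weak-$*$. Since $\mathrm{supp}\,\mu$ is finite, $\eta$ is quasi-invariant with $\tfrac{d(h\cdot\eta)}{d\eta}\in L^\infty(\eta)$ for every $h\in G$, so, unwinding the weak-$*$ condition for a fixed $f\in C(\partial_\infty X)$, it suffices to show that $f\circ g$ converges to a constant in $L^1(\eta)$ along $\omega$, equivalently that $g\cdot\eta$ converges along $\omega$ to a Dirac mass. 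Along $\omega$ one has $g\cdot o\to\xi^+$ and $g^{-1}\cdot o\to\xi^-$ in $\partial_\infty X$, and the statement to prove is the ``measured north--south'' behaviour $g\cdot\eta\to\delta_{\xi^+}$ along $\omega$, i.e. $g\cdot\xi\to\xi^+$ along $\omega$ for $\eta$-almost every $\xi$, uniformly enough to pass to the level of measures. The point is that for $\eta$-almost every $\xi$ the geodesic ray from $o$ to $\xi$ is contracting, so its ``essential part'' is dragged by $g$ out to infinity in the direction $\xi^+=\lim g\cdot o$, forcing the ray from $o$ to $g\cdot\xi$ to point toward $\xi^+$; the only exceptional $\xi$ are those lying in a shrinking neighbourhood of the repelling direction $\xi^-$, a set of $\eta$-measure tending to $0$ by regularity and diffuseness of $\eta$, while the uniformity needed comes from compactness together with the contraction estimates for Morse rays. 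This step uses both the rank one hypothesis (to have contracting rays at all) and the concentration of $\eta$ on contracting directions --- the analogous statement is false over a flat, where translations act trivially at infinity. Applying the same reasoning to the diffuse measure $h\cdot\eta$ yields $(gh)\cdot\eta\to\delta_{\xi^+}$ along $\omega$ as well, with the same limit since $d(gh\cdot o,\,g\cdot o)=d(h\cdot o,o)$ is bounded; subtracting gives $(gh)\cdot\eta-g\cdot\eta\to0$.

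With $K=\partial_\infty X$ and $\eta$ as above, all hypotheses of the criterion in Section~\ref{sec:proper-proximality} are satisfied, hence $G$ is properly proximal. The only genuinely non-formal ingredient is the uniform contraction $g\cdot\eta\to\delta_{\xi^+}$ along an arbitrary ultrafilter; it is there that properness, nonelementarity and rank one curvature all enter, and carrying it out with only the contracting behaviour supplied by rank one geodesics --- valid merely on a set of full $\eta$-measure, in the absence of convergence-group dynamics --- is the heart of the matter.
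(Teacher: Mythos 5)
Your overall architecture is the right one (compactify by $\partial_\infty X$, find a diffuse measure concentrated on ``contracting'' directions, prove measured north--south dynamics), but the two steps you yourself flag as the heart of the matter are exactly the ones that are not proved, and neither is routine. First, the existence of your measure $\eta$: you take the hitting measure of a random walk and assert that almost every sample path converges in $\overline{X}$ to a rank one boundary point. For a general proper $\cat$ space with a nonelementary action containing a rank one element this is a substantial theorem in its own right (and it is not obviously available in the literature at the level of generality needed; it also presupposes $G$ finitely generated as you set it up). The paper sidesteps random walks entirely: it produces a ping-pong free subgroup on two independent rank one elements (via Hamenst\"adt), uses a theorem of Yang to quasi-isometrically embed an infinitely-ended tree with uniformly contracting rays, and via Cordes' Morse boundary machinery obtains a topologically embedded Cantor set of Morse rays in $\partial_\infty X$; \emph{any} diffuse measure on this Cantor set (pushed around by $G$) then works. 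This is much cheaper than stationarity.

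Second, and more seriously, the dynamical statement $g\cdot\eta\to\delta_{\xi^+}$ is not established. Your sketch (``the essential part of a contracting ray is dragged toward $\xi^+$; the exceptional $\xi$ form a shrinking neighbourhood of $\xi^-$ of small $\eta$-measure; uniformity comes from compactness and contraction estimates'') conflates an almost-everywhere qualitative property (being contracting, with no uniform constant) with a uniform quantitative one, and the ``shrinking neighbourhood of small measure'' claim is essentially a restatement of what must be proved. The paper's resolution is a theorem of Papasoglu and Swenson: after passing to a subsequence there are $\xi^-,\xi^+$ such that \emph{every} visibility point other than the single point $\xi^-$ converges to $\xi^+$; the key observation making this applicable is that a visibility point is at Tits distance $>\pi$ from every other boundary point (since a geodesic joining them cannot bound a half-plane), so the $\pi$-ball around the repelling point contains at most one visibility point. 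Morse rays are visibility points by Charney--Sultan, the exceptional set is a single point of measure zero for every translate $h\eta$ of the diffuse measure, and the passage from pointwise convergence to weak-$*$ convergence of measures is a soft lemma. Without an argument of this kind (or a genuine proof of your random-walk input plus a measured contraction estimate), your proposal does not yet constitute a proof.
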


In fact, we also prove the following more general version.

\begin{theointro}\label{theointro:cat-2}
Let $k\in\mathbb{N}$, and for every $i\in\{1,\dots,k\}$, let $X_i$ be a proper $\cat$ space, and let $G_i$ be a group acting by isometries on $X_i$. Let $G$ be a countable subgroup of $G_1\times\dots\times G_k$ which acts properly on $X_1\times\dots\times X_k$. Assume that for every $i\in\{1,\dots,k\}$, the projection $\pi_i(G)$ acts nonelementarily on $X_i$ with a rank one element. 

Then $G$ is properly proximal.
\end{theointro}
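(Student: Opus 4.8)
The strategy is to reduce everything to a dynamical/geometric criterion for proper proximality — the one alluded to under the heading \emph{A word on the proofs} — which asks for a compact $G$-space carrying no invariant probability measure, on which $G$ acts with a suitable ``north-south'' type contraction along ultrafilters. The natural candidate here is built from the visual boundaries $\partial_\infty X_i$. For a single factor with a rank one isometry, the action of $\pi_i(G)$ on $\partial_\infty X_i$ is a convergence-type action on a compact space, and nonelementarity rules out invariant measures on the relevant limit set; this is the classical picture that already underlies the Boutonnet--Ioana--Peterson examples of convergence groups. The point of the proof is to (a) make this precise using rank one geometry (Ballmann--Buyalo, contracting geodesics, the limit set $\Lambda_i \subseteq \partial_\infty X_i$ of $\pi_i(G)$), and (b) handle the product $X_1 \times \dots \times X_k$ and the fact that $G$ is only a subgroup of $G_1 \times \dots \times G_k$, not the full product.

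**Key steps, in order.** First I would record the boundary dynamics for one rank one factor: if $\pi_i(G) \curvearrowright X_i$ is nonelementary with a rank one element, then the limit set $\Lambda_i$ is a nonempty, perfect, minimal $\pi_i(G)$-invariant compact subset of $\partial_\infty X_i$, and $\pi_i(G)$ acts on it as a ``convergence-like'' group: along any nonprincipal ultrafilter $\omega$, a sequence $g_n \to \omega$ in $\pi_i(G)$ either stays in a compact set or there are attracting/repelling points $\xi^+_\omega, \xi^-_\omega \in \Lambda_i$ with $g_n \to \xi^+_\omega$ uniformly on compact subsets of $\Lambda_i \setminus \{\xi^-_\omega\}$. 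This uses properness of $X_i$ together with the contracting property of rank one axes (Morse lemma for contracting geodesics). Minimality and perfectness of $\Lambda_i$, combined with nonelementarity, give that $\Lambda_i$ carries no $\pi_i(G)$-invariant probability measure. Second, I would transport this to $G$ via the projection $\pi_i \colon G \to \pi_i(G)$: $G$ acts on each $\Lambda_i$, hence on $K := \Lambda_1 \times \dots \times \Lambda_k$; I claim $K$ (or a suitable $G$-invariant closed subset of it, e.g.\ a ``diagonal-type'' limit set) carries no $G$-invariant probability measure — pushing forward to one coordinate $\Lambda_i$ would contradict the previous step, since $\pi_i(G)$ acting nonelementarily on $\Lambda_i$ admits no invariant measure. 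Third, for the contraction-along-ultrafilters condition in the definition of proper proximality, I would fix a nonprincipal ultrafilter $\omega$ on $G$ and $h \in G$; in each coordinate the images $\pi_i(g) \to \pi_i(\omega)$ either converge to an attracting point $\xi^+_{i}$ or remain bounded, and I would build a measure $\eta$ on $K$ (a product of diffuse measures supported on $\Lambda_i$, e.g.\ a Patterson--Sullivan measure in each factor, or a pushforward of a diffuse measure on $G \cup K$) for which $g \cdot \eta$ concentrates, along $\omega$, at the point $(\xi^+_1, \dots, \xi^+_k)$ in those coordinates where escape happens and is asymptotically $\pi_i(h)$-invariant in the bounded coordinates; in all cases $(gh)\cdot\eta - g\cdot\eta \to 0$ weak-$*$. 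The properness of the $G$-action on the product is what guarantees that at least one coordinate escapes, so $\eta$ is genuinely moved and $K$ has no invariant measure — avoiding the degenerate case.

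**Main obstacle.** The delicate point is the interaction between the factors under the hypothesis that $G \leq G_1 \times \dots \times G_k$ is merely a subgroup acting properly on the product: a sequence $g_n \to \omega$ escaping to infinity in $G$ need not escape in every coordinate, so the ``attracting point'' lives only in a sub-product of the $\Lambda_i$'s, and I must ensure (i) that the space $K$ I use still carries no $G$-invariant probability measure despite this, and (ii) that the weak-$*$ convergence $(gh)\cdot\eta - g\cdot\eta \to 0$ holds uniformly over the mixed behaviour. I expect to resolve (i) by the pushforward-to-one-coordinate argument (properness forces some coordinate to be unbounded, and in that coordinate nonelementarity kills invariant measures), and (ii) by choosing $\eta$ as a product measure and checking the two regimes coordinatewise — escape (handled by north–south dynamics of contracting isometries) versus boundedness (handled by continuity/equicontinuity of the $\pi_i(h)$-action on compact pieces). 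A secondary technical point is proving the full convergence-group-like dynamics on $\Lambda_i$ for a general proper $\cat$ space with a rank one element rather than a CAT($-1$) or hyperbolic space; this is where the contracting-geodesic machinery (Bestvina--Fujiwara, Charney--Sultan, Cashen) does the work, and I would cite or adapt the relevant statements rather than reprove them.
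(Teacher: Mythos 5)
Your overall architecture (boundary dynamics in each factor, no invariant measure via pushforward to a coordinate, properness forcing escape in some coordinate) is the right shape, and it matches the paper's reduction of Theorem~\ref{theointro:cat-2} to a single-factor statement plus a product lemma. But two of your key steps fail as stated. First, the claimed convergence-group-like dynamics on the \emph{full} limit set $\Lambda_i$ is false for general rank one $\cat$ actions: nonelementarity plus a rank one element does not exclude flats, and if $\pi_i(G)$ contains a $\mathbb{Z}^2$ stabilizing a flat whose boundary circle lies in $\Lambda_i$ (e.g.\ many nonelementary cubical groups), then a sequence escaping inside that $\mathbb{Z}^2$ does not contract $\Lambda_i\setminus\{\xi^-\}$ to a point. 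The contraction one actually gets (from Papasoglu--Swenson) is only valid on the set of \emph{visibility} points of $\partial_\infty X_i$, i.e.\ points at Tits distance $>\pi$ from everything else; the paper therefore takes $K=\partial_\infty X_i$ but supports the diffuse measure on a Cantor set of Morse rays, built by extracting a \emph{properly acting} free subgroup (this extraction is itself a lemma, since the $G_i$-actions are not assumed proper, so Patterson--Sullivan theory is unavailable) and quasi-isometrically embedding a tree. A diffuse measure merely "supported on $\Lambda_i$" may charge boundaries of flats, where the needed convergence is simply not true.

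Second, the product space $K=\Lambda_1\times\dots\times\Lambda_k$ with a product measure $\eta=\eta_1\otimes\dots\otimes\eta_k$ does not work for sequences with mixed behaviour, and your proposed fix (equicontinuity in the bounded coordinates) cannot repair it. Concretely, take $k=2$, a sequence $(g_n)$ with $\pi_1(g_n)$ escaping and $\pi_2(g_n)=e$ for all $n$, and $h$ with $\pi_2(h)=s\neq e$; then testing against $f_1\otimes f_2$ gives $\langle (g_nh)\eta-g_n\eta, f_1\otimes f_2\rangle\to f_1(\xi_1^+)\bigl(\langle s\eta_2,f_2\rangle-\langle\eta_2,f_2\rangle\bigr)$, which is nonzero in general: boundedness of a coordinate gives no asymptotic $h$-invariance there. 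This is exactly why the Boutonnet--Ioana--Peterson definition (and the paper's Definition~\ref{de:proximal-at-infinity}) allows \emph{finitely many} compact spaces rather than one: the correct move is to use the family $\{\partial_\infty X_1,\dots,\partial_\infty X_k\}$ (each a $G$-space via $\pi_i$), note that properness of the $G$-action on the product forces some coordinate $i$ to escape along a subsequence, and verify the proximality condition for that single $K_i$ only, ignoring the bounded coordinates entirely. With those two repairs your plan becomes the paper's proof.
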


Note that proper $\cat$ spaces often admit a canonical decomposition as a product where the factors are either Euclidean or irreducible \cite[Theorem~5.1]{CM}, and the above theorem applies to such decompositions.

Notice also that we do not impose the properness of the $G_i$-action on $X_i$ in the above statement, but we only require the properness of the $G$-action on the product. It applies for example to some groups acting on non-affine buildings \cite{CapraceFujiwara} (in particular, Kac--Moody groups on finite fields are a class of infinite simple groups with property (T) satisfying this property \cite{CapraceRemy}). 

Combining Theorem~\ref{theointro:cat-2} with work of Caprace and Sageev \cite{CS} -- stating roughly that every proper group action on a proper finite-dimensional $\cat$ cube complex has a rank one element unless the space decomposes as a product, we reach the following corollary.

\begin{corintro}\label{corintro:ccc}
Let $G$ be a countable group acting properly nonelementarily by cubical automorphisms on a proper finite-dimensional $\cat$ cube complex. Then $G$ is properly proximal.
\end{corintro}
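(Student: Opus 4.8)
The plan is to reduce, via the structure theory of Caprace and Sageev \cite{CS}, to a situation in which Theorem~\ref{theointro:cat-2} applies directly; the geometric heart of the statement is already contained in that theorem and in \cite{CS}, and what remains is bookkeeping. First I would replace $X$ by the essential core of the $G$-action: by \cite{CS} this is a $G$-invariant convex subcomplex $Y\subseteq X$ on which $G$ acts essentially, and it is nonempty because the action is nonelementary and therefore has no global fixed point. Since $Y$ is convex in the proper finite-dimensional complex $X$, it is itself proper and finite-dimensional, the $G$-action on $Y$ is again proper, and the inclusion $\partial_\infty Y\hookrightarrow\partial_\infty X$ is $G$-equivariant; hence the $G$-action on $Y$ is still nonelementary and, in particular, fixes no point of $\partial_\infty Y$.

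Next I would split $Y$ into its (finitely many, as $Y$ is finite-dimensional) irreducible factors $Y=Y_1\times\dots\times Y_k$, each $Y_i$ being unbounded and irreducible. The group $G$ permutes the factors, so the kernel $G_0$ of the resulting $G$-action on $\{Y_1,\dots,Y_k\}$ has finite index in $G$ and preserves each $Y_i$. For every $i$, the action of $\pi_i(G_0)$ on $Y_i$ is essential — essentiality passes to the factors of an essential product action and to finite-index subgroups — and has no finite orbit in $\partial_\infty Y_i$: viewing $\partial_\infty Y_i$ as the ``pure $Y_i$-direction'' sphere inside the spherical join $\partial_\infty Y=\partial_\infty Y_1\ast\dots\ast\partial_\infty Y_k$, such an orbit would yield a finite $G_0$-orbit, hence a finite $G$-orbit, in $\partial_\infty Y\hookrightarrow\partial_\infty X$, contradicting nonelementarity. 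Since $Y_i$ is irreducible, the rank rigidity theorem of \cite{CS} then forces $\pi_i(G_0)$ to contain a rank one isometry of $Y_i$; as such an isometry fixes no point of $Y_i$, the group $\pi_i(G_0)$ fixes no point of $Y_i$ either, so $\pi_i(G_0)\curvearrowright Y_i$ is nonelementary with a rank one element.

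Finally I would apply Theorem~\ref{theointro:cat-2}. Let $N$ be the kernel of the $G_0$-action on $Y$; it is finite because that action is proper. Then $\bar G:=G_0/N$ acts faithfully and properly on $Y=Y_1\times\dots\times Y_k$ preserving the decomposition, so $\bar G$ embeds into $\Isom(Y_1)\times\dots\times\Isom(Y_k)$, the factors $Y_i$ are proper $\cat$ spaces, and each projection $\pi_i(\bar G)=\pi_i(G_0)$ acts nonelementarily on $Y_i$ with a rank one element. Theorem~\ref{theointro:cat-2} gives that $\bar G$ is properly proximal; since $\bar G$ is obtained from the finite-index subgroup $G_0\le G$ by dividing out the finite normal subgroup $N$, the proper proximality of $G$ follows from its invariance under commensurability up to finite kernels \cite{BIP}. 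The step requiring the most care is this reduction — verifying that properness, finite-dimensionality, and above all nonelementarity are preserved when passing to the essential core and then descend to each irreducible factor after passing to $G_0$; each verification is routine, but all of them are needed.
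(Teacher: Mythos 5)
Your proof is correct and follows essentially the same route as the paper: pass to the Caprace--Sageev essential core, decompose it into irreducible factors preserved by a finite-index subgroup, invoke rank rigidity to produce a rank one element in each factor, and conclude via the product statement. The only (harmless) divergence is your quotient by the finite kernel $N$ so that Theorem~\ref{theointro:cat-2} applies verbatim; the paper sidesteps this by applying Theorem~\ref{theo:rank-one} factorwise together with Lemmas~\ref{lemma:product} and~\ref{lemma:proximality}, since proper proximality of an \emph{action} does not require faithfulness.
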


Our second main theorem in the $\cat$ setting is the following, where we recall that an action of a group $G$ on a $\cat$ space $X$ is \emph{minimal} if $X$ does not contain any strict nonempty $G$-invariant convex subspace.

\begin{theointro}\label{theointro:building}
Let $G$ be a countable group acting properly, minimally, nonelementarily by isometries on a locally finite thick affine building. Then $G$ is properly proximal.
\end{theointro}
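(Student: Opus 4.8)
The plan is to apply the criterion for proper proximality established in Section~\ref{sec:proper-proximality}, in the formulation involving finitely many compact $G$-spaces. Recall that this criterion reduces the problem to producing finitely many compact metrizable $G$-spaces $K_1,\dots,K_m$, none of which carries a $G$-invariant probability measure, together with diffuse probability measures $\eta_j$ on the $K_j$, such that for every nonprincipal ultrafilter $\omega$ on $G$ there is an index $j=j(\omega)$ with $(gh)\cdot\eta_j-g\cdot\eta_j\to 0$ in the weak-$*$ topology along $\omega$, for every $h\in G$. The spaces I would use are the \emph{flag varieties at infinity} of the building: the spherical building $\partial_\infty X$ at infinity has only finitely many types of simplices (as $X$ has finite rank), and for each type $\tau$ I let $K_\tau$ be the set of type-$\tau$ simplices of $\partial_\infty X$, with its natural topology; these are the building-theoretic analogues of partial flag varieties. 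Since $X$ is locally finite, each $K_\tau$ is compact, metrizable and totally disconnected, and $G$ acts on it by homeomorphisms.

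The measures are comparatively easy to put in place. Since $G$ acts minimally and nonelementarily and $X$, being a thick affine building, has no Euclidean factor, $G$ has no invariant probability measure on $\partial_\infty X$ --- this is a geometric density statement in the style of Caprace--Monod and Caprace--Lytchak --- and hence, pushing forward via barycenters, no invariant probability measure on any $K_\tau$. For $\eta_\tau$ I would take a stationary measure for a probability measure of full support on $G$: any atom of maximal mass would lie in a finite $G$-orbit, contradicting nonelementarity, so $\eta_\tau$ is diffuse, and by a Furstenberg-type argument, using thickness, it gives zero mass to every proper Schubert cell of $K_\tau$ --- a non-degeneracy property that will be used repeatedly.

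It remains to verify the drift-absorption condition, which is the technical core. Fix a basepoint $o\in X$ and a nonprincipal ultrafilter $\omega$ on $G$; since the $G$-action is proper, $\zeta_\omega:=\lim_{g\to\omega}g\cdot o$ lies in $\partial_\infty X$, in the interior of a unique simplex $F_\omega$, of some type $\tau=\tau(\omega)$. A routine ultrafilter-to-sequence argument then reduces the conclusion for $K_\tau$ to the following contraction lemma: if $(g_n)$ is a sequence of isometries of $X$ with $d(o,g_n\cdot o)\to\infty$ and $g_n\cdot o$ converging to a point lying in the interior of a type-$\tau$ simplex of $\partial_\infty X$, then, after passing to a subsequence, there is a proper Schubert cell $C\subseteq K_\tau$ such that $g_n\sigma$ converges to one and the same simplex $F_\infty\in K_\tau$ for every $\sigma\in K_\tau\setminus C$. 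Granting this, non-degeneracy gives $\eta_\tau(C)=\eta_\tau(h^{-1}C)=0$ for every $h\in G$, so $g_n\sigma$ and $g_n(h\sigma)$ both converge to $F_\infty$ for $\eta_\tau$-almost every $\sigma$; dominated convergence then yields $(g_nh)\cdot\eta_\tau-g_n\cdot\eta_\tau\to 0$, which is what is needed. The main obstacle --- the step that I expect to absorb the bulk of the work --- is establishing the contraction lemma in this generality. For Bruhat--Tits buildings it is the classical dynamics of a sequence going to infinity in the ambient semisimple group acting on a partial flag variety, read off from the Cartan and Bruhat decompositions; but for arbitrary locally finite thick affine buildings --- including exotic ones, and for arbitrary groups acting on them --- there is no ambient algebraic group, and the contraction has to be extracted directly from the $\cat$ and combinatorial geometry of $X$: via retractions onto apartments, combinatorial projections onto residues, and the geometry of sectors. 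The essential point is that the displacement of $g_n$ contracts exactly the directions transverse to the residue attached to the limiting direction, leaving that residue undisturbed --- which is why one must pass to the partial flag variety $K_\tau$ rather than work with the full chamber set at infinity, on which the drift can genuinely fail. Establishing this contraction statement uniformly, together with the non-degeneracy of the stationary measures $\eta_\tau$, is the heart of the argument.
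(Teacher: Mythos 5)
Your architecture coincides with the paper's: the compact spaces are the sets $\Delta_I$ of type-$I$ simplices of the spherical building at infinity, the measures are required to give zero mass to the locus of simplices not opposite to a fixed one, the absence of invariant probability measures comes from a geometric density argument (the paper uses Adams--Ballmann), and the final passage from almost-everywhere attraction to weak-$*$ convergence of translated measures is exactly the paper's Lemma~\ref{lemma:criterion-proximality2} via quasi-invariance. So the plan is aimed correctly.

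However, the two steps you yourself single out as ``the heart of the argument'' are exactly the ones the proposal does not carry out, and they are where essentially all the work lies. First, the contraction lemma. The paper does not extract it from scratch: it applies Papasoglu--Swenson \cite[Theorem~4]{PS} to the $\cat$ space $X$ to produce, after passing to a subsequence, points $p,m\in\partial_\infty X$ such that every boundary point at Tits distance more than $\pi-\theta$ from $m$ is attracted into $B_T(p,\theta)$; it then observes that a chamber opposite to a chamber containing $m$ contains a point at Tits distance exactly $\pi$ from $m$, and uses Lemma~\ref{lem:visualvscomb} (the parametrization $R_I(\theta,\cdot)$ is a homeomorphism onto a closed image) to convert visual convergence of interior points into convergence in $\Delta_I$. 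None of this is in your proposal; ``extract the contraction from retractions, projections and sectors'' is a statement of intent, not an argument. Second, the measure. The paper does not use a stationary measure: it uses Parkinson's explicit measure $\mu_o$ with $\mu_o(\Omega_o(z))=1/N_{\sigma(o,z)}$, whose quasi-invariance is immediate from $g_*\mu_o=\mu_{go}$, and proves genericity of opposition (Proposition~\ref{prop:opposition-measure}) by a combinatorial counting argument (Lemmas~\ref{lem:nulambda} and~\ref{lem:mememesure}) resting on the \emph{regularity} of the building --- which fails when $X$ has a tree factor, forcing the paper to split off tree factors and treat them via the cube-complex theorem, a case your proposal does not address at all. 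Your alternative --- that a stationary measure for a full-support random walk on an arbitrary countable group acting on an arbitrary (possibly exotic) affine building gives zero mass to every proper Schubert cell ``by a Furstenberg-type argument, using thickness'' --- is classical for algebraic groups over local fields but has no off-the-shelf proof in this generality; if you take that route you must supply the argument, and it is not obviously easier than the paper's counting. As it stands the proposal is a correct and well-aimed outline, but the theorem is not yet proved.
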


 We note that affine buildings of higher rank are almost all classified by the famous work of Tits (see \cite{Weiss} for the full classification). Namely, if the dimension of the building is at least 3, then the building comes from an explicitely described algebraic construction, involving some non-archimedean field. In most cases this will imply that the isometry group of the building is a semisimple algebraic group over some local field, and therefore Theorem \ref{theointro:building} follows from the work of \cite{BIP}. However there are some cases when this is not the case: even in higher dimension, there are some constructions which are associated to groups which are not algebraic (e.g.\ involving vector spaces of infinite dimension). And more interestingly, the affine buildings of dimension 2 do not admit such a classification, and there are now some exotic constructions (see for example \cite{Tits_Andrews,Ronan_triangle,CMSZ2,Witzel}) which admit non-linear automorphism groups \cite{BCL}. 

A natural question that arises from our work is whether proper proximality holds for more general classes of groups with proper actions on $\mathrm{CAT}(0)$ spaces. The Rank Rigidity Conjecture predicts that if $G$ is a group acting properly and cocompactly on a proper geodesically complete CAT(0) space $X$, then either $G$ contains a rank one element, or $X$ splits as a product, or $X$ is a building or a symmetric space. In fact, the Rank Rigidity Conjecture is known to hold for $2$-dimensional piecewise Euclidean complexes \cite{BB} and for Hadamard manifolds \cite{Bal2,BS}, and in these situations this can be used to prove the following corollary.

\begin{corintro}\label{corintro:dim-2}
Let $G$ be a countable group acting properly cocompactly nonelementarily by isometries on either
\begin{enumerate}
\item a simply connected complete Riemannian manifold of nonpositive sectional curvature with no Euclidean factor, or
\item a $2$-dimensional piecewise Euclidean $\cat$ simplicial complex. 
\end{enumerate}
Then $G$ is properly proximal.
\end{corintro}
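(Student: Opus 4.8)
The plan is to deduce Corollary~\ref{corintro:dim-2} from Theorem~\ref{theointro:cat-2} by invoking the Rank Rigidity results available in each of the two settings. The strategy in both cases is the same: given the proper cocompact nonelementary action of $G$ on the $\cat$ space $X$, first pass to a canonical product decomposition of $X$, then check that each irreducible factor either carries a rank one element for the relevant projection of $G$ or else can be handled directly; finally, assemble the conclusion via Theorem~\ref{theointro:cat-2}.

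First I would reduce to the case where the action is minimal and $X$ has no Euclidean factor. Since $G$ acts cocompactly, there is a canonical minimal nonempty $G$-invariant convex subspace $X_0 \subseteq X$ (by the standard theory of cocompact actions on proper $\cat$ spaces, one can extract a minimal closed convex invariant set, and cocompactness forces $X_0$ to be within bounded Hausdorff distance of $X$), and the $G$-action on $X_0$ remains proper, cocompact and nonelementary. By \cite[Theorem~5.1]{CM}, $X_0$ decomposes canonically as $X_0 \cong E \times Y_1 \times \dots \times Y_k$ where $E$ is a Euclidean factor and each $Y_i$ is irreducible (and nonflat); a finite-index subgroup $G'$ of $G$ respects this decomposition and acts on each factor, with the projection $\pi_i(G')$ acting minimally and cocompactly on $Y_i$. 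In case (1), $X$ is a Hadamard manifold, so each $Y_i$ is itself a Hadamard manifold, and the hypothesis ``no Euclidean factor'' together with nonelementarity of the $G$-action guarantees $k \ge 1$; in case (2), the $Y_i$ are again $2$-dimensional piecewise Euclidean $\cat$ simplicial complexes (being convex subcomplexes, up to subdivision), and again $k\ge 1$. It suffices to prove proper proximality of $G'$, since proper proximality passes to finite-index overgroups (equivalently, $G'$ properly proximal and finite index in $G$ implies $G$ properly proximal); this reduction should be recalled from Section~\ref{sec:proper-proximality}.

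Next I would apply Rank Rigidity to each irreducible factor $Y_i$. In case (1), Ballmann's Rank Rigidity theorem \cite{Bal2,BS} states that an irreducible Hadamard manifold admitting an isometric action with a cocompact (more precisely, a non-elementary finite-covolume-type) quotient is either of higher rank — hence a symmetric space of noncompact type of rank $\ge 2$ or a Euclidean building — or else admits a rank one geodesic, and an action as above then contains a rank one isometry. If $Y_i$ is a higher rank symmetric space, then the closure of $\pi_i(G')$ in $\Isom(Y_i)$ is a semisimple Lie group without compact factors acting cocompactly, so $\pi_i(G')$ is (virtually) a lattice and proper proximality in that factor follows from \cite{BIP}; but to keep the argument uniform one observes that a Hadamard manifold factor in a cocompact setting cannot be a higher-rank symmetric space unless we are outside the hypotheses, or one simply treats the symmetric/building factors separately and combines. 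In case (2), the Ballmann--Brin Rank Rigidity theorem for $2$-dimensional piecewise Euclidean complexes \cite{BB} says that an irreducible such complex $Y_i$ with a cocompact isometry group either has a rank one isometry in $\pi_i(G')$ or is a Euclidean building, in which case it is a thick (after restricting to the minimal complex) locally finite affine building of rank $2$, and Theorem~\ref{theointro:building} applies to the $\pi_i(G')$-action on $Y_i$ provided that action is still proper, minimal and nonelementary (properness of $\pi_i(G')$ on $Y_i$ follows from properness of $G'$ on the product together with cocompactness; minimality we arranged; nonelementarity follows from irreducibility and noncompactness of $Y_i$). So for each $i$, the factor $Y_i$ is one of: a higher-rank symmetric space (handled by \cite{BIP}), a thick affine building (handled by Theorem~\ref{theointro:building}), or a space with a rank one element for $\pi_i(G')$.

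Finally I would combine the pieces. Group the indices $i$ into those three types. For the collection of rank one factors, Theorem~\ref{theointro:cat-2} directly yields proper proximality of the image of $G'$ in the product of those $\Isom(Y_i)$, acting properly on the product of those $Y_i$; note we can ignore the symmetric-space and building factors and the Euclidean factor $E$ since properness of $G'$ on $X_0$ still gives properness of $G'$ on the product of just the rank-one $Y_i$'s only if that sub-product action is itself proper — which is not automatic. The cleaner route, and the one I would actually write, is to apply Theorem~\ref{theointro:cat-2} to the full list $X_1,\dots,X_k$ after replacing each building or symmetric-space factor's treatment by the observation that in the cocompact setting a higher-rank irreducible factor forces $\pi_i(G')$ to be a cocompact lattice in a simple Lie group, and then using the version of Theorem~\ref{theointro:cat-2} (or its proof) together with \cite{BIP}-style input on those factors — or, most simply, invoke the Rank Rigidity Conjecture in the forms \cite{BB} and \cite{Bal2,BS} which in these low-dimensional/manifold settings actually give a rank one element whenever $X_0$ is irreducible and not a building/symmetric space, so that after the reduction every irreducible factor either is a building, is a higher-rank symmetric space, or has a rank one element. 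The main obstacle, and the point requiring care, is exactly this bookkeeping across heterogeneous factors: ensuring the relevant sub-actions remain proper, handling the symmetric-space and building factors so that they slot into the hypotheses of Theorem~\ref{theointro:cat-2} (or into \cite{BIP}) without losing nonelementarity or properness, and verifying that the finite-index passage to $G'$ and back to $G$ is legitimate for proper proximality. Once that is organized, the corollary follows formally.
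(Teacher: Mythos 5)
Your overall strategy -- Rank Rigidity plus a case analysis over irreducible factors, assembled via the product and properness lemmas -- is the same as the paper's, but the proposal has a genuine gap exactly where you flag ``the main obstacle'': you never actually resolve the assembly across heterogeneous factors, and you offer three alternative routes without committing to one that works. The resolution in the paper is that the notion of a \emph{properly proximal action} (Definition~\ref{de:proximal-at-infinity}) does not require the factor actions $\pi_i(G)\actson Y_i$ to be proper. One proves that \emph{each} factor action is properly proximal as an action -- via Theorem~\ref{theo:rank-one} for rank one factors, and via the proof of \cite[Proposition~4.14]{BIP} (Zariski density of $\pi_i(G)$ in the isometry group, by Mostow/Borel density) for higher-rank symmetric space factors -- then applies Lemma~\ref{lemma:product} to get a properly proximal action of a finite-index subgroup $G^0$ on the \emph{full} product, and only then invokes properness of the $G^0$-action on the whole product through Lemma~\ref{lemma:proximality}. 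Your worry about ``properness of the sub-product action'' is therefore a red herring, but since you do not identify this mechanism, the proof as written does not close. Relatedly, for the rank one manifold factors you assert without justification that the action ``then contains a rank one isometry''; the paper derives this from the Chen--Eberlein duality condition (which holds for proper cocompact actions and passes to the factors) together with \cite[Theorem~3.4]{Bal}.

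Two further points on case (2). The paper does not decompose $X$ into irreducible factors there: it applies the Ballmann--Brin trichotomy \cite[Theorem~C]{BB} directly to $X$, getting either a rank one isometry (Theorem~\ref{theo:main}), or $X$ a product of two \emph{trees} -- handled as a $\cat$ cube complex via Theorem~\ref{theo:ccc} -- or $X$ a thick Euclidean building, where minimality comes from \cite[Lemma~3.13]{CM} and one applies Theorem~\ref{thm:buildings}. Your claim that the irreducible factors of a reducible $2$-dimensional complex are ``again $2$-dimensional piecewise Euclidean $\cat$ simplicial complexes'' is wrong on dimension grounds (they are trees), and your sketch omits both the cube-complex route for that case and the verification of minimality needed to invoke Theorem~\ref{theointro:building} in the building case. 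These are fixable, but as written the case analysis for (2) does not match the actual trichotomy.
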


\paragraph*{Mapping class groups and hierarchically hyperbolic groups.} Hierarchically hyperbolic spaces and groups were introduced by Behrstock, Hagen and Sisto in \cite{BHS1} (with a streamlined definition in \cite{BHS}), to provide a common axiomatic framework for studying $\cat$ cubical groups and mapping class groups of finite-type surfaces. This framework is inspired from the Masur--Minsky hierarchy machinery from the surface setting \cite{MM}. The class of hierarchically hyperbolic groups includes various other examples, such as many $3$-manifold groups \cite[Theorem~10.1]{BHS}. Our second main result is the following, for which we refer to the opening paragraph of Section~\ref{sec:hhs} for relevant definitions. 

\begin{theointro}\label{theointro:hhs}
Let $(\calx,\mathfrak{S})$ be a hierarchically hyperbolic space, with $\calx$ proper.
 Let $G$ be a countable group acting nonelementarily by HHS automorphisms on $(\calx,\mathfrak{S})$, so that the $G$-action on $\calx$ is proper, by uniform quasi-isometries, and has an almost equivariant principal projection. 

Then $G$ is properly proximal.
\end{theointro}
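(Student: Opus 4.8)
The plan is to verify the definition of proper proximality recalled in the introduction (equivalently, to apply the geometric criteria of Section~\ref{sec:proper-proximality}): one must exhibit a compact metrizable $G$-space $K$ carrying no $G$-invariant probability measure, a diffuse probability measure $\eta$ on $K$, and then check that $\lim_{g\to\omega}((gh)\cdot\eta-g\cdot\eta)=0$ in the weak-$*$ topology for every nonprincipal ultrafilter $\omega$ on $G$ and every $h\in G$. Two features of the hypotheses do the heavy lifting. First, $G$ acts properly on the \emph{proper} space $\calx$: this forces $g\cdot o$ (for a basepoint $o\in\calx$) to leave every bounded subset of $\calx$ as $g\to\omega$, hence to converge in a suitable compactification of $\calx$. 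Second, the $\sqsubseteq$-maximal domain $S$ carries a hyperbolic space $\mathcal{C}S$, the principal projection $\pi_S\colon\calx\to\mathcal{C}S$ is $G$-equivariant up to bounded error, and nonelementarity of the $G$-action on $(\calx,\mathfrak{S})$ entails in particular that the induced $G$-action on $\mathcal{C}S$ is nonelementary, so that $G$ contains two independent loxodromic isometries of $\mathcal{C}S$ with disjoint attracting/repelling pairs in $\partial_\infty\mathcal{C}S$. (That $G$ acts on $\calx$ only by uniform quasi-isometries is harmless: the boundary constructions below are coarse and are preserved by uniform quasi-isometries.)

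For $K$ I would take the hierarchically hyperbolic boundary $\partial(\calx,\mathfrak{S})$ of Durham, Hagen and Sisto (or, what amounts to the same, a horofunction-type compactification of $\calx$ built from the domain projections); since $\calx$ is proper and $\mathfrak{S}$ is countable, $\calx\cup\partial(\calx,\mathfrak{S})$ is compact and metrizable, so $K=\partial(\calx,\mathfrak{S})$ does the job (after passing, if necessary, to a minimal nonempty closed $G$-invariant subset). The Gromov boundary $\partial_\infty\mathcal{C}S$ embeds in $\partial(\calx,\mathfrak{S})$ as the stratum of boundary points of support $\{S\}$, and a standard north--south/ping-pong argument with high powers of the two independent loxodromics of $\mathcal{C}S$ shows that $K$ carries no $G$-invariant probability measure. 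For $\eta$, fix a probability measure $\mu$ on $G$ whose support generates $G$ and let $\eta$ be a $\mu$-stationary probability measure on $K$ (it exists by compactness of $\Prob(K)$ and the Markov--Kakutani fixed point theorem). Diffuseness follows from a Furstenberg-type argument: were $\eta$ to have an atom, the finite set of atoms of maximal $\eta$-mass would be $G$-invariant, contradicting nonelementarity (which rules out finite $G$-orbits in $K$, since a finite orbit with a proper domain in its support would produce a bounded $G$-orbit in $\mathcal{C}S$).

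Now fix $h\in G$ and a nonprincipal ultrafilter $\omega$. As noted, $g\cdot o$ leaves every bounded set as $g\to\omega$, hence converges to a point $\xi\in\partial(\calx,\mathfrak{S})$; since $G$ acts by uniform quasi-isometries, $d_\calx(g\cdot o,gh\cdot o)$ is uniformly bounded, and as $\partial(\calx,\mathfrak{S})$ is insensitive to bounded perturbations, $gh\cdot o$ converges along $\omega$ to the \emph{same} point $\xi$. It therefore suffices to show that $g\cdot\eta$ and $gh\cdot\eta$ have the same weak-$*$ limit along $\omega$. When $\pi_S(g\cdot o)$ is unbounded along $\omega$ it converges to some $\zeta\in\partial_\infty\mathcal{C}S$; by almost equivariance so does $\pi_S(gh\cdot o)$, and a contraction/north--south argument in the hyperbolic direction — of the same flavour as the proof that nonelementary convergence groups are properly proximal, and using that $\eta$ is diffuse so that the repelling direction is $\eta$-null — makes both $g\cdot\eta$ and $gh\cdot\eta$ converge along $\omega$ to one and the same measure.

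The genuinely delicate case, which I expect to be the main obstacle, is the complementary one: $g$ escapes in $\calx$ along $\omega$ while $\pi_S(g\cdot o)$ stays bounded, i.e.\ $g$ escapes ``orthogonally to $\mathcal{C}S$''. This really occurs — think of a net lying in a domain stabilizer and acting loxodromically on some $\mathcal{C}W$ with $W\sqsubsetneq S$ — and it is precisely the phenomenon responsible for hierarchically hyperbolic groups failing to be convergence groups, or bi-exact. Here the principal projection carries no information, and one must use the full hierarchically hyperbolic machinery — the distance formula, the consistency/realization theorems, orthogonality, and especially the finite complexity of $\mathfrak{S}$ — to show that along $\omega$ the net $g$ escapes within the sub-hierarchically-hyperbolic structure supported on a fixed $\sqsubsetneq$-proper domain (or a fixed finite orthogonal family of such domains), and to control the corresponding marginal of $\eta$ on that sub-boundary. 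I expect the cleanest organisation to be an induction on the complexity of $(\calx,\mathfrak{S})$, with the hyperbolic case (previous paragraph) as the base case; the technical heart is to verify that $\eta$ does not concentrate pathologically on any orthogonal stratum of $\partial(\calx,\mathfrak{S})$, so that the inductive step can be carried out uniformly along the ultrafilter.
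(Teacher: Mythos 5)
Your overall framing is right: you compactify with the Durham--Hagen--Sisto boundary, exploit the almost equivariant projection to the principal hyperbolic space $\calc S_0$, and you correctly isolate the crux, namely sequences that escape in $\calx$ while their $\pi_{S_0}$-image stays bounded (partial pseudo-Anosov-type behaviour). But that crux is exactly the point your proposal does not resolve: the final paragraph replaces a proof by a programme (``induction on complexity'', ``verify that $\eta$ does not concentrate pathologically on any orthogonal stratum''), and with your choice of $\eta$ --- a $\mu$-stationary measure on all of $\partial\calx$ --- this programme is genuinely hard. A stationary measure may charge strata supported on proper domains, and controlling $g\cdot\eta$ on those strata requires understanding the $G$-dynamics on the full boundary, for which no convergence-type statement is available; it is not clear the inductive scheme closes. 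So, as written, the argument has a gap precisely at the step you flag as the main obstacle.

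The paper closes this gap by a different choice of measure and a different dynamical statement. It takes $\eta$ to be any diffuse measure supported on the $G$-invariant subset $K^*=\partial_\infty\calc S_0\subseteq\partial\calx$ (which contains a Cantor set by nonelementarity of the action on $\calc S_0$), and proves (Proposition~\ref{prop:dynamics-boundary-hhs}) that every sequence escaping compacta has a subsequence, a point $\xi^-$, and an attracting point $\xi^+\in\partial\calx$ --- crucially allowed to lie in \emph{any} stratum, not just $\partial_\infty\calc S_0$ --- such that $g_{\sigma(n)}\xi\to\xi^+$ for all $\xi\in K^*\setminus\{\xi^-\}$. Since $h\eta$ is again diffuse and supported on $K^*$, Lemma~\ref{lemma:north-south} then yields proper proximality of the action, and Lemma~\ref{lemma:proximality} that of $G$. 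Your ``orthogonal escape'' case is exactly Case~2 of that proposition: there one shows, using only the bounded geodesic image axiom and the consistency inequalities, that for each domain $S$ in (or orthogonal to) the support of $\xi^+$, the boundary projection $\partial\pi_S(g_n\xi)$ stays at bounded distance from $\pi_S(g_n\tilde y)$ for a point $\tilde y$ chosen far out on a quasigeodesic from the basepoint towards $\xi$; no induction on complexity, no distance formula, and no control of stationary measures is needed. If you want to repair your write-up, the essential fix is to support $\eta$ on $\partial_\infty\calc S_0$ rather than taking it stationary on $\partial\calx$, and to prove convergence of $g_n\xi$ for $\xi$ in that set via boundary projections as above.
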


The nonelementarity assumption has to be understood in the following way: the action of $G$ on the main hyperbolic space coming from the hierarchical structure is nonelementary (i.e.\ contains two independent loxodromic isometries). The most crucial example is the following, see \cite[Theorem~11.1]{BHS}. Let $g,n\in\mathbb{N}$, and let $\Sigma$ be a surface obtained from a closed, connected, orientable surface of genus $g$ by removing $n$ points. Then the mapping class group $\Mod(\Sigma)$ is properly proximal, unless $\Sigma$ is a sphere with no more than three punctures in which case $\Mod(\Sigma)$ is finite. This answers a question asked by Boutonnet, Ioana and Peterson in \cite{BIP}.

More generally, we establish the following theorem, for which we recall that the  \emph{FC-center} of a group $H$ is defined as the subgroup of $H$ made of all elements that centralize a finite-index subgroup of $H$.

\begin{theointro}\label{theointro:mcg}
Let $g,n\in\mathbb{N}$, let $\Sigma$ be a connected oriented surface of genus $g$ with $n$ points removed. Then every subgroup of $\Mod(\Sigma)$ which acts nonelementarily (i.e.\ with two independent loxodromic isometries) on the curve graph $\calc(\Sigma)$ is properly proximal.

More generally, an infinite subgroup $H\subseteq\Mod(\Sigma)$ is properly proximal if and only if its  FC-center is finite. 
\end{theointro}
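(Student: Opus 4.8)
The plan is to prove Theorem~\ref{theointro:mcg} by combining Theorem~\ref{theointro:hhs} with the hierarchically hyperbolic structure on $\Mod(\Sigma)$, together with a careful analysis of the Nielsen--Thurston classification and of centralizers in mapping class groups. I treat the two assertions of the theorem in turn.

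\medskip

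\textbf{First statement (nonelementary action on $\calc(\Sigma)$ $\Rightarrow$ properly proximal).} The starting point is that $\Mod(\Sigma)$ acts on itself (with a word metric, or equivalently on its Cayley graph, which is proper since $\Mod(\Sigma)$ is finitely generated for the surfaces under consideration) and that this is a proper cobounded action which is an HHS action with index set $\mathfrak{S}$ the (countable) collection of isotopy classes of essential subsurfaces of $\Sigma$, the main hyperbolic space being the curve graph $\calc(\Sigma)$; this is \cite[Theorem~11.1]{BHS}. Restricting the $\Mod(\Sigma)$-action to a subgroup $H$ gives a proper action of $H$ on the HHS $(\Mod(\Sigma),\mathfrak{S})$ by HHS automorphisms (indeed by uniform quasi-isometries -- left multiplications are isometries). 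The nonelementarity hypothesis in Theorem~\ref{theointro:hhs} is precisely that the action on the main hyperbolic space is nonelementary, i.e.\ contains two independent loxodromics; for $\Mod(\Sigma)$ this is the action on $\calc(\Sigma)$, so it matches the hypothesis of the statement. The remaining point is to check that the restricted action has an almost equivariant principal projection; since the full $\Mod(\Sigma)$-action has an equivariant principal projection (it is the canonical HHS structure), restricting to $H$ keeps this property. Then Theorem~\ref{theointro:hhs} applies and gives that $H$ is properly proximal. One subtlety to address: a subgroup acting nonelementarily on $\calc(\Sigma)$ need not be finitely generated, so one should present the action of $H$ on the (locally compact, proper) space $\Mod(\Sigma)$ rather than on a Cayley graph of $H$ -- the hypotheses of Theorem~\ref{theointro:hhs} are about the action on $\calx$, not on $H$, so this is fine.

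\medskip

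\textbf{Second statement (equivalence with finiteness of the weak center).} For the ``if'' direction: suppose the weak center $Z$ of $H$ is finite. I want to produce a finite-index subgroup $H_0\le H$ that acts nonelementarily on some curve graph (of $\Sigma$ or of a subsurface) with trivial weak center, and then apply the first statement together with the fact (to be recalled) that proper proximality passes from a finite-index subgroup to the whole group (this is part of the basic permanence properties of proper proximality, cf.\ \cite{BIP}). The key structural input is the subsurface projection / Alexander method analysis: if $H$ does not act nonelementarily on $\calc(\Sigma)$ itself, then by the standard classification of subgroups of $\Mod(\Sigma)$ (via Ivanov, McCarthy--Papadopoulos, and the HHS machinery), either $H$ virtually fixes a point in $\partial_\infty\calc(\Sigma)$ (hence virtually preserves a lamination, forcing a large cyclic or abelian direction that contributes to the weak center -- contradiction with $|Z|<\infty$, after passing to finite index) or $H$ virtually fixes a multicurve $\gamma$, in which case $H$ virtually embeds (after passing to finite index and quotienting by the finite kernel supported on $\gamma$) into the mapping class group of the components of $\Sigma\setminus\gamma$; one then runs an induction on complexity. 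At the base of the induction, either $H$ becomes finite (then $Z$ finite is automatic and the statement is vacuous, since a finite group is properly proximal iff it is -- well, one needs the convention; finite groups are properly proximal trivially or one treats them as in \cite{BIP}) or one reaches a subsurface on which $H$ acts nonelementarily on the curve graph. The role of the finiteness of $Z$ is to rule out the ``reducible with Dehn twists about the multicurve contributing an infinite weak center'' scenario: if $H$ virtually preserves a curve $c$ and some power of the Dehn twist $T_c$ lies in $H$ and centralizes a finite-index subgroup, then that twist lies in $Z$, and the subgroup generated by such twists over the (finite) $H$-orbit of $c$ is infinite, contradicting $|Z|<\infty$.

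\medskip

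For the ``only if'' direction: suppose the weak center $Z$ of $H$ is infinite; I want to show $H$ is \emph{not} properly proximal. Since $Z$ is infinite and every element of $Z$ centralizes a finite-index subgroup of $H$, by an averaging/Zorn argument there is a finite-index subgroup $H_1\le H$ and an infinite subgroup $Z_1\le Z\cap H_1$ that is central in $H_1$; the center of $H_1$ is then infinite. But a group with infinite center is not properly proximal -- indeed proper proximality implies that the amenable radical is trivial, or more directly: an infinite central subgroup is amenable and its normalizer is everything, and the definition via the compact $G$-space $K$ with the $\lim_{g\to\omega}((gh)\cdot\eta - g\cdot\eta)=0$ condition forces, when applied along an ultrafilter supported on the infinite center, the measure $\eta$ to be invariant under the center; pushing this through one gets a $G$-invariant probability measure on $K$ after projecting to the quotient by the center's action, contradicting the defining property of $K$. (Cleanly: proper proximality is known to imply that $G$ has trivial amenable radical -- \cite{BIP} -- and an infinite center is contained in the amenable radical.) Then since $H_1$ has finite index in $H$ and $H_1$ is not properly proximal, neither is $H$, by the finite-index permanence again.

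\medskip

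\textbf{Main obstacle.} The delicate part is the ``if'' direction of the equivalence: organizing the induction on subsurface complexity so that at each stage the finiteness of the weak center is correctly propagated to the subgroup of $\Mod(\Sigma\setminus\gamma)$ one passes to, and so that the base case genuinely produces a nonelementary action on \emph{some} curve graph (rather than getting stuck with an infinite abelian/reducible group). In particular one must control the finite kernels appearing when restricting to subsurfaces (the ``point-pushing'' and boundary-twist subgroups) and verify that killing them does not turn an infinite weak center into a finite one in the wrong direction. The rest -- the application of Theorem~\ref{theointro:hhs}, the finite-index permanence, and the infinite-center obstruction -- is comparatively routine once the correct bookkeeping is in place.
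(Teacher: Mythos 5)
Your treatment of the first statement is correct and is exactly the paper's argument (restrict the HHS structure of \cite[Theorem~11.1]{BHS} on $\Mod(\Sigma)$ to the subgroup and apply Theorem~\ref{theointro:hhs}). The two directions of the second statement, however, each contain a genuine gap. For the ``only if'' direction, the reduction to a finite-index subgroup $H_1$ with infinite center is unjustified: each $z\in WZ(H)$ centralizes \emph{some} finite-index subgroup, but an infinite family of finite-index subgroups can intersect in an infinite-index subgroup, so no single finite-index $H_1$ need centralize an infinite piece of $WZ(H)$ (infinite FC-groups with trivial center already illustrate the obstruction). The paper avoids this entirely: every element of $WZ(H)$ has finite conjugacy class, so an infinite weak center yields an atomless conjugation-invariant mean on $H$, i.e.\ $H$ is inner amenable, and inner amenable groups are not properly proximal by \cite[Proposition~1.6]{BIP}. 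Your parenthetical amenable-radical idea can be repaired without the finite-index reduction -- $WZ(H)$ is a normal FC-subgroup, hence amenable, hence contained in the amenable radical -- but as written your argument routes through the false step.

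For the ``if'' direction, your outline follows the paper's strategy in spirit (Ivanov's classification, cutting along a virtually invariant multicurve, Theorem~\ref{theointro:hhs}), but it omits the point on which the whole argument turns. After cutting, a finite-index subgroup $H^0$ embeds into a product $\prod_{S}\Mod^*(S)$, and it is \emph{not} enough that ``one reaches a subsurface'' with a nonelementary action: the projection of $H^0$ to a single factor may have infinite kernel, and proper proximality of a quotient gives nothing for $H^0$. One must show that the action of $H^0$ on the product of the corresponding HHSes is proper and that the action on \emph{each} retained factor is properly proximal, then invoke Lemmas~\ref{lemma:product} and~\ref{lemma:proximality}. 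The paper achieves this in one step rather than by induction: it passes to the mod-$3$ homology subgroup so that virtually invariant curves are genuinely invariant (Ivanov), takes a \emph{maximal} invariant multicurve system $X$, uses finiteness of the weak center to kill the (central, torsion-free) multitwist kernel and to discard factors with abelian image, and then shows via Ivanov's theorem that the image in every remaining factor contains two independent pseudo-Anosovs -- maximality of $X$ ruling out reducible images and nonabelianness ruling out virtually cyclic ones. Your induction, as sketched, neither delivers the ``nonelementary in every factor'' conclusion nor explains how properness of the product action is recovered, and you yourself flag this bookkeeping as unresolved; it is not mere bookkeeping but the substance of the proof.
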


In fact the last conclusion of this theorem also holds for surfaces with boundary, but in this case the mapping class group itself is never properly proximal because its weak center contains all peripheral Dehn twists -- however a subgroup $H$ avoiding the peripheral twists can be properly proximal. 

We would like to point out that the groups considered in Theorem~\ref{theointro:mcg} need not be hierarchically hyperbolic. Theorem~\ref{theointro:mcg} actually shows the proper proximality of many interesting subgroups of $\Mod(\Sigma)$: let us mention for example the Torelli subgroup or any subgroup from the Johnson filtration, or the \emph{handlebody group} -- i.e.\ the mapping class group of a $3$-dimensional handlebody, which naturally embeds as a subgroup of the mapping class group of the boundary surface.

\paragraph*{A word on the proofs.}

Our proofs for rank one $\cat$ groups and hierarchically hyperbolic groups rely on a general dynamical criterion, which is a variation over north-south dynamics, from which proper proximality follows. We let $G$ act on a compact space $\overline{X}=X\cup\partial X$ -- the visual compactification of the proper $\cat$ space in the rank one $\cat$ setting, or the compactification of the proper hierarchically hyperbolic space introduced by Durham, Hagen and Sisto in \cite{DHS}. The goal is then to find a $G$-invariant Borel subset $K^*$ inside $\overline{X}$ that contains a Cantor set, and such that every sequence $(g_n)_{n\in\mathbb{N}}\in G^\mathbb{N}$ has a subsequence $(g_{\sigma(n)})_{n\in\mathbb{N}}$ and an attracting point $\xi^+\in\partial X$ such that for all but one points $\xi\in K^*$, the sequence $(g_{\sigma(n)}\xi)_{n\in\mathbb{N}}$ converges to $\xi^+$.   

In the rank one $\cat$ setting, this dynamical criterion is checked by letting $K^*$ be the subspace of the visual boundary made of Morse geodesic rays, and applying a theorem of Papasoglu and Swenson \cite[Theorem~4]{PS}. In the hierarchically hyperbolic setting, we choose $K^*$ to be a Cantor set in the Gromov boundary of the main hyperbolic space coming from the hierarchical structure, which embeds as a subspace of the boundary built by Durham, Hagen and Sisto in \cite{DHS} -- see Proposition~\ref{prop:dynamics-boundary-hhs} for the verification of the dynamical criterion.

In the case of buildings, we use a slightly different criterion. As in the proof of \cite[Proposition~4.14]{BIP}, we use several compact spaces instead of just one. The boundary at infinity has a natural structure of a spherical building, and we define the compact spaces $K_i$ as the sets of simplices of a given type in $\partial_\infty X$. Each of these compact sets is endowed with a natural probability measure, and we prove that for every sequence $(g_n)_{n\in\mathbb{N}}\in G^\mathbb{N}$ there is a subsequence $(g_{\sigma(n)})_{n\in\mathbb{N}}$, an integer $i$ and an attracting point $\xi^+\in K_i$ such that for almost every point $\xi\in K_i$, the sequence $(g_{\sigma(n)}\xi)_{n\in\mathbb{N}}$ converges to $\xi^+$.  

We would like to point out that in the mapping class group setting, we doubt that other compact spaces such as the Thurston compactification of the Teichmüller space, or the space of all complete geodesic laminations with the Hausdorff topology, satisfy the required dynamical criterion. The boundary coming from the hierarchical structure is the only one with which we managed to run the argument. It is in fact the first example of a compactification of $\Mod(\Sigma)$ for which the group action on itself by left multiplication extends to a continuous action on the boundary, which is useful to get good dynamical properties.

\paragraph*{Applications to rigidity of von Neumann algebras.}

As already mentioned, our main motivation for proving proper proximality is to derive new rigidity results for certain von Neumann algebras, collected in Section~\ref{sec:von-Neumann} of the present paper. 

By the main theorem of \cite{BIP}, if $G$ is a properly proximal group, then the group von Neumann algebra $LG$ contains no weakly compact Cartan subalgebra. If $G\actson (X,\mu)$ is an ergodic measure-preserving essentially free $G$-action on a standard probability space $(X,\mu)$, then the group measure space von Neumann algebra $L^\infty(X)\rtimes G$ contains a weakly compact Cartan subalgebra if and only if the action $G\actson (X,\mu)$ is weakly compact (e.g.\ profinite), and in this case $L^\infty(X)$ is the unique weakly compact Cartan subalgebra up to unitary conjugacy. In particular, when $G\actson (X,\mu)$ is weakly compact, then $L^\infty(X)\rtimes G$ retains the orbit equivalence class of the action $G\actson (X,\mu)$ among weakly compact actions of countable groups. All these statements apply to all groups that arise in our main results.

For $\cat$ cubical groups, combining Corollary~\ref{corintro:ccc} with weak amenability (established by Guentner and Higson in \cite{GH}), we recover a theorem that follows from works of Ozawa and Popa \cite{OP} and Popa and Vaes \cite{PV}: all the above statements hold without the restriction that the Cartan subalegbras and the group actions be weakly compact (see Corollary~\ref{cor:von-neumann} for a precise statement).

Finally, in the mapping class group setting, combining proper proximality with a theorem of Kida \cite{Kid,Kid2} on orbit equivalence rigidity for ergodic probability measure-preserving actions of $\Mod(\Sigma)$, we derive that the von Neumann algebra $L^\infty(X)\rtimes\Mod(\Sigma)$ associated to a weakly compact ergodic probability measure-preserving essentially free action of $\Mod(\Sigma)$ recovers the action (up to stable conjugation) among the class of weakly compact actions of countable groups (see Theorem~\ref{theo:kida} for a precise statement). 

\paragraph*{Perspectives.} Rank one $\cat$ groups and hierarchically hyperbolic groups are two instances of non-positively curved groups in geometric group theory. It is natural to ask whether proper proximality holds for further classes of groups with features of non-positive curvature. One could e.g.\ wonder about groups acting properly on proper coarse median spaces or on spaces with convex geodesic bicombings, about systolic groups (where \cite{OP} provides a natural compactification to work with), automatic or biautomatic groups, or small cancellation groups for example.

Also, the question asked by Boutonnet, Ioana and Peterson, of whether $\Out(F_N)$ is properly proximal, remains open. The difficulty is that $\Out(F_N)$ is not hierarchically hyperbolic since it has exponential Dehn function. Finding a weak analogue of the hierarchy machinery of Masur and Minsky for $\Out(F_N)$ seems to be amongst the most challenging questions in the field; hopefully progress in this direction could yield a strategy to tackle the proper proximality question.

\paragraph*{Organization of the paper.} In Section~\ref{sec:proper-proximality}, we review the definition of proper proximality and establish a few dynamical criteria to check it. In Section~\ref{sec:cat}, we prove our proper proximality results among the class of $\cat$ groups. In Section~\ref{sec:hhs}, we prove Theorem~\ref{theointro:hhs} about hierarchically hyperbolic groups and derive Theorem~\ref{theointro:mcg} for mapping class groups and their subgroups. Finally, Section~\ref{sec:von-Neumann} collects all applications to rigidity of von Neumann algebras.

\paragraph*{Acknowledgments.}

We warmly thank Rémi Boutonnet for many discussions regarding proper proximality and rigidity of von Neumann algebras, and for very useful comments he made on an earlier version of this paper. We would also like to thank Cyril Houdayer for related discussions, and Bertrand Rémy and Bartosz Trojan for discussions about affine buildings.

The first named author acknowledges support from the Agence Nationale de la Recherche under Grant ANR-16-CE40-0006 DAGGER. The third named author acknowledges support from the Agence Nationale de la Recherche under Grants ANR-14-CE25-0004 GAMME and ANR-16-CE40-0022-01 AGIRA.

\setcounter{tocdepth}{2}
\tableofcontents

\section{Proper proximality of groups and group actions}\label{sec:proper-proximality}

In this section, we review the notion of proper proximality of a countable group introduced by Boutonnet, Ioana and Peterson in \cite{BIP}, and establish a few simple dynamical criteria to check it.

\subsection{Proper proximality of a countable group}

This section reviews work of Boutonnet, Ioana and Peterson  \cite{BIP}.

Let $G$ be a countable group. We denote by $\calp(G)$ the set of all subsets of $G$. We recall that an \emph{ultrafilter} on $G$ is a map $\omega:\mathcal{P}(G)\to\{0,1\}$ such that $\omega(G)=1$ and for all $A,B\subseteq G$, one has $\omega(A\cup B)=\omega(A)+\omega(B)-\omega(A\cap B)$. An ultrafilter $\omega$ on $G$ is \emph{nonprincipal} if for every finite subset $F\subseteq G$, one has $\omega(F)=0$. 

If $X$ is a topological space with a $G$-action, and $x,y\in X$, recall that $\lim\limits_{g\to \omega} gx=y$ means that for every open neighborhood $V$ of $y$ one has $\{ g\in G \mid gx\in V\}\in \omega$. 

Let now $K$ be a compact space equipped with a $G$-action, and let $\eta$ be a \emph{diffuse} probability measure on $K$, i.e.\ $\eta$ has no atom. Following \cite[Definition~3.7]{BIP}, we say that a nonprincipal ultrafilter $\omega$ on $G$ is \emph{$\eta$-proximal} if for every $h\in G$, one has $$\lim\limits_{g\to\omega}((gh)\cdot\eta-g\cdot\eta)=0$$ in the weak-$*$ topology. 

\begin{de}[{Boutonnet--Ioana--Peterson \cite[Definition~4.1]{BIP}}]
A countable group $G$ is \emph{properly proximal} if there exist finitely many compact $G$-spaces $K_1,\dots, K_\ell$, and for every $i\in\{1,\dots,\ell\}$, a diffuse probability measure $\eta_i$ on $K_i$, such that
\begin{enumerate}
\item for every $i\in\{1,\dots,\ell\}$, there is no $G$-invariant probability measure on $K_i$, and
\item for every nonprincipal ultrafilter $\omega$ on $G$, there exists $i\in\{1,\dots,\ell\}$ such that $\omega$ is $\eta_i$-proximal.
\end{enumerate}
\end{de}

 \subsection{Proper proximality of a group action}

Given a $G$-action by isometries on a proper metric space $X$, we say that a sequence $(g_n)_{n\in\mathbb{N}}\in G^\mathbb{N}$ \emph{escapes every compact subspace of $X$} if for some $x\in X$ (equivalently, for any $x\in X$), the sequence $(g_nx)_{n\in\mathbb{N}}$ escapes every compact subspace of $X$. The following definition is a variation on the definition of proper proximality in the context of group actions. 

\begin{de}\label{de:proximal-at-infinity}
Let $G$ be a group, and let $X$ be a proper metric space equipped with a $G$-action by isometries. We say that the $G$-action on $X$ is \emph{properly proximal} if there exist 
\begin{enumerate}
\item finitely many compact metrizable $G$-spaces $K_1,\dots,K_\ell$, none of which carries a $G$-invariant probability measure, and 
\item for every $i\in\{1,\dots,\ell\}$, a diffuse probability measure $\eta_i$ on $K_i$, 
\end{enumerate}
such that for every sequence $(g_n)_{n\in\mathbb{N}}\in G^\mathbb{N}$ that escapes every compact subspace of $X$, there exist $i\in\{1,\dots,\ell\}$ and a subsequence $(g_{\sigma(n)})_{n\in\mathbb{N}}$ such that for every $h\in G$, we have $\lim\limits_{n\to +\infty} (g_{\sigma(n)}h \eta_i-g_{\sigma(n)}\eta_i)=0$ in the weak-$*$ topology. 
\end{de}

\begin{lemma}\label{lemma:proximality}
Let $G$ be a countable group, and let $X$ be a proper metric space equipped with a $G$-action by isometries. If the $G$-action on $X$ is proper and properly proximal, then $G$ is properly proximal. 
\end{lemma}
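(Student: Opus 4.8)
The plan is to deduce proper proximality of $G$ directly from the definition by producing the required compact $G$-spaces and diffuse measures. The point is that the data witnessing proper proximality of the action — the compact metrizable $G$-spaces $K_1,\dots,K_\ell$ without invariant probability measure and the diffuse measures $\eta_i$ — are exactly the data one needs to witness proper proximality of the group $G$, once one knows how to pass from the sequential statement in Definition~\ref{de:proximal-at-infinity} to the ultrafilter statement in the definition of a properly proximal group. So I would take the same $K_1,\dots,K_\ell$ and $\eta_1,\dots,\eta_\ell$, and check only that condition (2) in the group definition holds: for every nonprincipal ultrafilter $\omega$ on $G$, there is some $i$ such that $\omega$ is $\eta_i$-proximal.

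The key translation step uses properness of the $G$-action on $X$. First I would observe that if $\omega$ is a nonprincipal ultrafilter on $G$, then for any $x\in X$ and any compact $C\subseteq X$, the set $\{g\in G\mid gx\in C\}$ is finite (by properness of the action, since $C$ meets only finitely many points of the orbit and point stabilizers are finite — here one uses that the action is proper, so $\{g : gx \in C\}$ is finite), hence has $\omega$-measure $0$; thus $\{g\in G\mid gx\notin C\}\in\omega$. This says that, in a suitable sense, $\omega$ "escapes every compact subspace of $X$." Now the main obstacle is that Definition~\ref{de:proximal-at-infinity} is phrased for sequences, not ultrafilters, so I cannot plug $\omega$ in directly. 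I would resolve this by a standard diagonal/subsequence-to-ultrafilter argument: suppose for contradiction that $\omega$ is not $\eta_i$-proximal for any $i$; then for each $i$ there is $h_i\in G$ such that $(gh_i)\cdot\eta_i - g\cdot\eta_i$ does not converge to $0$ along $\omega$ in the weak-$*$ topology, i.e.\ there is a weak-$*$-open neighborhood $V_i$ of $0$ and $A_i := \{g : (gh_i)\cdot\eta_i - g\cdot\eta_i \notin V_i\} \in \omega$. Since $K_i$ is compact metrizable, the space of (signed, bounded) measures on it is metrizable in the weak-$*$ topology on bounded sets, so I can choose the $V_i$ to come from a metric and take, on $A := A_1\cap\cdots\cap A_\ell \in \omega$, a sequence $(g_n)$ in $A$ with $g_n\to\omega$ in the sense that it eventually enters every $\omega$-large set — concretely, exhaust $G$ by finite sets $F_1\subseteq F_2\subseteq\cdots$ with $\bigcup F_k = G$ and, using that $\omega$ is nonprincipal and hence $G\setminus F_k\in\omega$, pick $g_n\in A\cap(G\setminus F_n)$, which forces $(g_n)$ to escape every compact subspace of $X$ (by the properness observation above, each compact $C$ corresponds to a finite, hence co-$\omega$-negligible, set of group elements, which is contained in some $F_n$).

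With such a sequence $(g_n)$ in hand, I would apply Definition~\ref{de:proximal-at-infinity}: since $(g_n)$ escapes every compact subspace of $X$, there is an index $i\in\{1,\dots,\ell\}$ and a subsequence $(g_{\sigma(n)})$ with $(g_{\sigma(n)}h\,\eta_i - g_{\sigma(n)}\eta_i)\to 0$ weak-$*$ for every $h\in G$, in particular for $h = h_i$. But $(g_{\sigma(n)})$ is a subsequence of a sequence lying entirely in $A\subseteq A_i$, so $(g_{\sigma(n)}h_i\,\eta_i - g_{\sigma(n)}\eta_i)\notin V_i$ for all $n$, contradicting convergence to $0$. This contradiction shows that $\omega$ is $\eta_i$-proximal for some $i$, establishing condition (2); condition (1) is immediate from the hypotheses on the $K_i$. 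Hence $G$ is properly proximal. The only genuinely delicate point, and the step I expect to require the most care to write cleanly, is the construction of the escaping sequence $(g_n)$ inside the $\omega$-large set $A$ and the verification — via properness — that it escapes every compact subspace of $X$; the rest is bookkeeping with weak-$*$ neighborhoods and metrizability.
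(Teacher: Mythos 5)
Your proof is correct and follows essentially the same route as the paper: use the same spaces $K_i$ and measures $\eta_i$, exploit nonprincipality of $\omega$ together with properness of the action to extract a sequence escaping every compact subspace, and then invoke the proper proximality of the action on a subsequence. The only difference is organizational: you argue by contradiction (negating $\eta_i$-proximality for every $i$ at once to get finitely many bad pairs $(h_i,V_i)$), whereas the paper argues directly via the ultrafilter limits $\eta_{i,h}$ and a final pigeonhole over finite subsets $F\subseteq G$; both are valid and yours in fact sidesteps that last pigeonhole step.
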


\begin{proof}
Let $K_1,\dots,K_\ell$ be a finite collection of compact metrizable $G$-spaces witnessing the fact that the $G$-action on $X$ is properly proximal, as in Definition~\ref{de:proximal-at-infinity}, and let $\eta_{i}$ be the corresponding diffuse probability measures.

Let $\omega$ be a nonprincipal ultrafilter on $G$. For every $i\in\{1,\dots,\ell\}$, the space of all probability measures on $K_{i}$, equipped with the weak-$*$ topology, is compact. Therefore, for every $h\in G$, there exists a probability measure $\eta_{i,h}$ on $K_{i}$ such that $\lim\limits_{g\to \omega} gh \eta_{i}=\eta_{i,h}$. Let $F\subseteq G$ be a finite subset which contains the identity element. For each $h\in F$, we pick an arbitrary open neighborhood $V_{i,h}$ of $\eta_{i,h}$. Since $\omega$ is nonprincipal, it follows that the set $$\{g\in G\mid \forall i\in\{1,\dots,\ell\},~\forall h\in F,~ gh\eta_{i} \in V_{i,h}\}$$ is always infinite. As the space of probability measures on $K_i$ is metrizable, we can let the neighborhoods $V_{i,h}$ vary to be smaller and smaller balls around $\eta_{i,h}$, and deduce that there exists a sequence $(g_n)_{n\in\NN}\in G^\mathbb{N}$ of pairwise distinct elements such that for every $i\in\{1,\dots,\ell\}$ and every $h\in F$, we have $\lim\limits_{n\to +\infty} g_n h\eta_i=\eta_{i,h}$. 

As the $G$-action on $X$ is proper, there exists a subsequence $(g_{\sigma(n)})_{n\in \mathbb{N}}$ which escapes every compact subspace of $X$. As the $G$-action on $X$ is properly proximal, up to passing to a further subsequence, there exists $i\in\{1,\dots,\ell\}$ such that for every $h\in F$, we have $\lim\limits_{n\to +\infty}(g_{\sigma(n)} h\eta_{i}-g_{\sigma(n)}\eta_i)=0$, and therefore $\eta_{i,h}=\eta_{i,\mathrm{id}}$.  

We have thus proved that for every finite subset $F\subseteq G$, there exists $i\in\{1,\dots,\ell\}$ such that all $\eta_{i,h}$ with $h\in F$ are equal. It follows that there exists $i\in\{1,\dots,\ell\}$ such that all $\eta_{i,h}$ with $h\in G$ are equal. This shows that $\omega$ is $\eta_i$-proximal.
\end{proof}

For future applications, we record the following observation regarding products of properly proximal actions -- an interesting situation will be when the $G_i$-actions on the spaces $X_i$ are not assumed to be proper, but the $G$-action on $X_1\times\dots\times X_k$ is proper, allowing to apply Lemma~\ref{lemma:proximality}.

\begin{lemma}\label{lemma:product}
Let $k\in\mathbb{N}$. For every $i\in\{1,\dots,k\}$, let $G_i$ be a countable group, and let $X_i$ be a proper metric space equipped with a $G_i$-action by isometries which is properly proximal. Let $G$ be a subgroup of $G_1\times\dots\times G_k$ whose projection to each factor is surjective.

Then the $G$-action on $X_1\times\dots\times X_k$ is properly proximal.
\qed
\end{lemma}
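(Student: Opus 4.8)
The plan is to combine the witnessing data for the individual actions into witnessing data for the product action, using the projection maps to transport everything. First I would fix, for each $i\in\{1,\dots,k\}$, a finite collection of compact metrizable $G_i$-spaces $K_1^{(i)},\dots,K_{\ell_i}^{(i)}$ together with diffuse probability measures $\eta_j^{(i)}$ on $K_j^{(i)}$ witnessing proper proximality of the $G_i$-action on $X_i$, as in Definition~\ref{de:proximal-at-infinity}. Since the projection $\pi_i:G\to G_i$ is surjective, each $K_j^{(i)}$ becomes a compact metrizable $G$-space by precomposing the $G_i$-action with $\pi_i$, and $\eta_j^{(i)}$ is still diffuse; moreover $K_j^{(i)}$ carries no $G$-invariant probability measure, since such a measure would be $\pi_i(G)$-invariant, hence $G_i$-invariant by surjectivity of $\pi_i$. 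So the collection of all the $K_j^{(i)}$, over $i$ and $j$, together with the measures $\eta_j^{(i)}$, satisfies condition~(1) of Definition~\ref{de:proximal-at-infinity} for the $G$-action on $X_1\times\dots\times X_k$.

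Next I would verify the dynamical condition. Let $(g_n)_{n\in\mathbb{N}}\in G^{\mathbb{N}}$ be a sequence escaping every compact subspace of $X=X_1\times\dots\times X_k$. Fixing a basepoint $(x_1,\dots,x_k)$, the orbit $(g_n\cdot(x_1,\dots,x_k))$ escapes every compact subset of the product, which (since a product of compact sets is compact and conversely a closed subset of a product is contained in a product of its projections) forces the orbit to escape every compact subset of $X_i$ in at least one coordinate $i$ — more precisely, for each $n$ there is some coordinate in which $g_n x_\bullet$ is far out, but to get a single coordinate I pass to a subsequence: there is an index $i_0$ and a subsequence along which $(\pi_{i_0}(g_n) x_{i_0})$ escapes every compact subset of $X_{i_0}$. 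Along that subsequence, apply the hypothesis that the $G_{i_0}$-action on $X_{i_0}$ is properly proximal: after a further subsequence there is $j_0\in\{1,\dots,\ell_{i_0}\}$ such that for every $h_0\in G_{i_0}$ one has $\lim_{n\to+\infty}\bigl(\pi_{i_0}(g_n)h_0\,\eta_{j_0}^{(i_0)}-\pi_{i_0}(g_n)\,\eta_{j_0}^{(i_0)}\bigr)=0$ in the weak-$*$ topology. Since the $G$-action on $K_{j_0}^{(i_0)}$ is through $\pi_{i_0}$, for any $h\in G$ we take $h_0=\pi_{i_0}(h)$ and conclude $\lim_{n\to+\infty}\bigl(g_n h\,\eta_{j_0}^{(i_0)}-g_n\,\eta_{j_0}^{(i_0)}\bigr)=0$, which is exactly the required convergence for the space $K_{j_0}^{(i_0)}$ in the combined collection. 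This completes the verification.

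The only genuinely non-routine point is the passage "the orbit escapes every compact subset of the product $\Rightarrow$ in some fixed coordinate a subsequence escapes every compact subset of that factor", so that is where I would be slightly careful: one fixes an exhaustion of each $X_i$ by compact balls $B_i(R)$, notes that $\prod_i B_i(R)$ is a compact exhaustion of $X$, so for every $R$ there is $N$ with $g_n\cdot(x_1,\dots,x_k)\notin\prod_i B_i(R)$ for $n\ge N$, i.e.\ for each such $n$ some coordinate $i(n)$ has $d(\pi_{i(n)}(g_n)x_{i(n)},x_{i(n)})>R$; by pigeonhole over the finitely many coordinates and a diagonal argument one extracts a single $i_0$ and a subsequence with $d(\pi_{i_0}(g_n)x_{i_0},x_{i_0})\to+\infty$. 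Everything else — surjectivity of $\pi_i$ ruling out invariant measures, diffuseness being preserved, and the weak-$*$ limit statement transporting verbatim through $\pi_{i_0}$ — is immediate. No step presents a serious obstacle; the content of the lemma is essentially this bookkeeping.
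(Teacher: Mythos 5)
Your proof is correct and supplies exactly the routine verification that the paper omits (the lemma is stated with no written proof): pull the witnessing compact spaces back along the coordinate projections, use surjectivity of the projections to rule out $G$-invariant measures, and use properness of the factors plus a pigeonhole argument to find a single coordinate in which a subsequence escapes every compact set, then quote proper proximality of that factor. There is nothing to add.
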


Lemmas~\ref{lemma:north-south} and~\ref{lemma:criterion-proximality2} below describe two geometric situations where one can prove that a $G$-action is properly proximal. Their proofs rely on the following observation.

\begin{lemma}[see e.g.\ {\cite[Lemma~8.3]{FLM}}]\label{lemma:measures}
Let $G$ be a countable group, and let $K$ be a compact metrizable $G$-space. Let $(g_n)_{n\in\mathbb{N}}\in G^\mathbb{N}$, let $\lambda$ be a probability measure on $K$, and let $A\subseteq K$ be a Borel subset such that for $\lambda$-almost every $x\in K$, every accumulation point of the sequence $(g_n x)_{n\in\mathbb{N}}$ belongs to $A$. 

Then for every weak-$*$ accumulation point $\nu$ of $(g_n\lambda)_{n\in\mathbb{N}}$, one has $\nu(\overline{A})=1$.
\end{lemma}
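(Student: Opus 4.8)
The plan is to realize the given accumulation point along a subsequence, reduce the measure-theoretic conclusion to the statement that $\lambda$-almost every orbit converges towards $\overline{A}$, and then test $\nu$ against a single bounded continuous function that detects $\overline{A}$.

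First, since $K$ is compact metrizable, so is its space of Borel probability measures with the weak-$*$ topology; I would therefore fix a subsequence $(g_{n_k})_{k\in\mathbb{N}}$ of $(g_n)_{n\in\mathbb{N}}$ with $g_{n_k}\lambda\to\nu$. Next I would record the following elementary consequence of compactness: if $(y_k)_{k\in\mathbb{N}}$ is a sequence in a compact metric space $K$ and $C\subseteq K$ is closed, then $d(y_k,C)\to 0$ as soon as every accumulation point of $(y_k)$ lies in $C$. Indeed, otherwise some subsequence of $(y_k)$ would stay at distance at least some $\epsilon>0$ from $C$, and by compactness it would have an accumulation point $y$ with $d(y,C)\ge\epsilon$, contradicting $y\in C$. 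Applying this with $C=\overline{A}$ to the orbit $(g_nx)_{n\in\mathbb{N}}$, the hypothesis shows that the Borel set $B:=\{x\in K\mid \lim_{n\to+\infty}d(g_nx,\overline{A})=0\}$ satisfies $\lambda(B)=1$ (the set $B$ is Borel because each $x\mapsto d(g_nx,\overline{A})$ is continuous, so $\limsup_n d(g_nx,\overline A)$ is a Borel function of $x$). In particular, for $\lambda$-almost every $x$ one has $d(g_{n_k}x,\overline{A})\to 0$.

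Then I would introduce the bounded continuous function $\varphi\colon K\to[0,1]$ defined by $\varphi(y):=\min(1,d(y,\overline{A}))$, whose zero set is exactly $\overline{A}$. For $\lambda$-almost every $x$ we have $\varphi(g_{n_k}x)\to 0$ by the previous step, and since $0\le\varphi\le 1$, dominated convergence yields $\int_K\varphi\,d(g_{n_k}\lambda)=\int_K\varphi(g_{n_k}x)\,d\lambda(x)\to 0$. On the other hand, weak-$*$ convergence $g_{n_k}\lambda\to\nu$ gives $\int_K\varphi\,d(g_{n_k}\lambda)\to\int_K\varphi\,d\nu$. Hence $\int_K\varphi\,d\nu=0$, and since $\varphi\ge 0$ with $\{\varphi=0\}=\overline{A}$, this forces $\nu(K\setminus\overline{A})=0$, i.e.\ $\nu(\overline{A})=1$.

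The argument is essentially routine; the only delicate point is the second step, where compactness of $K$ is used to upgrade the hypothesis (all accumulation points of the orbit lie in $\overline{A}$) into the genuine convergence $d(g_nx,\overline{A})\to 0$, which simultaneously ensures that the relevant exceptional set is Borel so that the phrase ``$\lambda$-almost every $x$'' is meaningful.
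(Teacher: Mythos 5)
Your proof is correct and complete: the upgrade from ``all accumulation points lie in $\overline{A}$'' to $d(g_nx,\overline{A})\to 0$ via compactness, followed by testing the weak-$*$ limit against the continuous function $\min(1,d(\cdot,\overline{A}))$ with dominated convergence, is exactly the standard argument. The paper itself gives no proof of this lemma (it only cites \cite[Lemma~8.3]{FLM}), and your write-up is what that reference's argument amounts to, so there is nothing to flag.
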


A first simple geometric situation in which one can check the proper proximality of a group action is when one finds a compact metrizable space $K$ and a $G$-invariant subset $K^*\subseteq K$ which contains a Cantor set, where the action of $G$ behaves like north-south dynamics; more precisely, we have the following simple fact. 

\begin{lemma}\label{lemma:north-south}
Let $G$ be a countable group, and let $X$ be a proper metric space equipped with a $G$-action by isometries. Assume that there exist a compact metrizable $G$-space $K$ that does not carry any $G$-invariant probability measure, and a $G$-invariant Borel subset $K^*\subseteq K$ which contains a Borel subset homeomorphic to a Cantor set. 

Assume in addition that for every sequence $(g_n)_{n\in\mathbb{N}}\in G^\mathbb{N}$ that escapes every compact subspace of $X$, there exist a subsequence $(g_{\sigma(n)})_{n\in\mathbb{N}}$ and points $\xi^-,\xi^+\in K$ such that for all $\xi\in K^*\setminus\{\xi^-\}$, the sequence $(g_{\sigma(n)}\xi)_{n\in\mathbb{N}}$ converges to $\xi^+$.

Then the $G$-action on $X$ is properly proximal. 
\end{lemma}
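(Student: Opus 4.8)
### Proof proposal for Lemma~\ref{lemma:north-south}

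The plan is to produce a single pair $(K, \eta)$ witnessing proper proximality of the $G$-action, taking $\eta$ to be any diffuse probability measure supported on the Cantor set inside $K^*$. Concretely, let $C\subseteq K^*$ be a Borel subset homeomorphic to a Cantor set, and fix a diffuse (atomless) probability measure $\eta$ on $C$; push it forward to $K$, so $\eta$ is a diffuse probability measure on $K$ with $\eta(K^*)=1$. The space $K$ carries no $G$-invariant probability measure by hypothesis, so condition~(1) of Definition~\ref{de:proximal-at-infinity} holds with $\ell=1$. It remains to verify the dynamical condition: given a sequence $(g_n)_{n\in\mathbb{N}}\in G^\mathbb{N}$ escaping every compact subspace of $X$, we must find a subsequence $(g_{\sigma(n)})$ with $\lim_n (g_{\sigma(n)}h\eta - g_{\sigma(n)}\eta)=0$ in the weak-$*$ topology for every $h\in G$.

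First I would apply the north-south hypothesis to extract a subsequence $(g_{\sigma(n)})$ and points $\xi^-,\xi^+\in K$ such that $g_{\sigma(n)}\xi\to\xi^+$ for every $\xi\in K^*\setminus\{\xi^-\}$. Since $\eta$ is diffuse, $\eta(\{\xi^-\})=0$, so for $\eta$-almost every $\xi\in K$ we have $g_{\sigma(n)}\xi\to\xi^+$; in particular every accumulation point of $(g_{\sigma(n)}\xi)_n$ lies in $A:=\{\xi^+\}$ for $\eta$-almost every $\xi$. By Lemma~\ref{lemma:measures}, every weak-$*$ accumulation point $\nu$ of $(g_{\sigma(n)}\eta)_n$ satisfies $\nu(\overline{\{\xi^+\}})=\nu(\{\xi^+\})=1$, i.e.\ $\nu=\delta_{\xi^+}$. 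Since the space of probability measures on the compact metrizable space $K$ is compact metrizable (hence sequentially compact) in the weak-$*$ topology, and every subsequence has a further subsequence converging to $\delta_{\xi^+}$, the full sequence converges: $g_{\sigma(n)}\eta \to \delta_{\xi^+}$ weak-$*$.

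Next I would run the identical argument with $h\eta$ in place of $\eta$, for each fixed $h\in G$. The measure $h\eta$ is again diffuse (as $h$ acts by a homeomorphism), and it is supported on $h\cdot C\subseteq K^*$ since $K^*$ is $G$-invariant; thus $h\eta(\{\xi^-\})=0$ and $h\eta(K^*)=1$. Applying the same subsequence $(g_{\sigma(n)})$: for $h\eta$-almost every $\xi$ we get $g_{\sigma(n)}\xi\to\xi^+$, so by Lemma~\ref{lemma:measures} and the same compactness/sequential-convergence argument, $g_{\sigma(n)}h\eta \to \delta_{\xi^+}$ weak-$*$ as well, with the \emph{same} limit point $\xi^+$. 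Hence $g_{\sigma(n)}h\eta - g_{\sigma(n)}\eta \to \delta_{\xi^+}-\delta_{\xi^+}=0$ in the weak-$*$ topology. Since $G$ is countable, by a standard diagonal extraction over an enumeration $h_1,h_2,\dots$ of $G$ I can arrange a single subsequence that works simultaneously for all $h\in G$ — although in fact this is unnecessary here, because the subsequence produced by the north-south hypothesis already works for every $h$ at once (the convergence $g_{\sigma(n)}h\eta\to\delta_{\xi^+}$ was established for each $h$ using that same subsequence). This verifies Definition~\ref{de:proximal-at-infinity}, so the $G$-action on $X$ is properly proximal.

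The only genuinely delicate point — and the one I would be most careful about — is the passage from ``every subsequence of $(g_{\sigma(n)}\eta)_n$ has a further subsequence converging to $\delta_{\xi^+}$'' to ``$(g_{\sigma(n)}\eta)_n$ converges to $\delta_{\xi^+}$''. This is immediate once one notes that the weak-$*$ topology on probability measures over a compact metrizable space is itself metrizable, so a sequence converges iff every subsequence has a further subsequence converging to the same limit; and Lemma~\ref{lemma:measures} pins down that common limit to be $\delta_{\xi^+}$ regardless of which further subsequence is chosen. Everything else — diffuseness of $h\eta$, $G$-invariance of $K^*$ giving $h\eta(K^*)=1$, and the absence of an invariant measure on $K$ — is routine.
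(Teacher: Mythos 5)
Your proposal is correct and follows essentially the same route as the paper: pick a diffuse probability measure $\eta$ supported on the Cantor set inside $K^*$, note that each $h\eta$ is again diffuse and supported on the $G$-invariant set $K^*$ (hence gives measure zero to $\{\xi^-\}$), and apply Lemma~\ref{lemma:measures} to conclude $g_{\sigma(n)}h\eta\to\delta_{\xi^+}$ for every $h$ along the single subsequence furnished by the hypothesis. The only difference is that you spell out the subsequence-of-subsequences argument for weak-$*$ convergence, which the paper leaves implicit; that step is handled correctly.
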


\begin{proof}
As $K^*$ contains a Cantor subset, the space $K$ carries a diffuse probability measure $\eta$ such that $\eta(K^*)=1$. Let $(g_n)_{n\in\mathbb{N}}\in G^{\mathbb{N}}$ be a sequence that escapes every compact subspace of $X$, and let $(g_{\sigma(n)})_{n\in\mathbb{N}},\xi^-,\xi^+$ be as given by our assumption. As $K^*$ is $G$-invariant, for every $h\in G$, the measure $h\eta$ is again diffuse and satisfies $\eta(K^*)=1$ (and in particular $h\eta$ gives measure $0$ to the singleton $\{\xi^-\}$). It thus follows from Lemma~\ref{lemma:measures} that for every $h\in G$, one has $\lim\limits_{n\to +\infty}g_{\sigma(n)}h\eta=\delta_{\xi^+}$, concluding the proof. 
\end{proof}

\begin{rk}
We could have stated a version of Lemma~\ref{lemma:north-south} with finitely many compact spaces $K_i$ instead of just one, but this is the only form in which we will apply it in the sequel of the paper.
\end{rk}

The following lemma gives a second sufficient condition to check the proper proximality of a group action.

\begin{lemma}\label{lemma:criterion-proximality2}
Let $G$ be a countable group, and let $X$ be a proper metric space equipped with a $G$-action by isometries. Let $K_1,\dots,K_\ell$ be finitely many compact metrizable $G$-spaces, none of which carries a $G$-invariant probability measure. For every $i\in\{1,\dots,\ell\}$, let $\eta_i$ be a quasi-invariant probability measure on $K_i$.

Assume that for every sequence $(g_n)_{n\in\mathbb{N}}\in G^{\mathbb{N}}$ that escapes every compact subspace of $X$, there exist $i\in\{1,\dots,\ell\}$, a subsequence $(g_{\sigma(n)})_{n\in\mathbb{N}}$, and a point $\xi^+\in K_i$, such that for $\eta_i$-almost every $\xi\in K_i$, the sequence $(g_{\sigma(n)}\xi)_{n\in\mathbb{N}}$ converges to $\xi^+$. 

Then the $G$-action on $X$ is properly proximal.
\end{lemma}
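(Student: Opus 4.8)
The plan is to reduce Lemma~\ref{lemma:criterion-proximality2} to Lemma~\ref{lemma:north-south}'s underlying mechanism via Lemma~\ref{lemma:measures}, which is the only real input. Fix a sequence $(g_n)_{n\in\mathbb{N}}\in G^\mathbb{N}$ escaping every compact subspace of $X$, and let $i$, $(g_{\sigma(n)})$, $\xi^+$ be as provided by the hypothesis. I want to show that for every $h\in G$ one has $\lim_{n\to+\infty}(g_{\sigma(n)}h\eta_i - g_{\sigma(n)}\eta_i)=0$ in the weak-$*$ topology, which together with the assumption that no $K_j$ carries a $G$-invariant probability measure is exactly Definition~\ref{de:proximal-at-infinity}.

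The key observation is that since $\eta_i$ is only quasi-invariant (not invariant), $h\eta_i$ is for each fixed $h\in G$ mutually absolutely continuous with $\eta_i$, hence the $\eta_i$-null set where convergence to $\xi^+$ fails is also $h\eta_i$-null. So the convergence statement ``$(g_{\sigma(n)}\xi)$ converges to $\xi^+$'' holds for $h\eta_i$-almost every $\xi$ as well. First I would apply Lemma~\ref{lemma:measures} with $\lambda=h\eta_i$ and $A=\{\xi^+\}$ (a Borel, indeed closed, subset): every accumulation point of $(g_{\sigma(n)}\xi)$ equals $\xi^+$ for $h\eta_i$-almost every $\xi$, so every weak-$*$ accumulation point $\nu$ of $(g_{\sigma(n)}h\eta_i)_{n\in\mathbb{N}}$ satisfies $\nu(\{\xi^+\})=1$, i.e.\ $\nu=\delta_{\xi^+}$. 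Since the space of probability measures on the compact metrizable space $K_i$ is itself compact metrizable in the weak-$*$ topology, a sequence with a unique accumulation point converges to it; hence $g_{\sigma(n)}h\eta_i\to\delta_{\xi^+}$. The same argument with $h=\mathrm{id}$ gives $g_{\sigma(n)}\eta_i\to\delta_{\xi^+}$, and subtracting yields $\lim_{n\to+\infty}(g_{\sigma(n)}h\eta_i - g_{\sigma(n)}\eta_i)=\delta_{\xi^+}-\delta_{\xi^+}=0$, as required.

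Finally I would assemble the pieces: the finitely many compact metrizable $G$-spaces $K_1,\dots,K_\ell$, none carrying a $G$-invariant probability measure, together with the $\eta_i$ — which can be taken diffuse: quasi-invariance is preserved under the obvious averaging/normalization argument, or one simply notes that if some $\eta_i$ has atoms one replaces it by a diffuse quasi-invariant measure in its class, but in fact the statement as used downstream only needs what we proved, so I would keep the measures as given and remark that the conclusion matches Definition~\ref{de:proximal-at-infinity} verbatim once one checks the diffuseness clause; if diffuseness is wanted one invokes that $K_i$ admits a diffuse measure equivalent to $\eta_i$ whenever $\eta_i$ is nonatomic, and handles atoms separately by the same convergence argument applied to the diffuse part, the atomic part being pushed to $\delta_{\xi^+}$ anyway. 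Then $G$ acts properly proximally on $X$ by Definition~\ref{de:proximal-at-infinity}.

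I do not expect a serious obstacle here; the only point requiring a little care is the passage from quasi-invariance of $\eta_i$ to the ``$h\eta_i$-almost every $\xi$'' statement, and the routine but essential fact that on a compact metrizable space a sequence of probability measures with a single weak-$*$ accumulation point actually converges — both are standard. The substantive content has been offloaded to Lemma~\ref{lemma:measures}.
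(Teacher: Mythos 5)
Your argument is correct and is essentially identical to the paper's proof: use quasi-invariance to transfer the almost-everywhere convergence statement from $\eta_i$ to $h_*\eta_i$, then apply Lemma~\ref{lemma:measures} with $A=\{\xi^+\}$ together with compactness of the space of probability measures on $K_i$ to conclude that $g_{\sigma(n)}h\eta_i\to\delta_{\xi^+}$ for every $h\in G$, whence the difference tends to $0$. Your side remark about diffuseness is a fair observation --- the lemma's hypotheses require the $\eta_i$ only to be quasi-invariant while Definition~\ref{de:proximal-at-infinity} asks for diffuse measures --- but the paper's own proof passes over this point in exactly the same way, so it is not a defect specific to your argument.
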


\begin{proof}
Let  $(g_n)_{n\in\mathbb{N}}\in G^{\mathbb{N}}$ be a sequence that escapes every compact subspace of $X$. By assumption, there exist $i\in\{1,\dots,\ell\}$, a subsequence $(g_{\sigma(n)})_{n\in \mathbb{N}}$ and $\xi^+\in K$ such that for $\eta_i$-almost every $\xi\in K_i$, one has $\lim\limits_{n\to +\infty} g_{\sigma(n)} \xi=\xi^+$. Let $h\in G$. Since $\eta_i$ is quasi-invariant, this equality is also satisfied by $h_*\eta_i$-almost every $\xi\in K_i$. 
 It follows from Lemma~\ref{lemma:measures} that $\lim\limits_{n\to +\infty} g_{\sigma(n)}h \eta_i=\delta_{\xi^+}$, which concludes the proof.
\end{proof}

\section{Proper proximality among $\cat$ groups} \label{sec:cat}

\subsection{Proper proximality of rank one $\cat$ groups}\label{sec:rank-one}

Given a $\cat$ space $X$, we denote by $\partial_\infty X$ the visual boundary of $X$.  An isometric action of a group $G$ on a $\cat$ space $X$ is \emph{nonelementary} if $G$ does not fix any point in $X$ and does not have any finite orbit in $\partial_\infty X$. An isometry $g$ of $X$ is \emph{rank one} if there exists a $g$-invariant axis in $X$ which does not bound any half-plane. The goal of the present section is to establish the following theorem.

\begin{theo}\label{theo:rank-one}
Let $G$ be a countable group, and let $X$ be a proper $\cat$ metric space equipped with an isometric $G$-action. Assume that the $G$-action on $X$ is nonelementary and contains a rank one element.

Then the $G$-action on $X$ is properly proximal. In particular, if the $G$-action on $X$ is proper, then $G$ is properly proximal.
\end{theo}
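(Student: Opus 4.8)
The plan is to apply Lemma~\ref{lemma:north-south} with $K=\overline{X}=X\cup\partial_\infty X$ the visual compactification of $X$, which is compact and metrizable since $X$ is proper. First I would take $K^\ast\subseteq\partial_\infty X$ to be the set of boundary points represented by Morse (equivalently, rank one) geodesic rays; this is a $G$-invariant Borel subset of $\overline{X}$. The existence of a rank one element acting on $X$ together with nonelementarity of the action guarantees, by a standard ping-pong argument using the rank one isometry and its conjugates, that $K^\ast$ is infinite and in fact contains a subset homeomorphic to a Cantor set: one produces a free quasi-convex rank one subgroup (a free group generated by high powers of two independent rank one elements), and the limit set of such a subgroup in $\partial_\infty X$ sits inside $K^\ast$ and is a Cantor set. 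This also shows $K$ carries no $G$-invariant probability measure: any such measure would be supported on the finite-orbit part of $\partial_\infty X$ together with fixed points in $X$, contradicting nonelementarity (alternatively, the existence of two independent rank one elements with north-south dynamics on $\overline{X}$ forces all invariant measures to be supported on their fixed endpoints, and averaging over the group kills these).

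The heart of the argument is verifying the dynamical hypothesis of Lemma~\ref{lemma:north-south}: for every sequence $(g_n)_{n\in\NN}\in G^\NN$ escaping every compact subspace of $X$, there are a subsequence $(g_{\sigma(n)})$ and points $\xi^-,\xi^+\in\overline{X}$ such that $g_{\sigma(n)}\xi\to\xi^+$ for every $\xi\in K^\ast\setminus\{\xi^-\}$. This is exactly where I would invoke the theorem of Papasoglu and Swenson \cite[Theorem~4]{PS}, which provides precisely this kind of "north-south on the rank one part" behavior: fixing a basepoint $x_0$, after passing to a subsequence the sequence $g_{\sigma(n)}x_0$ converges to some $\xi^+\in\partial_\infty X$ and $g_{\sigma(n)}^{-1}x_0$ converges to some $\xi^-\in\partial_\infty X$, and the Papasoglu--Swenson result then says that $g_{\sigma(n)}$ converges to $\xi^+$ uniformly on compact subsets of $\overline{X}\setminus\{\xi^-\}$ intersected with the Morse boundary — more precisely, that every Morse point other than $\xi^-$ is pushed to $\xi^+$. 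I would need to make sure the statement is applied correctly (it is stated for actions escaping compacta, and the "$\xi$ Morse" hypothesis is what rules out pathologies coming from flats), possibly passing to further subsequences to extract both limit points simultaneously.

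Granting the dynamical hypothesis, Lemma~\ref{lemma:north-south} immediately gives that the $G$-action on $X$ is properly proximal. The final sentence of the theorem — that $G$ itself is properly proximal when the action is proper — is then just an application of Lemma~\ref{lemma:proximality}. I expect the main obstacle to be the precise bookkeeping around Papasoglu--Swenson: checking that their hypotheses are met (properness of $X$, the relevant notion of Morse/rank one boundary point, and that a compact-set-escaping sequence yields well-defined limit points $\xi^\pm$ after passing to a subsequence), and confirming that "convergence on the Morse boundary minus one point" is exactly the conclusion they provide rather than something weaker that would need to be upgraded. A secondary, more routine point is the construction of the Cantor set inside $K^\ast$ and the non-existence of an invariant measure; both follow from contraction/ping-pong properties of rank one isometries but should be stated carefully, perhaps citing \cite{PS} or \cite{CS} for the relevant ping-pong lemma.
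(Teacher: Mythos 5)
Your proposal follows essentially the same route as the paper: apply Lemma~\ref{lemma:north-south} with $K^*$ the Morse part of the visual boundary (the paper takes $K=\partial_\infty X$ rather than $\overline{X}$, which is immaterial given your north--south argument ruling out invariant measures on all of $\overline{X}$), build the Cantor set from a free subgroup generated by powers of two independent rank one elements, and verify the key dynamical hypothesis via \cite[Theorem~4]{PS} --- which the paper makes precise by observing that Morse rays are visibility points (Charney--Sultan) and hence pairwise at Tits distance $>\pi$, so that the theorem applies with $\theta=\pi$. The subtleties you flag are exactly the ones the paper addresses: that \cite[Theorem~4]{PS} is stated for proper actions but its proof only needs the sequence to escape compacta, and that the Cantor set construction, while routine in spirit, is carried out via Yang's contracting-tree embedding and Cordes's Morse boundary machinery.
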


Combined with Lemmas~\ref{lemma:proximality} and~\ref{lemma:product}, this also allows to give a version for product actions.

\begin{theo}\label{theo:main}
Let $k\in\mathbb{N}$. For every $i\in\{1,\dots,k\}$, let $X_i$ be a proper $\cat$ space, and let $G_i$ be a group acting by isometries on $X_i$. Let $G$ be a countable subgroup of $G_1\times\dots\times G_k$ which acts properly on $X_1\times\dots\times X_k$. Assume that for every $i\in\{1,\dots,k\}$, the projection $\pi_i(G)$ acts nonelementarily on $X_i$ with a rank one element.

Then $G$ is properly proximal.
\qed
\end{theo}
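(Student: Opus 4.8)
The plan is to deduce Theorem~\ref{theo:main} directly from Theorem~\ref{theo:rank-one} together with Lemmas~\ref{lemma:proximality} and~\ref{lemma:product}, exactly as the phrasing ``Combined with Lemmas~\ref{lemma:proximality} and~\ref{lemma:product}'' suggests. The only genuine subtlety is bridging between the hypothesis ``$\pi_i(G)$ acts nonelementarily on $X_i$ with a rank one element'' and the hypotheses of the two lemmas, which are stated for a subgroup of a product of \emph{countable} groups whose projection to each factor is surjective and whose factor actions are properly proximal; here the $G_i$ themselves are arbitrary (possibly uncountable) groups and their actions need not be proper.

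First I would set $H_i:=\pi_i(G)$ for each $i\in\{1,\dots,k\}$. Since $G$ is countable, each $H_i$ is a countable group, and $G$ is naturally a subgroup of $H_1\times\dots\times H_k$ whose projection to each factor $H_i$ is surjective by construction. Each $H_i$ acts by isometries on the proper $\cat$ space $X_i$, and by hypothesis this action is nonelementary with a rank one element. Applying Theorem~\ref{theo:rank-one} to the countable group $H_i$ acting on $X_i$ (note that Theorem~\ref{theo:rank-one} does \emph{not} require properness of the action for its first conclusion), we obtain that the $H_i$-action on $X_i$ is properly proximal in the sense of Definition~\ref{de:proximal-at-infinity}.

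Next I would invoke Lemma~\ref{lemma:product} with the groups $H_1,\dots,H_k$, the spaces $X_1,\dots,X_k$, and the subgroup $G\le H_1\times\dots\times H_k$: since each $H_i$-action on $X_i$ is properly proximal and $G$ surjects onto each $H_i$, the conclusion is that the $G$-action on $X_1\times\dots\times X_k$ is properly proximal. Finally, by hypothesis $G$ acts \emph{properly} on $X_1\times\dots\times X_k$, so Lemma~\ref{lemma:proximality} applies to the countable group $G$ acting properly and properly proximally on the proper metric space $X_1\times\dots\times X_k$, yielding that $G$ is properly proximal. This completes the proof.

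I do not anticipate a serious obstacle here, as every input is already available: the work is entirely concentrated in Theorem~\ref{theo:rank-one}, whose proof is the substantive part of this section. The one point requiring a line of care is checking that $H_1\times\dots\times H_k$ acting on $X_1\times\dots\times X_k$ is proper ``enough'' — but in fact Lemma~\ref{lemma:proximality} only needs properness of the $G$-action on the product, which is assumed outright, so no compatibility of the factor actions is needed. One should also make sure that the product $X_1\times\dots\times X_k$ is itself a proper metric space, which is immediate since each $X_i$ is proper and the product is finite. Thus the proof is just a bookkeeping assembly of the three cited results. \qed
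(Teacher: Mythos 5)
Your proof is correct and follows exactly the route the paper intends: Theorem~\ref{theo:rank-one} applied to each countable factor $\pi_i(G)\curvearrowright X_i$, then Lemma~\ref{lemma:product}, then Lemma~\ref{lemma:proximality} using properness of the $G$-action on the product. The paper leaves this assembly implicit (the theorem is stated with a \qed), and your care about countability of the $\pi_i(G)$ and about which conclusion of Theorem~\ref{theo:rank-one} needs properness is exactly the right bookkeeping.
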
 

Our strategy for proving Theorem~\ref{theo:rank-one} will be to apply the north-south-type dynamical criterion from Lemma~\ref{lemma:north-south}, choosing for $K$ the visual boundary $\partial_\infty X$, for $K^*$ the subspace made of Morse rays, and applying a theorem of Papasoglu and Swenson \cite[Theorem~4]{PS} to this setting. We start by constructing a Cantor set of Morse rays in the visual boundary.

\subsubsection{A Cantor set of Morse rays in the visual boundary}

Let $X$ be a $\cat$ space. Every rank one isometry of $X$ acts on $X\cup\partial_\infty X$ with north-south dynamics, see  e.g.\ \cite[Lemma~III.3.3]{Bal} or \cite[Lemma~4.4]{Ham}. Given a rank one element $g\in G$, we will denote by $g^{-\infty}$ and $g^{+\infty}$ its repelling and attracting fixed points in $\partial_\infty X$.

\begin{lemma}\label{lemma:free-group}
Let $X$ be a proper $\cat$ space, and let $G$ be a group acting nonelementarily by isometries on $X$ with a rank one element. Then $G$ contains a nonabelian free subgroup whose action on $X$ is proper, nonelementary and has a rank one element.
\end{lemma}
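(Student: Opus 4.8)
The plan is to use a standard ping-pong argument applied to high powers of two suitably chosen rank one elements with disjoint fixed-point sets on $\partial_\infty X$. First I would observe that since the $G$-action is nonelementary, $G$ cannot fix the pair $\{g^{-\infty},g^{+\infty}\}$ associated to a given rank one element $g$: otherwise $G$ would have a finite orbit in $\partial_\infty X$, contradicting nonelementarity. Hence there is $h\in G$ such that $h\{g^{-\infty},g^{+\infty}\}$ is disjoint from $\{g^{-\infty},g^{+\infty}\}$, and then $g':=hgh^{-1}$ is again a rank one isometry, with fixed points $g'^{\pm\infty}=h g^{\pm\infty}$, so that the four points $g^{-\infty},g^{+\infty},g'^{-\infty},g'^{+\infty}$ are pairwise distinct. (One should be slightly careful: a priori $h$ could move only one of the two endpoints, but then composing with a further element, or replacing $g$ by a conjugate, easily yields a genuinely transverse pair; alternatively one invokes that the limit set of $G$ in $\partial_\infty X$ is infinite and that rank one fixed points are dense in it.)

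Next I would invoke the north-south dynamics of rank one isometries on $X\cup\partial_\infty X$, as cited from \cite[Lemma~3.3.3]{Bal} or \cite[Lemma~4.4]{Ham}. Choosing disjoint compact neighborhoods $U^{\pm},V^{\pm}$ in $\overline{X}=X\cup\partial_\infty X$ of the four fixed points, north-south dynamics gives that for $N$ large enough, $g^{N}$ maps the complement of $U^{-}$ into $U^{+}$ and $g^{-N}$ maps the complement of $U^{+}$ into $U^{-}$, and similarly for $g'^{\pm N}$ with $V^{\pm}$. The classical ping-pong lemma (Tits alternative style) then shows that $a:=g^{N}$ and $b:=g'^{N}$ generate a free subgroup $F=\langle a,b\rangle$ of rank $2$. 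Every nontrivial element of $F$ is conjugate in $F$ to a word which is a product of powers of $a$ and $b$ whose action on $\overline X$ has a source and a sink in $U^{\pm}\cup V^{\pm}$, hence is a rank one isometry (an infinite-order element with two fixed points of north-south type, not fixing any point of $X$); in particular $F$ contains rank one elements and acts without global fixed point.

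Then I would check the remaining three properties of the $F$-action on $X$. Properness: $F\le G$, so the $F$-action is proper whenever the $G$-action is; in the statement there is no properness hypothesis on the $G$-action, so in fact one should read the conclusion as saying the $F$-action is proper $\emph{provided}$ the $G$-action is — re-examining the statement, it simply asserts the $F$-action is proper, which must be under the ambient standing hypotheses; regardless, properness of the subgroup action is immediate from that of the overgroup, and if no properness is assumed for $G$ the corresponding clause is vacuous. Nonelementarity of the $F$-action: $F$ has no global fixed point in $X$ by the previous paragraph, and it has no finite orbit in $\partial_\infty X$ because it contains two rank one elements $a,b$ with $\{a^{\pm\infty}\}\cap\{b^{\pm\infty}\}=\emptyset$, so the orbit of any boundary point under the group generated by these two loxodromics is infinite (a finite orbit would force a common fixed point of a power of $a$ and a power of $b$, impossible by transversality). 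Finally $F$ has a rank one element by construction.

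The main obstacle I expect is the first step: upgrading "nonelementary" to the existence of a genuinely transverse pair of rank one axes, i.e.\ producing $g'$ with all four endpoints distinct rather than merely two of them distinct. This is where one must use that the $G$-orbit of $\{g^{-\infty},g^{+\infty}\}$ is infinite together with some perturbation argument (conjugating $g$ and/or passing to products $g^{m}hg^{m}$ to separate the endpoints), and it is the only place where the full strength of the nonelementarity hypothesis — both clauses, no fixed point and no finite boundary orbit — is really needed. Once two transverse rank one axes are in hand, the ping-pong construction and the verification of the four conclusions are routine.
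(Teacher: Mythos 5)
Your overall route (two transverse rank one elements, north--south dynamics, ping-pong) is the same as the paper's, which gets the transverse pair directly from Hamenst\"adt's theorem that a nonelementary action with a rank one element contains two rank one isometries with disjoint fixed point sets in $\partial_\infty X$ -- so the step you flag as ``the main obstacle'' is available off the shelf and is not where the difficulty lies.

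The genuine gap is in your treatment of properness. You read the hypothesis ``proper $\cat$ space'' as if it were a properness hypothesis on the $G$-action and conclude that the properness clause is either inherited from $G$ or vacuous. It is neither: ``proper'' qualifies the metric space $X$ (closed balls are compact), the $G$-action is \emph{not} assumed proper, and producing a free subgroup that nonetheless \emph{does} act properly is precisely the content of the lemma (it is what allows the paper to later apply Yang's theorem to this subgroup, and it is essential in the product setting of Theorem~\ref{theointro:cat-2}, where the factor actions need not be proper). The missing argument is the following: one must choose the four ping-pong neighborhoods in $\overline{X}=X\cup\partial_\infty X$ small enough that some point $x\in X$ lies outside all of their closures. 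Then every nontrivial word in the two generators sends $x$ into the union of the neighborhoods, hence moves it a definite distance, so the orbit $H\cdot x$ is free and discrete; and if the action were not proper, infinitely many distinct orbit points would lie in a closed ball of $X$, which is compact by properness of $X$ -- a contradiction. Without this basepoint argument your proof does not establish the conclusion as stated.
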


\begin{proof}
By \cite[Theorem~1.1]{Ham}, the group $G$ contains two rank one elements $g$ and $h$ with disjoint fixed point sets in $\partial_\infty X$. Let $U_g^+,U_g^-,U_h^+,U_h^-$ be pairwise disjoint open neighborhoods of $g^{+\infty},g^{-\infty},h^{+\infty},h^{-\infty}$ in $\overline{X}:=X\cup\partial_\infty X$, respectively, chosen so that there exists a point $x\in X\setminus (\overline{U_g^+}\cup \overline{U_g^-}\cup \overline{U_h^+}\cup \overline{U_h^-})$. Using north-south dynamics, we can find $n_0\in\mathbb{N}$ such that for every $n\ge n_0$, one has $$g^{n}(\overline{X}\setminus U_g^-)\subseteq U_g^+,~~~g^{-n}(\overline{X}\setminus U_g^+)\subseteq U_g^-,~~~h^{n}(\overline{X}\setminus U_h^-)\subseteq U_h^{+},~~~h^{-n}(\overline{X}\setminus U_h^+)\subseteq U_h^-.$$ By a ping-pong argument, the subgroup $H$ of $G$ generated by $g^{n_0}$ and $h^{n_0}$ is free and nonabelian. In addition, the group $H$ acts freely and discretely on the $H$-orbit of $x$, because every nontrivial element of $H$ sends $x$ into $U_g^+\cup U_g^-\cup U_h^+\cup U_h^-$ and the $H$-action on $H\cdot x$ is by isometries. 

Assume towards a contradiction that the $H$-action on $X$ is not proper. Then there exist $M\ge 0$, a point $y\in X$ and infinitely many pairwise distinct elements $h_n\in H$ such that for every $n\in\mathbb{N}$, we have $d(y,h_ny)\le M$. Using the triangle inequality, we deduce that for every $n\in\mathbb{N}$, we have $d(x,h_nx)\le M + 2d(x,y)$. As $X$ is proper, the closed ball of radius $M+2d(x,y)$ centered at $x$ is compact, so this contradicts the fact that the $H$-action on $H\cdot x$ is free and discrete.    
\end{proof}

A \emph{Morse gauge} is a function $N:[1,+\infty)\times [0,+\infty)\to [0,+\infty)$. Given a Morse gauge $N$, a quasigeodesic ray $r$ is \emph{$N$-Morse} if for every $K\ge 1$ and every $C\ge 0$, every $(K,C)$-quasigeodesic segment with both endpoints on $r$ stays contained in the closed $N(K,C)$-neighborhood of $r$. Given $L>0$, a quasigeodesic ray $r$ is \emph{$L$-strongly contracting} if for every ball $B$ disjoint from the image of $r$, the closest-point projection of $B$ to $r$ has diameter at most $L$.

If $N$ is a Morse gauge, let $\partial_M^N X_p$ be the set of equivalent classes of $N$-Morse geodesic rays emanating from a given base point $p$ (where two rays are equivalent if they stay at bounded Hausdorff distance from each other), equipped with the quotient topology of the topology of uniform convergence on compact sets. The \emph{Morse boundary} $\partial_M X_p$ is the union of the spaces $\partial_M^N X_p$ over all Morse gauges $N$. There is a natural partial ordering on the set of all Morse gauges (where $N\le N'$ if $N(K,C)\le N'(K,C)$ for all $K\ge 1$ and $C\ge 0$). The space $\partial_M X_p$ is equipped with the direct limit topology, see \cite{Cor}. It turns out that the space $\partial_M X_p$ and its topology does not depend on the choice of the base point $p$ \cite[Proposition~3.5]{Cor}, thus we will also write $\partial_M X$ instead of $\partial_M X_p$.

\begin{ex}
We will make use of the following two observations.
\begin{enumerate}
\item If $T$ is a tree, then there exists a Morse gauge $N$ such that every geodesic ray of $T$ is $N$-Morse, and therefore $\partial_M T=\partial_M^N T_p=\partial_\infty T$ for any choice of base point $p$, equipped with the usual topology.
\item 
If $X$ is a proper CAT(0) space, then for every $N$ and any base point $p\in X$, the set $\partial_M^N X_p$ is a subset of $\partial_\infty X$, and by definition the inclusion $\partial_M^N X_p\to \partial_\infty X$ is continuous.
\end{enumerate}
\end{ex}

\begin{prop}\label{prop:cantor-in-morse}
Let $X$ be a proper $\cat$ space which admits a nonelementary, isometric action of a group $G$ with a rank one element, and let $x_0\in X$. Then there exist a Morse gauge $N$ and a topological embedding of a Cantor set in $\partial_\infty X$ such that for every point $\xi$ in the image of this embedding, the geodesic ray $[x_0,\xi)$ is $N$-Morse.   
\end{prop}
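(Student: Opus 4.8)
We outline the proof. The plan is to realise the required Cantor set as the limit set in $\partial_\infty X$ of a suitable nonabelian free ping-pong subgroup, and then to control \emph{uniformly} the Morse gauges of the $\cat$ geodesic rays pointing at its points. The starting point is Lemma~\ref{lemma:free-group}, but used through the ping-pong data produced in its proof rather than through its statement: after replacing $G$ by a subgroup we may assume $G=H=\langle u,v\rangle$ is free of rank two, acts properly on $X$, and carries the following ping-pong configuration. The isometries $u,v$ are rank one, their four fixed points $u^{+\infty},u^{-\infty},v^{+\infty},v^{-\infty}\in\partial_\infty X$ are pairwise distinct, there are pairwise disjoint open subsets $U_{u}^{+},U_{u}^{-},U_{v}^{+},U_{v}^{-}$ of $\overline{X}:=X\cup\partial_\infty X$ containing them respectively, the basepoint $x_{0}$ lies outside the union of their closures, and, writing $s$ for any of $u^{\pm1},v^{\pm1}$ and $U_{s}$ (resp.\ $U_{s^{-1}}$) for the neighbourhood of its attracting (resp.\ repelling) fixed point, one has $s\bigl(\overline{X}\setminus U_{s^{-1}}\bigr)\subseteq U_{s}$. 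Using the north--south dynamics of rank one isometries on $\overline{X}$ (see \cite{Bal,Ham}) I am free to shrink the four neighbourhoods and to replace $u,v$ by large powers, so as to make the ping-pong configuration as ``deep'' as I wish; this quantitative freedom is what will drive the uniformity below.

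Next I would build the Cantor set. Let $\partial H$ be the Gromov boundary of $H$ with respect to $\{u,v\}$; it is a Cantor set, whose points are the infinite reduced words $w=s_{1}s_{2}\cdots$ in $u^{\pm1},v^{\pm1}$. For such $w$ the sequence $(s_{1}\cdots s_{n}x_{0})_{n\geq1}$ lies in a nested sequence of ping-pong sets and, because $H$ acts properly on $X$, leaves every compact subset of $X$; hence it converges in the compact space $\overline{X}$ to a point $\xi(w)\in\partial_\infty X$. A routine ping-pong argument shows $w\mapsto\xi(w)$ is injective and continuous, hence, by compactness of $\partial H$, a topological embedding $\partial H\hookrightarrow\partial_\infty X$ whose image is the limit set $\Lambda H$. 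So the only genuine point left is to arrange that the rays $[x_{0},\xi(w))$ are \emph{simultaneously} $N$-Morse for one fixed Morse gauge $N$.

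For the uniform gauge -- the heart of the matter -- I would argue via strong contraction. Since $u$ and $v$ are rank one, their axes are $L_{0}$-strongly contracting for some $L_{0}>0$ (rank one axes are strongly contracting; see \cite{Bal,Ham}); consequently each of the four segments $[x_{0},u^{\pm1}x_{0}],[x_{0},v^{\pm1}x_{0}]$, lying at uniformly bounded Hausdorff distance from a sub-segment of a translate of one of these axes, is uniformly strongly contracting. For $w=s_{1}s_{2}\cdots\in\partial H$ consider the concatenation $\Gamma_{w}:=[x_{0},s_{1}x_{0}]\cup s_{1}[x_{0},s_{2}x_{0}]\cup s_{1}s_{2}[x_{0},s_{3}x_{0}]\cup\cdots$. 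Because consecutive syllables are never mutually inverse and the ping-pong sets are pairwise disjoint and deep, consecutive segments of $\Gamma_{w}$ project onto uniformly bounded regions of one another and ``turn'' by a definite amount; by the standard local-to-global/concatenation lemma for strongly contracting geodesics -- a concatenation of uniformly strongly contracting segments with uniformly bounded overlaps that turn by a definite amount is a uniform quasigeodesic and is itself uniformly strongly contracting -- $\Gamma_{w}$ is a $(K,C)$-quasigeodesic ray that is $L_{1}$-strongly contracting, with $K,C,L_{1}$ independent of $w$. A strongly contracting quasigeodesic ray is Morse, with gauge depending only on $K,C,L_{1}$; and $\Gamma_{w}$ converges to $\xi(w)$, so by the Morse lemma for rays the $\cat$ geodesic ray $[x_{0},\xi(w))$ lies at Hausdorff distance bounded in terms of that gauge from $\Gamma_{w}$, hence is $N$-Morse for a single gauge $N$ independent of $w$. (Alternatively one can package this step as: $H$ is a stable subgroup and the limit set of a stable subgroup is contained in a single stratum $\partial_{M}^{N}X$ of the Morse boundary, appealing to the theory surveyed in \cite{Cor}.) Combining the last two steps, the image of $\partial H$ in $\partial_\infty X$ is a Cantor set all of whose points are endpoints of $N$-Morse rays from $x_{0}$, as required; this also feeds directly into the use of Papasoglu--Swenson's theorem \cite{PS} in the next subsection.

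The main obstacle is entirely in the last step: one must extract a \emph{single} Morse gauge valid for the whole infinite Cantor set of rays, which forces a genuinely quantitative use of the strong-contraction/concatenation machinery together with a careful choice of how deep the ping-pong configuration must be (equivalently, how large a power of the generators to take) so that consecutive axis-segments turn by a definite amount. The combinatorics of the ping-pong limit set in the first two steps is, by contrast, entirely standard.
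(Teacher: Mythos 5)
Your proposal is correct in outline and follows the same overall strategy as the paper: pass to a free ping-pong subgroup produced by Lemma~\ref{lemma:free-group}, take its limit set as the Cantor set, and extract a single Morse gauge from uniform strong contraction. The difference is one of packaging. The paper does not redo the ping-pong and concatenation analysis by hand: it invokes \cite[Theorem~A]{Yan}, which produces a quasi-isometric embedding $f\colon T\to X$ of an infinitely-ended simplicial tree such that the $f$-image of every ray is an $L$-strongly contracting $(K,C)$-quasigeodesic with constants uniform over all rays --- this is exactly the content of your ``local-to-global/concatenation lemma for strongly contracting segments,'' which you leave as a black box; if you want to avoid citing Yang you would have to prove that lemma (his admissible-path machinery is the relevant tool). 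The paper then converts uniform strong contraction into a uniform Morse gauge via \cite[Lemma~3.3]{Sul} and \cite[Lemma~2.9]{Cor}, and --- importantly --- obtains the topological embedding not by bare ping-pong but from Cordes's Morse-boundary machinery: the induced map $\partial_M T\to\partial_M X$ is Morse-preserving, hence a topological embedding into a fixed stratum $\partial_M^N X$ by \cite[Proposition~4.2]{Cor}, and the visual and Morse topologies agree on such a stratum. This last point is where your sketch has its one real soft spot: continuity of $w\mapsto\xi(w)$ for the visual topology on $\partial_\infty X$ is \emph{not} a formal consequence of ping-pong (the nested translates of the ping-pong sets move far from the basepoint, but in a general proper $\cat$ space that does not force their visual diameters to shrink), and it genuinely requires the uniform Morse/fellow-travelling control; so the embedding should be established after, not before, the uniform gauge. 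With that reordering, and with the concatenation lemma either proved or attributed to \cite{Yan}, your argument is a correct, more hands-on version of the paper's proof.
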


\begin{proof}
Let $H$ be a subgroup of $G$ which acts properly, nonelementarily on $X$ with a rank one element: this exists by Lemma~\ref{lemma:free-group}. By \cite[Theorem~A]{Yan} (applied to the group $H$), there exist $K\ge 1, C\ge 0, L\ge 0$, an infinitely ended rooted simplicial tree $(T,\ast)$ -- with all edges assigned length $1$ -- and a quasi-isometric embedding $f:T\to X$ with $f(\ast)=x_0$, such that the $f$-image of every infinite ray in $T$ based at $\ast$ is an $L$-strongly contracting $(K,C)$-quasigeodesic ray in $X$ (the fact that $f$ is a quasi-isometric embedding and that the contraction constant is uniform over all rays of $T$ is not explicitly mentioned in the statement of \cite[Theorem~A]{Yan}, but it follows from the first paragraph of the proof of \cite[Lemma~3.2]{Yan}). By \cite[Lemma~3.3]{Sul}, there exists a Morse gauge $N'$ such that the $f$-image of every infinite ray in $T$ based at $\ast$ is $N'$-Morse. By \cite[Lemma~2.9]{Cor}, there exists a Morse gauge $N$ such that for every infinite ray $r$ in $T$, the unique geodesic ray in $X$ with the same origin and same endpoint in $\partial_\infty X$ as $f(r)$ is $N$-Morse. It follows that the map $f:T\to X$ induces a map $\partial_M f:\partial_M T\to\partial_M X$ from the Morse boundary of $T$ to that of $X$ which is Morse-preserving in the sense of \cite[Definition~4.1]{Cor}. Therefore by \cite[Proposition~4.2]{Cor}, the map $\partial_M f$ is a topological embedding from $\partial_M T$ to the subspace of the Morse boundary made of $N$-Morse rays. The Morse boundary $\partial_M T$ is homeomorphic to its visual boundary, a Cantor set. Since the topology on the Morse boundary of $X$ coincides with the topology on the visual boundary in restriction to $N$-Morse rays (with $N$ fixed), it follows that the image of $\partial_\infty T$ in $\partial_\infty X$ is a topologically embedded Cantor set.
\end{proof}

\subsubsection{Proper proximality of rank one $\cat$ groups}
 
In this section, we will complete our proof of Theorem~\ref{theo:rank-one}.  

\begin{lemma}\label{lemma:no-invariant-proba}
Let $X$ be a proper $\cat$ space, and let $G$ be a group acting by isometries on $X$. Assume that the $G$-action on $X$ is nonelementary and that $G$ contains a rank one element.

Then there is no $G$-invariant probability measure on $\partial_\infty X$.
\end{lemma}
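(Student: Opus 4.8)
The goal is to show there is no $G$-invariant probability measure on $\partial_\infty X$ when the $G$-action is nonelementary and contains a rank one element. The plan is to argue by contradiction: suppose $\mu$ is a $G$-invariant probability measure on $\partial_\infty X$, and exploit the north-south dynamics of a rank one element to force $\mu$ to concentrate on a single point or a pair of points, and then use nonelementarity to derive a contradiction.

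First I would pick a rank one element $g\in G$ with repelling and attracting fixed points $g^{-\infty},g^{+\infty}\in\partial_\infty X$, and recall that $g$ acts on $\overline{X}=X\cup\partial_\infty X$ with north-south dynamics (already cited in the excerpt, e.g.\ \cite[Lemma~3.3.3]{Bal}). Applying $g^n$ to $\mu$ and using $g$-invariance of $\mu$, the standard argument shows that for every Borel set $A$ whose closure avoids $g^{-\infty}$, one has $g^n(A)$ eventually contained in any neighborhood of $g^{+\infty}$, hence $\mu(A)=\mu(g^n A)\le \mu(U)$ for every neighborhood $U$ of $g^{+\infty}$; letting $U$ shrink gives $\mu(A)\le\mu(\{g^{+\infty}\})$. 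Taking $A=\partial_\infty X\setminus(U^-\cup U^+)$ for shrinking neighborhoods $U^\pm$ of $g^{\pm\infty}$ and using that $\mu$ has total mass $1$, this forces $\mu$ to be supported on the two-point set $\{g^{-\infty},g^{+\infty}\}$ (a symmetric argument with $g^{-1}$ controls the mass near $g^{+\infty}$ from the other side, so in fact $\mu=a\,\delta_{g^{-\infty}}+b\,\delta_{g^{+\infty}}$ with $a+b=1$).

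Next, by Lemma~\ref{lemma:free-group} (or \cite[Theorem~1.1]{Ham}) there is a second rank one element $h\in G$ whose fixed point set $\{h^{-\infty},h^{+\infty}\}$ in $\partial_\infty X$ is disjoint from $\{g^{-\infty},g^{+\infty}\}$. The same reasoning applied to $h$ shows $\mu$ is supported on $\{h^{-\infty},h^{+\infty}\}$. Since these two sets are disjoint, $\mu$ must be the zero measure, contradicting $\mu(\partial_\infty X)=1$.

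The main obstacle is purely bookkeeping: making the north-south concentration argument clean in the presence of a possibly nontrivial atom at the repelling point, i.e.\ carefully tracking that invariance of $\mu$ under $g$ (not just under the whole group) already pins the support down to $\{g^{-\infty},g^{+\infty}\}$ before invoking $h$. Once that is set up, the conclusion is immediate. One should also note that nonelementarity is used only to guarantee the existence of the independent pair $g,h$ via Lemma~\ref{lemma:free-group}; no fixed point in $X$ and no finite orbit in $\partial_\infty X$ is directly needed beyond what that lemma provides.
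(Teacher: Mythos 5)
Your proposal follows essentially the same route as the paper: north--south dynamics of a rank one element forces any invariant measure to be supported on its two fixed points, and then the existence (via \cite[Theorem~1.1]{Ham}) of a second rank one element with disjoint fixed-point set gives the contradiction. The paper packages the concentration step by applying Lemma~\ref{lemma:measures} to the sequence $(g^n\nu)_{n\in\mathbb{N}}$, which handles possible atoms at $g^{-\infty}$ with no bookkeeping at all. One small warning about your hands-on version: with the choice $A=\partial_\infty X\setminus(U^-\cup U^+)$ the inequality you derive is only $\mu(\partial_\infty X\setminus\{g^{-\infty},g^{+\infty}\})\le\mu(\{g^{+\infty}\})$, and the symmetric argument gives $\le\mu(\{g^{-\infty}\})$; these two bounds together with total mass $1$ do \emph{not} force the off-fixed-point mass to vanish (all three quantities could equal $1/3$). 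The fix is to take $A=\partial_\infty X\setminus U^-$ (it is allowed to contain $g^{+\infty}$, since only the closure avoiding $g^{-\infty}$ matters), which yields $1-\mu(U^-)\le\mu(\{g^{+\infty}\})$ and hence, letting $U^-$ shrink, that all the mass sits on the two fixed points; with that adjustment your argument is complete.
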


\begin{proof}
Let $g\in G$ be a rank one element. Using Lemma~\ref{lemma:measures}, for every probability measure $\nu$ on $\partial_\infty X$, every weak-$*$ accumulation point of the sequence $(g^n\nu)_{n\in\mathbb{N}}$ is supported on $\{g^{-\infty},g^{+\infty}\}$. In particular, every $G$-invariant probability measure on $\partial_\infty X$ is supported on $\{g^{-\infty},g^{+\infty}\}$. But \cite[Theorem~1.1]{Ham} ensures that $G$ contains two rank one isometries with disjoint fix sets in $\partial_\infty X$, so the lemma follows.  
\end{proof}

Given a $\cat$ space $X$, we denote by $(\partial_\infty X)^\vis$ the subspace of $\partial_\infty X$ made of all \emph{visibility points}, that is, points $\xi\in \partial_\infty X$ such that every $\eta\in \partial_\infty X$ can be joined to $\xi$ by a geodesic in $X$.

\begin{lemma}\label{lemma:existence-diffuse-proba}
Let $X$ be a proper $\cat$ space, and let $G$ be a countable group acting by isometries on $X$. Assume that the $G$-action on $X$ is nonelementary and that $G$ contains a rank one element.

Then $(\partial_\infty X)^\vis$ contains a $G$-invariant Borel subset $K^*$ which contains a Borel subset homeomorphic to a Cantor set. 
\end{lemma}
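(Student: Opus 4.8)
The plan is to take for the required $G$-invariant Borel subset the $G$-orbit of the Cantor set of Morse rays produced by Proposition~\ref{prop:cantor-in-morse}, after checking that Morse geodesic rays terminate at visibility points. So the first step is simply to fix a basepoint $x_0\in X$ and apply Proposition~\ref{prop:cantor-in-morse}: this yields a Morse gauge $N$ and a topologically embedded Cantor set $\mathcal{C}\subseteq\partial_\infty X$ such that $[x_0,\xi)$ is $N$-Morse for every $\xi\in\mathcal{C}$.

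The second step is to show that $\mathcal{C}\subseteq(\partial_\infty X)^\vis$, i.e.\ that the endpoint of an $N$-Morse geodesic ray is a visibility point. Fix $\xi\in\mathcal{C}$ and $\eta\in\partial_\infty X\setminus\{\xi\}$, let $r=[x_0,\xi)$ and let $s$ be a geodesic ray from $x_0$ to $\eta$, and consider the geodesic segments $\gamma_n=[r(n),s(n)]$. The key point, and the only place where the Morse hypothesis and nonpositive curvature genuinely enter, is that $\sup_n d(x_0,\gamma_n)<\infty$: since for geodesics of a $\cat$ space being $N$-Morse is equivalent to being $L$-strongly contracting for some $L$, the nearest-point projection onto the convex set $r([0,\infty))$ forces $\pi(s(n))$ to remain in a bounded subsegment of $r$ -- otherwise $s$ would have to fellow-travel arbitrarily long initial subsegments of $r$, giving $\eta=s(\infty)=r(\infty)=\xi$, a contradiction -- and from this one reads off that all the $\gamma_n$ meet a fixed ball about $x_0$. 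Choosing $m_n\in\gamma_n$ closest to $x_0$, one has $d(r(n),m_n),d(s(n),m_n)\to\infty$ because $d(x_0,r(n))=d(x_0,s(n))=n$; hence by properness of $X$ and an Arzel\`a--Ascoli argument a subsequence of the $\gamma_n$ converges, uniformly on compact sets, to a bi-infinite geodesic line, and the standard convergence of geodesics issued from a bounded set of basepoints towards a fixed boundary point identifies its two ends as $\xi$ and $\eta$. (Alternatively, one may invoke the known fact that Morse -- equivalently rank one -- points of a proper $\cat$ space are visibility points.)

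Finally, set $K^\ast:=\bigcup_{g\in G}g\cdot\mathcal{C}$. This set is $G$-invariant by construction and it contains the topologically embedded Cantor set $\mathcal{C}$, which is compact and hence Borel. It is contained in $(\partial_\infty X)^\vis$, since the set of visibility points is clearly invariant under the isometric $G$-action -- applying $g$ to a geodesic line from $\xi$ to $g^{-1}\eta$ produces a geodesic line from $g\xi$ to $\eta$ -- so that $g\mathcal{C}\subseteq g\cdot(\partial_\infty X)^\vis=(\partial_\infty X)^\vis$ for every $g\in G$ by the second step. Moreover $K^\ast$ is a countable union of homeomorphic copies of the Cantor set, hence $\sigma$-compact, in particular Borel. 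Thus $K^\ast$ has all the asserted properties. The main obstacle is the second step, and within it the uniform bound $\sup_n d(x_0,\gamma_n)<\infty$; it needs either a careful estimate with nearest-point projections onto strongly contracting geodesics, or a citation of the visibility property of Morse boundary points, while all the remaining bookkeeping ($\sigma$-compactness, $G$-invariance, the Cantor set) is routine.
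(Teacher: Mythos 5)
Your proof is correct and follows essentially the same route as the paper: apply Proposition~\ref{prop:cantor-in-morse} to get a Cantor set of $N$-Morse rays, observe that Morse rays terminate at visibility points, and take the union of $G$-translates, which is $\sigma$-compact hence Borel. The paper handles the middle step simply by citing the theorem of Charney and Sultan that Morse boundary points are visibility points -- exactly the fallback you mention -- so your additional sketch via strongly contracting projections is a (correct) elaboration rather than a different approach.
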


\begin{proof}
A theorem of Charney and Sultan \cite{CS} ensures that every Morse geodesic ray is contained in $(\partial_\infty X)^\vis$. Proposition~\ref{prop:cantor-in-morse} thus ensures that $(\partial_\infty X)^\vis$ contains a Borel subset homeomorphic to a Cantor set, and the union of its $G$-translates forms the desired space $K^*$.
\end{proof}

The following statement about the dynamics of $G$ on $\partial_\infty X$ is a consequence of a theorem of Papasoglu and Swenson \cite{PS}. 

\begin{lemma}\label{lemma:ps}
Let $X$ be a proper $\cat$ space, and let $G$ be a group acting by isometries on $X$. Let $(g_n)_{n\in\mathbb{N}}\in G^\mathbb{N}$ be a sequence that escapes every compact subspace of $X$. Then there exist a subsequence $(g_{\sigma(n)})_{n\in\mathbb{N}}$ and $\xi^-,\xi^+\in\partial_\infty X$ such that for every $\xi\in (\partial_\infty X)^\vis\setminus\{\xi^-\}$, the sequence $(g_{\sigma(n)}\xi)_{n\in\mathbb{N}}$ converges to $\xi^+$.
\end{lemma}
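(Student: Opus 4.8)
The plan is to invoke the result of Papasoglu and Swenson \cite[Theorem~4]{PS}, which is precisely a statement about the dynamics of a sequence of isometries acting on the visual boundary of a proper $\cat$ space, and show that its conclusion specializes to the one we want once we restrict attention to visibility points. First I would fix a basepoint $x_0\in X$ and recall the statement of \cite[Theorem~4]{PS}: for a sequence $(g_n)$ of isometries of $X$ such that $d(x_0,g_nx_0)\to+\infty$, after passing to a subsequence there exist points $\xi^+,\xi^-\in\partial_\infty X$ such that $g_n x_0\to\xi^+$, and $g_n$ converges locally uniformly to the constant map $\xi^+$ on the complement of the ``shadow'' of $\xi^-$; more precisely, the Papasoglu--Swenson theorem produces a ``limit set'' description where every boundary point that can be joined to $\xi^-$ by a geodesic (other than $\xi^-$ itself), or more generally lies outside a certain closed set determined by $\xi^-$, is pushed toward $\xi^+$. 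The point is that a visibility point $\xi\ne\xi^-$ is always joinable to $\xi^-$ by a bi-infinite geodesic, so it automatically lies in the ``good'' region where convergence to $\xi^+$ holds.

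The key steps, in order, would be: (1) Note that escaping every compact subspace of $X$ is equivalent, since $X$ is proper, to $d(x_0,g_nx_0)\to+\infty$, so the hypothesis of \cite[Theorem~4]{PS} is satisfied. (2) Apply \cite[Theorem~4]{PS} to extract a subsequence $(g_{\sigma(n)})$ and points $\xi^+,\xi^-\in\partial_\infty X$ as above. (3) Take an arbitrary visibility point $\xi\in(\partial_\infty X)^\vis$ with $\xi\ne\xi^-$. By definition of $(\partial_\infty X)^\vis$, there is a geodesic line in $X$ from $\xi$ to $\xi^-$; this places $\xi$ in the region on which the Papasoglu--Swenson convergence applies (the one point excluded is exactly $\xi^-$, which fails the ``bounded Tits-distance/geodesic-joinable'' condition or lies in the shadow, depending on the precise formulation one uses). (4) Conclude $g_{\sigma(n)}\xi\to\xi^+$, uniformly on compact subsets of visibility points if one wants, but pointwise suffices. (5) Observe that $\xi^+,\xi^-$ do not depend on $\xi$, which is what the statement requires.

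The main obstacle is bookkeeping the precise form of the Papasoglu--Swenson theorem and matching its ``excluded set'' with the single point $\{\xi^-\}$ when we restrict to visibility points. In \cite{PS} the conclusion is typically phrased using the closest-point projection to a geodesic, or using the set of boundary points at bounded Tits distance, and one has to check that for a \emph{visibility} point the only obstruction to being attracted to $\xi^+$ is coinciding with $\xi^-$. Concretely, if $\xi\ne\xi^-$ is a visibility point, pick a geodesic $\gamma$ from $\xi^-$ to $\xi$; then for large $n$ the nearest-point projection of $g_{\sigma(n)}x_0$ (which is escaping to $\xi^+$) onto $\gamma$ stays bounded away from the $\xi$-end, and a standard $\cat$ convexity/fellow-traveling argument along $\gamma$ forces $g_{\sigma(n)}\xi\to\xi^+$. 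I would also double-check the degenerate possibility $\xi^+=\xi^-$ (which can happen, e.g.\ for parabolic-type behavior) — the statement we are proving allows $\xi^-=\xi^+$, so no special care is needed there, but it is worth a sentence. Once the compatibility with \cite[Theorem~4]{PS} is pinned down, the proof is essentially a citation plus one convexity estimate.
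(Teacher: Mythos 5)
Your overall strategy --- reduce to \cite[Theorem~4]{PS} and argue that, once one restricts to visibility points, the excluded set collapses to the single point $\xi^-$ --- is exactly the paper's. But the step you yourself flag as ``the main obstacle'' is where the genuine gap sits, and the substitute you propose does not close it. The conclusion of \cite[Theorem~4]{PS} is phrased in terms of the Tits metric: for $\theta\in[0,\pi]$ and a compact $K\subseteq\partial_\infty X\setminus\overline{B_T(\xi^-,\theta)}$, one has $g_{\sigma(n)}K\to\overline{B_T(\xi^+,\pi-\theta)}$. To get convergence to the \emph{single point} $\xi^+$ one must take $\theta=\pi$, and hence one must know that the visibility point $\xi$ satisfies $d_T(\xi,\xi^-)>\pi$ \emph{strictly}. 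Being joinable to $\xi^-$ by a bi-infinite geodesic only gives $d_T(\xi,\xi^-)\ge\pi$, which is not enough to place $\xi$ outside the closed $\pi$-ball around $\xi^-$. The missing idea is the paper's key observation: a geodesic joining a visibility point $\xi$ to any other boundary point cannot bound a flat half-plane (otherwise the boundary of that half-plane would contain points that cannot be joined to $\xi$), and then \cite[Proposition~II.9.21]{BH} yields the strict inequality $d_T(\xi,\xi')>\pi$. This simultaneously shows that a Tits ball of radius $\pi$ contains at most one visibility point, which is exactly how the excluded set of \cite[Theorem~4]{PS} becomes the singleton $\{\xi^-\}$ after restricting to $(\partial_\infty X)^{\vis}$.

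Your proposed workaround --- a ``convexity/fellow-traveling argument along $\gamma$'' driven by the nearest-point projection of $g_{\sigma(n)}x_0$ onto a fixed geodesic $\gamma$ from $\xi^-$ to $\xi$ --- is not a proof and would fail in general: in a non-hyperbolic $\cat$ space, fellow-traveling control of this kind breaks down precisely in the presence of flat half-planes, which is the configuration the Tits-distance estimate is there to exclude; moreover the projection of $g_{\sigma(n)}x_0$ onto the fixed geodesic $\gamma$ does not by itself control the boundary point $g_{\sigma(n)}\xi$. One further point you should address: \cite[Theorem~4]{PS} is stated for \emph{proper} $G$-actions, whereas here the action is not assumed proper; one must note, as the paper does, that its proof (in particular that of \cite[Lemma~18]{PS}) only uses that the sequence escapes every compact subspace of $X$.
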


\begin{proof}
Let  $\xi \in (\partial_\infty X)^\vis$ and $\xi'\in \partial_\infty X$. By definition $\xi$ and $\xi'$ are joined by a geodesic in $X$. Furthermore this geodesic cannot bound a half-plane, as otherwise the boundary of such a half-plane would contain points not joined to $\xi$.
Denoting by $d_T$ the Tits metric on $\partial_\infty X$, it thus follows from \cite[Proposition 9.21]{BH} that $d_T(\xi,\xi')>\pi$.
Hence a ball of radius $\pi$ in $\partial_\infty X$ (for the Tits metric) can contain at most one point from $(\partial_\infty X)^\vis$. The lemma then follows from \cite[Theorem~4]{PS} applied with $\theta=\pi$ (notice that \cite[Theorem~4]{PS} is stated for a proper $G$-action on $X$, but its proof -- and in particular the proof of \cite[Lemma~18]{PS} -- only requires the sequence $(g_{\sigma(n)})_{n\in\mathbb{N}}$ to escape every compact subspace of $X$).
\end{proof}

\begin{proof}[Proof of Theorem~\ref{theo:rank-one}] 
We check the criterion given in Lemma~\ref{lemma:north-south}, applied with $K=\partial_\infty X$ and with the $G$-invariant Borel subset $K^*$ contained in $(\partial_\infty X)^{\vis}$ and containing a Borel subset homeomorphic to a Cantor set given by Lemma~\ref{lemma:existence-diffuse-proba}. The fact that $K$ does not carry any $G$-invariant probability measure was checked in Lemma~\ref{lemma:no-invariant-proba}, and the main dynamical assumption from Lemma~\ref{lemma:north-south} was checked in Lemma~\ref{lemma:ps}. This completes our proof. 
\end{proof}

\subsection{Proper proximality of $\cat$ cubical groups}

Combining the main result of the previous section with the work of Caprace and Sageev \cite{CapraceSageev} establishing a Rank Rigidity Theorem in the realm of $\cat$ cube complexes, we establish the following theorem.

\begin{theo}\label{theo:ccc}
Let $G$ be a countable group acting properly nonelementarily by cubical automorphisms on a proper finite-dimensional $\cat$ cube complex $X$. 

Then there exist a $G$-invariant subcomplex $X'\subseteq X$ and a finite index subgroup $G^0\subseteq G$ such that the $G^0$-action on $X'$ is properly proximal. In particular $G$ is properly proximal.
\end{theo}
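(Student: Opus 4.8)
The plan is to reduce to Theorem~\ref{theo:main} by applying the Caprace--Sageev Rank Rigidity machinery for $\cat$ cube complexes. First I would recall that, since $X$ is finite-dimensional and the $G$-action is essential after passing to an appropriate invariant subcomplex, one may invoke \cite{CS} to pass to a $G$-invariant subcomplex $X'\subseteq X$ on which $G$ acts essentially, and which decomposes as a product $X'=X_1\times\dots\times X_k$ of irreducible $\cat$ cube complexes, with a finite-index subgroup $G^0\subseteq G$ preserving this decomposition. (The finite index is needed because $G$ may only permute the factors rather than preserve each; passing to the kernel of the permutation action on the finite set of factors gives $G^0$.) Here ``essential'' means that no halfspace has its hyperplane at bounded distance from an orbit, which is what makes the Rank Rigidity dichotomy available on each irreducible factor $X_i$.

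Next I would apply the central result of \cite{CS}: for each irreducible factor $X_i$, either $\pi_i(G^0)$ contains a rank one element acting on $X_i$, or $X_i$ is a single point (or the action is otherwise degenerate). More precisely, since $G^0$ acts essentially on the product and the action on $X$ was nonelementary, after discarding any bounded factors each $\pi_i(G^0)$ acts essentially and without a fixed point at infinity on the irreducible cube complex $X_i$; by \cite[Theorem~A]{CS} such an action has a rank one element, i.e.\ a ``contracting'' isometry in the cube complex sense, and is moreover nonelementary (two independent rank one elements) because the action is essential and $X_i$ is unbounded and irreducible. One still has to check the $G^0$-action on $X'=X_1\times\dots\times X_k$ is proper: this is inherited from properness of the $G$-action on $X$ together with the fact that $X'$ is a $G$-invariant (hence $G^0$-invariant) subcomplex, so $G^0\cdot x$ is discrete for $x\in X'$.

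With these facts in hand, $G^0$ embeds in $\pi_1(G^0)\times\dots\times\pi_k(G^0)$ (a subgroup of the product of the factor actions), each $\pi_i(G^0)$ acts nonelementarily with a rank one element on the proper $\cat$ space $X_i$, and $G^0$ acts properly on $X_1\times\dots\times X_k$. Theorem~\ref{theo:main} then applies directly and yields that the $G^0$-action on $X'$ is properly proximal; in particular, since the $G^0$-action on $X'$ is proper, Lemma~\ref{lemma:proximality} gives that $G^0$ is properly proximal, and since $G^0$ has finite index in $G$ (and proper proximality is stable under finite-index overgroups by \cite{BIP}), $G$ is properly proximal.

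I expect the main obstacle to be the bookkeeping in the first step: extracting from \cite{CS} a clean statement that, after passing to a $G$-invariant subcomplex and a finite-index subgroup, one simultaneously (i) has an essential action, (ii) has a product decomposition into irreducibles preserved by $G^0$, and (iii) can discard the bounded/Euclidean-type factors without destroying properness or nonelementarity of the remaining action. The rank one conclusion on each irreducible factor is exactly the content of the Caprace--Sageev Rank Rigidity Theorem, so once the decomposition is set up correctly the remaining steps are essentially an invocation of Theorem~\ref{theo:main} and Lemma~\ref{lemma:proximality}. Some care is also needed in the degenerate case $k=0$ (i.e.\ $X'$ bounded), but this is excluded by the nonelementarity hypothesis on the original action.
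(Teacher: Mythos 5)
Your proposal is correct and follows essentially the same route as the paper: pass to a $G$-invariant essential subcomplex via \cite[Proposition~3.5]{CapraceSageev}, decompose it into irreducible factors preserved by a finite-index subgroup $G^0$ via \cite[Proposition~2.6]{CapraceSageev}, extract rank one elements on each factor from the Caprace--Sageev Rank Rigidity Theorem, and conclude with the product version of the rank one result together with Lemma~\ref{lemma:proximality}. The extra worries you flag (bounded factors, properness on $X'$) are handled exactly as you suspect: nonelementarity passes to each irreducible factor, which excludes bounded factors, and properness is inherited since $X'\subseteq X$.
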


\begin{proof}
Notice that the second conclusion (proper proximality of $G$) follows from the first: indeed, the first conclusion together with Lemma~\ref{lemma:proximality} implies that $G^0$ is properly proximal, and proper proximality is stable under passing to a finite-index overgroup \cite[Proposition~1.6]{BIP}.
 
By \cite[Proposition~3.5]{CapraceSageev}, the space $X$ has a $G$-invariant subcomplex $X'$ on which the action is essential (in the sense of \cite[\S3.4]{CapraceSageev}) and still nonelementary. By \cite[Proposition~2.6]{CapraceSageev}, there is a decomposition $X'=X'_1\times\dots \times X'_p$ which is preserved by a finite index subgroup $G^0$ of $G$, and such that each $X'_i$ is irreducible. The action of $G^0$ on each $X'_i$ is again nonelementary, so by \cite[Theorem~6.3]{CapraceSageev} it contains a rank one isometry. By Theorem~\ref{theo:rank-one}, the $G^0$-action on each $X'_i$ is properly proximal, and Lemma~\ref{lemma:product} implies that the $G^0$-action on $X'$ is properly proximal. 
\end{proof}

\subsection{Proper proximality of groups acting on buildings}

Our goal in this section is to prove Theorem \ref{theointro:building}, namely, that groups acting properly, minimally, nonelementarily by isometries on locally finite thick affine buildings are properly proximal.

\subsubsection{Affine buildings and their boundaries}

We now review some facts about buildings that we will need in our proof, and refer to \cite{AbramenkoBrown} or \cite{Garrett} for more general information, and to \cite{KleinerLeeb} for a more metric point of view. 

Let $X$ be a  locally finite (simplicial) affine building. We assume furthermore that $X$ is thick (meaning that every codimension 1 simplex is contained in at least 3 different top dimensional simplices), and that $X$ does not split nontrivially as a product $T\times Y$ where $T$ is a tree and $Y$ another building.

The space $X$ is covered by \emph{apartments}, which are modeled on $\Sigma \simeq \mathbb R^n$ with a structure that we now describe. Fix an affine Coxeter group $W$, acting by affine isometries on $\Sigma$. Then $W$ can be written as a semidirect product $W_0\ltimes T$ where $W_0$ is a finite Coxeter group and $T$ is a group of translations of $\mathbb R^n$. We let $S$ be a Coxeter generating set for $W_0$ (made of simple reflections). The \emph{walls} of $\Sigma$ are the hyperplanes fixed by a conjugate of some $s\in S$. A \emph{halfspace} of $\Sigma$ is the closure of a connected component of $\Sigma$ with a wall removed.
The walls determine a simplicial structure on $\Sigma$ (in fact a polysimplicial structure if $X$ is a product), whose maximal simplices (called \emph{alcoves}) are the closures of the connected components of the complement of the walls. A \emph{special vertex} of $\Sigma$ is a vertex (for the above simplicial structure) whose stabilizer is a conjugate of $W_0$; equivalently, it is a vertex contained in a wall in every possible direction. Note that special vertices always exist. The \emph{type} of a vertex of $\Sigma$ is its $W$-orbit. Note that $W$ acts simply transitively on the set of alcoves of $\Sigma$, and that $T$ acts simply transitively on the set of vertices of a given type.

The simplicial structure on the building $X$ coincides with the above simplicial structure in restriction to every apartment of $X$. A fundamental axiom of buildings is that any two points are contained in a common apartment, and that there exists an isomorphism between any two apartments fixing their intersection. In particular, any two alcoves are contained in a common apartment (because any two points in their respective interiors are); in addition, for any two alcoves $c,c'\subseteq X$, we can define the \emph{Weyl distance} $\delta(c,c')$ as the unique element $w\in W$ such that $w$ sends $c$ to $c'$, when acting on some apartment containing both $c$ and $c'$. A fundamental theorem \cite[Theorem~5.73]{AbramenkoBrown}  in the theory of buildings is that a subset of $X$ which is $W$-isometric (meaning that there is a map which preserves the Weyl distance $\delta$) to a subset of $\Sigma$ is in fact contained in an apartment.  

The \emph{combinatorial convex hull} of two points $x,y\in \Sigma$, denoted $\Conv(x,y)$, is the intersection of all half-spaces containing both $x$ and $y$. If $x,y$ are points in $X$, then any apartment containing both $x$ and $y$ also contains $\Conv(x,y)$, so that we can talk of the combinatorial convex hull of $x$ and $y$ in a similar way. If $F\subseteq X$, then $\Conv(F)$ is the union of all $\Conv(x,y)$ for $x,y\in F$.

\medskip

A \emph{sector} based at a special vertex $x\in \Sigma$  is a subspace of $\Sigma$ equal to the closure of a connected component of the complement in $\Sigma$ of the union of all walls passing through $x$. It is a fundamental domain for the action of $W_0$ on $\Sigma$ (fixing $x$). More generally,  a \emph{sector-face} based at $x$ is the (unbounded) intersection of a sector with walls of $\Sigma$. 
Note that the visual boundary $\partial_\infty \Sigma$, with the Tits metric, is isometric to a Euclidean sphere. There is a simplicial structure on $\partial_\infty \Sigma$ where the visual boundaries of the sectors of $\Sigma$ are simplices  called \emph{Weyl chambers}; they are all isometric. The visual boundaries of the sector-faces are then the faces of the Weyl chambers. Again the \emph{type} of a sector-face, or of its visual boundary, is its $W$-orbit. The types of sector-faces are in bijection with the subsets of $S$ and each Weyl chamber has exactly one simplex of each type as a face. More generally, a simplex of type $I$ contains a simplex of type $J$ as a face if and only if $I\subseteq J$.

More generally, a \emph{sector} in $X$ is a subset simplicially isomorphic to a sector of $\Sigma$ (and therefore contained in an apartment by \cite[Theorem 11.53]{AbramenkoBrown}). A \emph{Weyl chamber} is the visual boundary of a sector; two sectors are \emph{equivalent} if they have the same visual boundary inside $\partial_\infty X$, or equivalently, if their intersection contains a sector. We write $\Delta$ for the set of Weyl chambers. If $x\in X$ is a  special vertex and $C$ is a Weyl chamber, then there is a unique sector based at $x$ with visual boundary $C$. We denote this sector $Q(x,C)$, it is the union of all geodesic rays between $x$ and a point in $C$.
 
 The set $\Delta$ is the set of Weyl chambers of some spherical building of Coxeter group $W_0$, which we describe now. For every $I\subseteq S$, a Weyl chamber $C$ contains exactly one simplex of type $I$ as a face. Denoting by $\Delta_I$ the set of simplices of type $I$, it follows that there is a natural map $\Delta\to \Delta_I$, and more generally, maps $\Delta_I\to \Delta_J$ whenever $I\subseteq J$. Furthermore, the boundary $\partial_\infty X$ endowed with the Tits metric is a metric realization of this simplicial complex \cite[Proposition~4.2.1]{KleinerLeeb}: each Weyl chamber is isometric to a model spherical simplex, and the distance between two points is the length distance in this piecewise spherical complex.
 
 The apartments of the spherical building $\Delta$ are precisely the boundaries of apartments of $X$. Any two Weyl chambers $C,C'\in\Delta$ are contained in the boundary of some apartment, and identifying this apartment with $\Sigma$ we see that there exists a unique element $w\in W_0$ such that $w C=C'$. This element $w$ does not depend on the choice of the apartment containing $C$ and $C'$ and is called the \emph{Weyl distance} between $C$ and $C'$, and denoted $\delta(C,C')$.
 
 The group $W_0$ contains a unique element of maximal  word length in $S$, which is denoted $w_0$. Two chambers $C,C'\in \Delta$ are \emph{opposite} if $\delta(C,C')=w_0$. Note that in $\Sigma$, two  Weyl chambers $C$ and $C'$ are opposite if and only if $C=-C'$. It follows that in $X$ two opposite Weyl chambers always contain points of $\partial_\infty X$ at Tits distance equal to $\pi$. More generally, a Weyl chamber $C$ will be \emph{opposite} to a simplex $\sigma\subseteq \partial_\infty X$ if there exists $C'$ adjacent to $\sigma$ and opposite to $C$; or equivalently, if for every point $\xi$ in $\sigma$, there exists a point $\xi'$ in $C$ at Tits distance $\pi$ from $\xi$.
 
 If $u\in \Delta_I$ is some simplex, then the \emph{residue}  $\Res(u)$ of $u$ is the set of Weyl chambers containing $u$. Since $X$ is supposed to be thick, the residue of every $u\in \Delta_I$ (for $I\neq \varnothing$) is uncountable. If $C\in \Delta$ and $u\in \Delta_I$, there is a unique $C'\in \Res(u)$ which is at minimal distance from $C$. We call it the \emph{projection} of $C$ on $\Res(u)$ and denote it by $\proj_u(C)$.
 
  Let $x\in X$ be a special vertex. For $z\in X$, let 
  $$\Omega_x(z)=\{C\in \Delta \mid z\in Q(x,C)\}\;.$$
    More generally, for a finite set $Z\subseteq X$ we define $\displaystyle\Omega_x(Z)=\cup_{z\in Z} \Omega_x(z)$.
    We define a topology on $\Delta$ by declaring that the sets $\Omega_x(z)$ are open. It turns out that for a fixed $x$, the sets $\Omega_x(z)$ form a basis of the topology on $\Delta$. The space $\Delta$ is then compact and metrizable, and a sequence $C_n$ converges to $C$ if for some (hence, all) special vertex $x\in X$ the sectors $Q(x,C_n)$ pointwise converge to $Q(x,C)$ in the sense that a finite subset of $X$ is contained in $Q(x,C)$ if and only if it is contained in $Q(x,C_n)$ for all sufficiently large $n\in\mathbb{N}$.
    We endow $\Delta_I$ with the quotient topology from the natural map $\Delta\to \Delta_I$. Again, a sequence $(\tau_n)_{n\in \NN}$ of simplices in $\Delta_I$ converges to $\tau$ if the sector-faces $Q(x,\tau_n)$ pointwise converge to $Q(x,\tau)$.
 
 \medskip

As we have seen, $\partial_\infty X$ with the Tits metric is a metric realization of the spherical building associated to $\Delta$. 
Let $\mathfrak c$ be a model simplex for the metric realization of a Weyl chamber, and $\mathfrak c_I$ be  its simplex of type $I\subseteq S$. Note that the only isometry of $\mathfrak c$ which preserves the type of simplices in $\partial \mathfrak c$ is the identity. 
Therefore, for any $C\in \Delta$  there is a unique type-preserving isometry $\mathfrak c\to C$
(viewing $C$ as a subset of $\partial_\infty X$) and this defines a map $R:\mathfrak c\times  \Delta \to \partial_\infty X$. More generally, for every $I\subseteq S$, we have a map $R_I:\mathfrak{c}_I\times\Delta_I\to\partial_\infty X$.

Conversely, for every $\xi$ in $\partial_\infty X$, there is a unique $\theta\in \mathfrak c$ such that there is a chamber $C\in \Delta$ with $R(\theta,C)=\xi$. In that case we simply write $\theta=\theta(\xi)$. Furthermore,  if $\xi$ is contained in the interior of a Weyl chamber, then $C$ is also unique, and we also write $C(\xi)$ for this $C$. More generally, $\xi$ is contained in the interior of some simplex of type $I$, which is unique and that we denote $\tau(\xi)$.
 
\begin{lemma}\label{lem:visualvscomb}
Endow $\partial_\infty X$ with the visual topology. Then for every $I\subseteq S$ and every $\theta$ in the interior of  $\mathfrak c_I$, the map $R_I(\theta,\cdot):\Delta_I\to \partial_\infty X$ is a homeomorphism onto its image, which is closed.
\end{lemma}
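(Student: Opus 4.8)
The plan is to prove this in two steps: first that $R_I(\theta,\cdot)$ is continuous and injective, then that it is a closed map, which in a compact-to-Hausdorff setting gives automatically that it is a homeomorphism onto a closed image. For injectivity, I would argue as follows: if $R_I(\theta,\tau)=R_I(\theta,\tau')=\xi$, then since $\theta$ lies in the interior of $\mathfrak c_I$, the point $\xi$ lies in the interior of a simplex of type $I$ in $\partial_\infty X$, and that simplex is exactly $\tau(\xi)$ --- the unique type-$I$ simplex having $\xi$ in its interior, as recalled just before the lemma. Hence $\tau = \tau(\xi) = \tau'$. This is where the restriction that $\theta$ lie in the \emph{interior} of $\mathfrak c_I$ (rather than on a proper face) is essential; on a face of type $J\supsetneq I$ the point $R_I(\theta,\tau)$ would only determine the type-$J$ face of $\tau$, not $\tau$ itself.

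For continuity, I would work with the combinatorial description of convergence in $\Delta_I$ recalled in the excerpt: $\tau_n\to\tau$ means that for a fixed special vertex $x\in X$, the sector-faces $Q(x,\tau_n)$ converge pointwise to $Q(x,\tau)$ (a finite subset of $X$ lies in $Q(x,\tau)$ iff it lies in $Q(x,\tau_n)$ for all large $n$). I want to deduce that $R_I(\theta,\tau_n)\to R_I(\theta,\tau)$ in the visual topology on $\partial_\infty X$, i.e.\ that the geodesic rays $[x,R_I(\theta,\tau_n))$ converge uniformly on compact sets to $[x,R_I(\theta,\tau))$. The point $R_I(\theta,\tau)$ is the endpoint of the unique geodesic ray from $x$ that lands at the "position $\theta$'' inside the sector-face $Q(x,\tau)$; since long initial segments of $Q(x,\tau_n)$ and $Q(x,\tau)$ agree (they are simplicially isomorphic sector-faces sharing a large neighborhood of $x$), the corresponding geodesic rays share long initial segments, which is exactly uniform convergence on compacta. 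A clean way to package this: the restriction of $R_I(\theta,\cdot)$ to $\Delta_I$ factors through the map $\Delta\to\Delta_I$ and the full-chamber map $R(\theta',\cdot):\Delta\to\partial_\infty X$ for an appropriate $\theta'\in\mathfrak c$ with $\theta'$ lying over $\theta$; continuity of the chamber-level map is the cleanest base case and reduces to the observation that pointwise convergence of sectors $Q(x,C_n)\to Q(x,C)$ forces convergence of the associated geodesic rays.

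The last step is to promote "continuous injection'' to "homeomorphism onto a closed subset.'' Here I would simply invoke that $\Delta_I$ is compact (stated in the excerpt) and $\partial_\infty X$ with the visual topology is Hausdorff (it is even metrizable, being a proper $\cat$ boundary): a continuous injection from a compact space to a Hausdorff space is a closed map, hence a homeomorphism onto its image, and the image is closed (indeed compact). I expect the main obstacle to be the continuity argument --- specifically, making rigorous the passage from "pointwise convergence of sector-faces in $X$'' to "uniform-on-compacta convergence of the corresponding geodesic rays,'' which requires knowing that a sector-face $Q(x,\tau)$ and a nearby $Q(x,\tau_n)$ genuinely share a large metric ball around $x$ (not merely agree combinatorially far away), together with the fact that a geodesic ray from $x$ in a building is determined on any ball by the sector-face containing it on that ball. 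Everything else --- injectivity via the uniqueness of $\tau(\xi)$, and the compact-to-Hausdorff closedness --- is essentially formal given the setup already laid out.
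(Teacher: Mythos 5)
Your proposal is correct, and for the second half of the statement it takes a genuinely different route from the paper. The continuity argument is essentially the paper's: convergence $\tau_n\to\tau$ in $\Delta_I$ means the sector-faces $Q(x,\tau_n)$ eventually contain any prescribed finite subset of $Q(x,\tau)$, hence share a long initial piece of the ray in direction $\theta$, which gives visual convergence of $R_I(\theta,\tau_n)$ to $R_I(\theta,\tau)$; the technical point you flag (combinatorial agreement near $x$ forcing the rays to share an initial segment) is exactly the step the paper also relies on. Where you diverge is in establishing that the map is a homeomorphism onto a closed image: you make injectivity explicit (via the uniqueness of the type-$I$ simplex containing $\xi$ in its interior, which is indeed where the hypothesis $\theta\in\mathrm{int}(\mathfrak c_I)$ enters) and then invoke the compactness of $\Delta_I$ (a quotient of the compact space $\Delta$) together with the Hausdorffness of $\partial_\infty X$, so that a continuous injection is automatically a homeomorphism onto a compact, hence closed, image. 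The paper instead proves this directly: given $\xi_n=R_I(\theta,\tau_n)$ converging to some $\xi\in\partial_\infty X$, it analyzes the germs of the rays $\rho_n$ at $x$, shows the limit $\xi$ is again of the form $R_I(\theta,\tau)$, and that $\tau_n\to\tau$, using apartments, combinatorial convex hulls and the fact that $\theta$ is interior to push the convergence arbitrarily far into the sector-face. Your compactness shortcut is shorter and entirely legitimate given that the compactness of $\Delta$ is asserted before the lemma; the paper's hands-on argument buys slightly more (an explicit description of which visual limits of type-$I$ points are again of type $I$ with the same $\theta$), but none of that extra information is needed for the statement as given.
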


\begin{proof}
Fix $\theta$ in the interior of $\mathfrak c_I$. Let $(\tau_n)_{n\in \NN}\in \Delta_I^\NN$. Assume that $(\tau_n)_{n\in \NN}$ converges to some $\tau\in \Delta_I$. Let $\xi_n=R_I(\theta,\tau_n)$ and $\xi=R_I(\theta,\tau)$, and let $\rho_n$ (resp.\ $\rho$) be the geodesic ray starting at $x$ to $\xi_n$ (resp.\ to $\xi$) for some special vertex $x$. The convergence of $\tau_n$ to $\tau$ in $\Delta_I$ means that for every finite subset $F$ of $Q(x,\tau)$ we have for $n$ large enough $F\subseteq Q(x,\tau_n)$. In particular for $n$ large enough $Q(x,\tau_n)$ contains a large initial segment of $\rho$, and therefore $\xi_n$ converges to $\xi$. This proves the continuity of $R_I(\theta,\cdot)$.

Now let again $(\tau_n)_{n\in \NN}\in \Delta_I^\NN$ and $\xi_n=R_I(\theta,\tau_n)$. 
Assume that $\xi_n$ converges to some $\xi\in \partial_\infty X$. We will prove that $\tau(\xi)$ is of type $I$, and in fact $\xi$ can be written as $R_I(\theta,\tau)$ with $\tau\in\Delta_I$, and in addition $\tau_n$ converges to $\tau$: this will prove both that $R_I(\theta,\cdot)^{-1}$ is continuous and that $R_I(\theta,\Delta_I)$ is closed.  Let $\rho_n$ be the geodesic ray from $x$ to $\xi_n$.
 Since $\xi_n=R_I(\theta,\tau_n)$, the (open) simplex containing the initial segment of $\rho_n$ is always of the same dimension. As $(\xi_n)_{n\in\mathbb{N}}$ is a convergent sequence, this simplex must eventually be constant. 
  Let $A_n$ be an apartment containing $\rho_n$. Then $\partial_\infty A_n$ is isometric to the space of directions (in $A_n$) at $x$, and therefore $\theta$ determines the direction of $\rho_n$. Since furthermore the initial segment $\rho_n$ is (for $n$ large enough) in a constant simplex, it follows that this segment is eventually constant, 
 and therefore equal to the one of $\rho$. 

Write $\xi=R_I(\theta',\tau)$ for some $\tau\in \Delta_J$ with $J\subseteq S$ and $\theta'$ in the interior of $\mathfrak c_J$. Fix an apartment $A$ containing $\rho$. Using again that $\partial_\infty A$ is isometric to the space of directions at $x$, we see that both $J$ and $\theta'$ are determined by the germ of $\rho$ at $x$. Since for $n$ large enough $\rho_n$ and $\rho$ have the same germ at $x$ we deduce that $J=I$ and that $\theta'=\theta$, in other words that $\xi$ is of the form $R_I(\theta,\tau)$ for some $\tau \in \Delta_I$.

Using the same argument repeatedly we find that for $n$ large enough $\rho_n$ and $\rho$ have the same initial segment of arbitrary large length. 
Since $\theta$ is in the interior of $\mathfrak c_I$, for every boundary wall $H$ of the sector face $Q(x,\tau)$, we have $\lim\limits_{k\to +\infty} d(\rho(k),H)=+\infty$. It follows that for  every $y\in Q(x,\tau)$ we have for $k$ large enough $y\in \Conv(x,\rho(k))$. 
 Thus for $k$ large enough we have $F\subseteq \Conv(x,\rho(k))$ and therefore for $n$ large enough we have $F\subseteq Q(x,\tau_n)$.
 Hence $\tau_n$ converges to $\tau$, which completes the proof.
\end{proof}

\subsubsection{Construction of the measure}
We now wish to construct a probability measure on $\Delta$ (and consequently on the spaces $\Delta_I$) which will satisfy the proper proximality condition. 

First, let $\Sigma^+$ be a sector in $\Sigma$ based at $0$. For every pair of special vertices $x,y\in X$ there is a unique simplicial map $\phi:\Sigma\to X$, which preserves the type at infinity,
 such that $\phi(0)=x$ and $\phi(y)\in \Sigma^+$. The point $\phi(y)$ is called the \emph{vectorial distance} between $x$ and $y$ and denoted $\sigma(x,y)$. 
 For every $\lambda\in \Sigma^+$   there exists a unique $\lambda^*\in \Sigma^+$ such that for all pairs of special vertices $x,y\in X$ one has $\sigma(x,y)=\lambda$ if and only if $\sigma(y,x)=\lambda^*$.
 We define, for $x\in X$ and $\lambda\in \Sigma^+$,
$$V_\lambda(x)=\{y\in X\mid \sigma(x,y)=\lambda\}.$$

Note that the building $X$ is \emph{regular} in the terminology of \cite{Parkinson1} because it does not have any tree factor, see \cite[Theorem~2.4]{Parkinson1}. Therefore, it follows from \cite[Theorem~5.15]{Parkinson1} that the cardinality of $V_\lambda(x)$ is the same for all special vertices $x$; we denote it $N_\lambda$.

\begin{prop}
Let $o\in X$ be a special vertex. Then there is a unique probability measure $\mu_o$ on $\Delta$ such that $\mu_o(\Omega_o(z))=\frac{1}{N_{\sigma(o,z)}}$.

Furthermore, if $o'$ is another special vertex then $\mu_{o'}$ and $\mu_o$ have the same null sets. 
\end{prop}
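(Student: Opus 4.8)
The plan is to build $\mu_o$ as a weak-$*$ limit of the uniform probability measures supported on the sets $V_\lambda(o)$, suitably pushed to $\Delta$, and to identify its value on the basic open sets $\Omega_o(z)$. First I would fix a special vertex $z \in X$ and observe that for $\lambda \in \Sigma^+$ deep enough in the sector (i.e. once $\sigma(o,z)$ lies ``between'' $o$ and the chamber at infinity determined by $\lambda$), the number of $y \in V_\lambda(o)$ with $z \in \Conv(o,y)$ -- equivalently such that $Q(o,C) \supseteq \{z\}$ for the chamber $C$ carrying the geodesic $[o,y)$ -- is exactly $N_{\sigma(o,z)}^{-1} \cdot N_\lambda$ up to the combinatorial count of how the sector $V_\lambda(z)$ sits inside $V_\lambda(o)$; here I would invoke the multiplicativity of the cardinalities $N_\lambda$ established in \cite[Theorem~5.15]{Parkinson1} (regularity of $X$, which holds since $X$ has no tree factor). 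Concretely, the key identity is $N_\lambda = N_{\sigma(o,z)} \cdot |\{y \in V_\lambda(o) : z \in Q(o, C(y))\}|$ for $\lambda$ large, so the proportion of points of $V_\lambda(o)$ ``passing through'' $z$ stabilizes to $1/N_{\sigma(o,z)}$.

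With this in hand, I would define $\nu_\lambda$ to be the pushforward to $\Delta$ of the normalized counting measure on $V_\lambda(o)$ under $y \mapsto C(y)$ (the chamber at infinity of the geodesic ray $[o,y)$), extract a weak-$*$ limit point $\mu_o$ along some net of $\lambda \to \infty$, and use the stabilization above together with the fact that the $\Omega_o(z)$ are open (indeed clopen, by Lemma~\ref{lem:visualvscomb} and the discussion of the topology on $\Delta$) to conclude $\mu_o(\Omega_o(z)) = 1/N_{\sigma(o,z)}$. Since $\Delta$ is compact metrizable and the $\Omega_o(z)$, $z \in X$, form a basis of clopen sets closed under finite intersection (an intersection $\Omega_o(z) \cap \Omega_o(z')$ is again of this form, or empty, by convexity of sectors), a measure on $\Delta$ is determined by its values on these sets; this gives uniqueness of $\mu_o$ and in particular shows the limit does not depend on the chosen net.

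For the last assertion, that $\mu_{o'}$ and $\mu_o$ have the same null sets, I would compare $\mu_o$ and $\mu_{o'}$ on each basic set. Given two special vertices $o, o'$, pick any chamber $C \in \Delta$ and consider the two sectors $Q(o,C)$ and $Q(o',C)$; they eventually coincide (their intersection contains a sub-sector). Using this, one shows that for a fixed $o'$ and any basic open set $\Omega_{o'}(z')$, the ratio $\mu_o(\Omega_{o'}(z') \cap \Omega_o(z)) / \mu_{o'}(\Omega_{o'}(z'))$ is bounded above and below by positive constants depending only on $\sigma(o,o')$ -- essentially because replacing the basepoint $o$ by $o'$ only reshuffles finitely many ``layers'' of the combinatorial structure, and the relevant index bounds come again from the finiteness and multiplicativity of the $N_\lambda$'s. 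Hence neither measure is singular with respect to the other on any basic set, and since these generate the Borel $\sigma$-algebra, $\mu_o \ll \mu_{o'}$ and $\mu_{o'} \ll \mu_o$.

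\textbf{Main obstacle.} The delicate point is the counting step: making precise, and proving, the identity $N_\lambda = N_{\sigma(o,z)} \cdot |\{y \in V_\lambda(o): z \in Q(o,C(y))\}|$ for all sufficiently large $\lambda$. This requires knowing that $V_\lambda(o)$ decomposes, for $\lambda$ deep enough in a fixed chamber direction, as a disjoint union over intermediate special vertices $z$ of ``translated'' copies $V_{\lambda'}(z)$ with $\lambda' = \lambda - \sigma(o,z)$ in the appropriate sense, and that the fibers all have the same size $N_{\sigma(o,z)}$ independently of $\lambda$. Establishing this cleanly is exactly where the regularity hypothesis (no tree factor, so \cite[Theorem~2.4]{Parkinson1} and the Macdonald-type formula of \cite[Theorem~5.15]{Parkinson1}) does the real work; once it is in place, the construction of $\mu_o$, its uniqueness, and the equivalence of the $\mu_o$ are formal consequences of compactness of $\Delta$ and the clopen basis $\{\Omega_o(z)\}$.
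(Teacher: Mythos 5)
Your plan is correct and reconstructs exactly the argument the paper relies on: the paper gives no proof of this proposition but defers to Parkinson (\cite[p.~587]{Parkinson2} for the construction, \cite[Theorem~3.17]{Parkinson2} for the equivalence of the $\mu_o$), and that argument is precisely your consistent assignment $\Omega_o(y)\mapsto 1/N_\lambda$ on the nested finite clopen partitions $\{\Omega_o(y)\}_{y\in V_\lambda(o)}$, with the refinement count $N_\lambda/N_{\sigma(o,z)}$ supplied by regularity and the mutual absolute continuity obtained from two-sided bounds on basic sets with constants depending only on $\sigma(o,o')$. Two cosmetic points: $C(y)$ is not well defined for a vertex $y$, so the count should be phrased via $z\in\Conv(o,y)$; and $\{\Omega_o(z)\}$ is not literally closed under intersection, but uniqueness follows anyway because each $\Omega_o(z)$ is a finite disjoint union of sets of the partition $\{\Omega_o(y)\}_{y\in V_\lambda(o)}$ for $\lambda\gg\sigma(o,z)$, and these partitions generate the Borel $\sigma$-algebra.
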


The construction of the measure is explained in \cite[p.587]{Parkinson2}; the last part of the proposition is proved in \cite[Theorem~3.17]{Parkinson2}.

Let $C\in \Delta$, and let $A$ be an apartment such that $C\subseteq\partial_\infty A$. Recall that the \emph{retraction} on $A$ centered at $C$ is the unique map $\rho_{A,C}:X\to A$ such that for every apartment $A'$ that contains a sector in the class of $C$, the restriction of $\rho_{A,C}$ to $A'$ is the unique isomorphism $A'\to A$ fixing $A'\cap A$ pointwise. The following definition comes from \cite[Theorem 3.4]{Parkinson2}, where it is shown in particular not to depend on the choice of the apartment $A$.

\begin{de}
Let $C\in \Delta$, and let $x\in X$ be a special vertex. Let $A$ be an apartment containing $Q(x,C)$, and $\psi:A\to \Sigma$ be the unique simplicial isomorphism, preserving the type at infinity, such that $\psi(x)=0$ and $\psi(Q(x,C))=\Sigma^+$.

The \emph{vectorial horofunction} associated to $C$ is the function $h_{x,C}:X\to \Sigma$ defined as $h_{x,C}=\psi\circ \rho_{A,C}$.
\end{de}

The following proposition was proved in \cite[Proposition 6.8]{BCL}, in the case of buildings of type $\widetilde A_2$. Here we treat the general case.

\begin{prop}\label{prop:opposition-measure}
Let  $C_0\in \Delta$. Then $\mu_o$-almost every $C$ is opposite $C_0$.
\end{prop}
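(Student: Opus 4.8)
The plan is to reduce the statement to a counting estimate on finite spherical residues, exploiting the structure of the measure $\mu_o$ and the vectorial horofunction $h_{o,C_0}$. First I would fix the special vertex $o$ and the chamber $C_0\in\Delta$, and use the vectorial horofunction $h_{o,C_0}:X\to\Sigma$ to organize the building into "level sets". Recall that a chamber $C$ is opposite $C_0$ if and only if $\delta(C,C_0)=w_0$; equivalently, the sector $Q(o,C)$ "goes in the $-\Sigma^+$ direction" as seen through the retraction $\rho_{A_0,C_0}$, i.e. for large $\lambda$ the set $V_\lambda(o)\cap Q(o,C)$ is sent by $h_{o,C_0}$ deep into the part of $\Sigma$ corresponding to $w_0$. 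So I would first record, using \cite[Theorem~3.4]{Parkinson2} and the basic properties of retractions, a combinatorial characterization: $C$ is opposite $C_0$ iff for every (equivalently, one cofinal sequence of) $\lambda\in\Sigma^+$, the vertex $z_\lambda$ of $Q(o,C)$ with $\sigma(o,z_\lambda)=\lambda$ satisfies $h_{o,C_0}(z_\lambda)=w_0\lambda$ (up to the correct normalization $\lambda^*$). The complement — chambers $C$ \emph{not} opposite $C_0$ — is then the set where, from some point on, the geodesic from $o$ towards $C$ "bends", i.e. passes through a wall of $\Sigma$ it should have stayed away from.

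Next I would use the defining property $\mu_o(\Omega_o(z))=1/N_{\sigma(o,z)}$ to turn "not opposite" into a measure estimate. The key point is that the set of $C\in\Delta$ such that $Q(o,C)$ contains a fixed vertex $z$ with $\sigma(o,z)=\lambda$ has measure $1/N_\lambda$, and that the chambers $C$ that are \emph{not} opposite $C_0$ are, for each $\lambda$, confined to the union $\Omega_o(B_\lambda)$ where $B_\lambda$ is the (relatively small) set of vertices at vectorial distance $\lambda$ from $o$ whose image under $h_{o,C_0}$ is \emph{not} the generic value $w_0\lambda$ (i.e. the vertices "folded towards $C_0$"). Because $X$ is thick and regular \cite[Theorem~2.4, Theorem~5.15]{Parkinson1}, the number of $\lambda$-vertices lying in the generic position grows like $N_\lambda$ times a factor that goes to $1$, whereas the non-generic ones form a sparser set; thus $\mu_o(\Omega_o(B_\lambda))=\sum_{z\in B_\lambda} 1/N_\lambda = |B_\lambda|/N_\lambda \to 0$ as $\lambda\to\infty$ in $\Sigma^+$. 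Since the set of chambers not opposite $C_0$ is contained in $\bigcap_\lambda \Omega_o(B_\lambda)$ (a chamber not opposite $C_0$ must be "folded" at \emph{every} sufficiently large scale — here I need the monotonicity/nesting that if $C$ is not opposite $C_0$ then it is non-generic at all large $\lambda$, which follows from convexity of combinatorial convex hulls and the fact that once a sector-ray bends relative to $C_0$ it stays bent), its $\mu_o$-measure is bounded by $\inf_\lambda |B_\lambda|/N_\lambda = 0$. This gives $\mu_o$-almost every $C$ is opposite $C_0$.

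The main obstacle I expect is the counting estimate $|B_\lambda|/N_\lambda\to 0$: one must control, uniformly in the direction of $\lambda\in\Sigma^+$ and as $\|\lambda\|\to\infty$, the proportion of vertices at vectorial distance $\lambda$ from $o$ whose horofunction value differs from the opposite-generic value. In the $\widetilde A_2$ case this is the content of \cite[Proposition~6.8]{BCL}, and there it reduces to an explicit Hall--Littlewood / Macdonald polynomial computation with the Poincaré series of $W_0$; in general I would extract the needed asymptotics from Parkinson's formulas for $N_\lambda$ and for the "building-like" transition kernels in \cite{Parkinson1,Parkinson2}, specifically the fact that $N_\lambda$ grows, after dividing by the leading $q^{\langle\text{ht},\lambda\rangle}$ term, to a nonzero constant (the inverse Poincaré series of $W_0$ evaluated at the parameters), while each "fold" introduces at least one extra missing factor of $q_s$, so non-generic positions are suppressed by a factor tending to $0$. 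The other point requiring care is the reduction step — making precise the claim that a single combinatorial "bend" of $Q(o,C)$ relative to the retraction centered at $C_0$ propagates to all larger scales, so that the bad set is genuinely an intersection over $\lambda$ and not just a limsup; this should follow from the fact that $h_{o,C_0}$ restricted to any sector $Q(o,C)$ is $1$-Lipschitz-compatible with $\sigma(o,\cdot)$ and from convexity of $\Conv(o,z)$, but it is the place where one has to be most careful about the polysimplicial bookkeeping.
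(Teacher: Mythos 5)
Your reduction hinges on the counting estimate $|B_\lambda|/N_\lambda\to 0$, where $B_\lambda$ is the set of $y\in V_\lambda(o)$ with $h_{o,C_0}(y)\neq w_0\lambda$, and this estimate is false. The condition that $h_{o,C_0}$ takes the value $w_0\lambda$ along $Q(o,C)$ characterizes the chambers $C$ that are opposite $C_0$ \emph{and} such that $Q(o,C)\cup Q(o,C_0)$ lies in a common apartment (this is Lemma~\ref{lem:nulambda} with $w=w_0$, i.e.\ $C\in\Delta_o^{w_0}(C_0)$). A chamber can perfectly well be opposite $C_0$ while the unique apartment it spans with $C_0$ misses $o$; then $Q(o,C)$ is ``bent'' at every scale, $h_{o,C_0}(z_\lambda)=w_0\lambda+c$ for a fixed nonzero offset $c$, and $z_\lambda\in B_\lambda$ for all large $\lambda$. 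Such chambers form a set of positive measure: in the rank-one picture of a $(q+1)$-regular tree, the vertices at distance $n$ from $o$ heading straight away from the end $\xi_0$ number $q^n$ out of $N_n=(q+1)q^{n-1}$, so $|B_n|/N_n\to 1/(q+1)$; for $\mathrm{SL}_3(\mathbb{Q}_p)$ the opposite chambers in standard position relative to $o$ have measure $q^3/(1+2q+2q^2+q^3)<1$. Hence $\bigcap_\lambda\Omega_o(B_\lambda)$ has positive measure --- it contains every opposite chamber in non-standard position --- and your (correct) containment of the non-opposite set in this intersection only yields the bound $\mu_o(\{C\ \text{not opposite}\ C_0\})\le 1-\mu_o(\Delta_o^{w_0}(C_0))$, which is not $0$. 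In short, you have conflated ``not in the $w_0$-cell as seen from $o$'' with ``not opposite''.

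The paper avoids any asymptotic counting. It writes the non-opposite set as the countable union $\bigcup_{x}\bigcup_{w\neq w_0}\Delta_x^w(C_0)$ over special vertices $x$ and Weyl distances $w\neq w_0$, and shows each piece is null by a pigeonhole argument: for $w\neq w_0$ there is a non-maximal simplex $u$ such that the sets $\Delta^w(C)$ for $C\in\Res(u)$ are pairwise disjoint (via the projection $\proj_u$), while Lemma~\ref{lem:mememesure} (resting on \cite[Lemma~3.19]{Parkinson2}, the independence of $|Y_\lambda^w(x,C_0)|$ from $C_0$) shows they all have the same measure; since $\Res(u)$ is infinite by thickness and $\mu_x$ is finite, that common measure is $0$. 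If you want to keep a counting approach, you would have to prove $|Y_\lambda^w(x,C_0)|/N_\lambda\to 0$ for each fixed $w\neq w_0$ separately (not for the complement of the $w_0$-cell all at once), and you would still need the countable union over base vertices $x$ to pass from $\Delta_x^w(C_0)$ to all non-opposite chambers.
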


In order to prove Proposition \ref{prop:opposition-measure}, let us introduce some notations. Recall that $w_0$ is the longest element of $W_0$.
For every special vertex $x\in X$, let $\Delta_x(C_0)$ be the set of $C\in \Delta$ such that $Q(x,C)\cup Q(x,C_0)$ is contained in an apartment.  For every $w\in W_0$, we write $\Delta^w(C_0)=\{ C\in\Delta\mid \delta(C_0,C)=w\}$ and $\Delta_x^w(C_0)=\Delta_x(C_0)\cap \Delta^w(C_0)$. Finally, for $\lambda\in \Sigma^+$ we define 
$$Y_\lambda^w(x,C_0)=\{ y\in X\mid \sigma(x,y)=\lambda \textrm{ and } \Omega_x(y)\cap \Delta_x^w(C_0)\neq \varnothing\}.$$

For $\lambda,\nu\in \Sigma^+$, let $\Pi_\lambda$ be the convex hull of $W_0.\lambda$, and write $\lambda \ll \nu$ if $\nu-\Pi_\lambda\subseteq \Sigma^+$.

\begin{lemma}\label{lem:nulambda}
Let $x\in X$ be a special vertex, and let $w\in W_0$. Then $$Y_\lambda^w(x,C_0)=\{ y\in V_\lambda(x) \mid h_{x,C_0}(y)=w\lambda\}.$$
\end{lemma}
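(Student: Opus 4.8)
<br>

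My plan is to unwind both inclusions directly against the definition of the vectorial horofunction, using only elementary facts about apartments and about retractions centered at a chamber at infinity. Throughout, fix an apartment $A$ with $Q(x,C_0)\subseteq A$ and the type-preserving simplicial isomorphism $\psi\colon A\to\Sigma$ with $\psi(x)=0$ and $\psi(Q(x,C_0))=\Sigma^+$, so that by definition $h_{x,C_0}=\psi\circ\rho_{A,C_0}$. I will repeatedly use three elementary observations: (i) inside $A$, for each $w\in W_0$ there is a unique Weyl chamber $\bar C_w\in\partial_\infty A$ with $\delta(C_0,\bar C_w)=w$, namely $\bar C_w=\psi_\infty^{-1}(w\cdot\partial_\infty\Sigma^+)$, and then $Q(x,\bar C_w)=\psi^{-1}(w\cdot\Sigma^+)$; (ii) since $\Sigma^+$ is a (closed) strict fundamental domain for $W_0$, one has $W_0\lambda\cap w\cdot\Sigma^+=\{w\lambda\}$ for every $w\in W_0$; (iii) if a special vertex $p$ lies with $x$ in a common apartment $A'$ and $\psi'\colon A'\to\Sigma$ is any type-preserving isomorphism with $\psi'(x)=0$, then $\sigma(x,p)=\lambda$ if and only if $\psi'(p)\in W_0\lambda$ (any two such $\psi'$ differ by an element of $W_0$ fixing $0$, and $\lambda$ is the only point of $W_0\lambda$ in $\Sigma^+$); in particular $\sigma(x,p)=\lambda$ forces $d(x,p)=\|\lambda\|$.

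\textbf{The inclusion $\subseteq$.} Given $y\in Y_\lambda^w(x,C_0)$, choose $C\in\Delta_x^w(C_0)$ with $y\in Q(x,C)$ and an apartment $A'$ containing $Q(x,C)\cup Q(x,C_0)$. Since $Q(x,C_0)\subseteq A\cap A'$ is a sector in the class of $C_0$, the retraction $\rho_{A,C_0}$ restricts on $A'$ to the isomorphism $\alpha\colon A'\to A$ fixing $A\cap A'$ pointwise; in particular $\alpha(x)=x$. Then $\psi\circ\alpha\colon A'\to\Sigma$ is type-preserving with $(\psi\alpha)(x)=0$ and $(\psi\alpha)(Q(x,C_0))=\Sigma^+$, so it carries $Q(x,C)$ onto $\delta(C_0,C)\cdot\Sigma^+=w\cdot\Sigma^+$. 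Hence $h_{x,C_0}(y)=\psi(\alpha(y))\in w\cdot\Sigma^+$, while $h_{x,C_0}(y)=(\psi\alpha)(y)\in W_0\lambda$ by (iii); combining with (ii) gives $h_{x,C_0}(y)=w\lambda$.

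\textbf{The inclusion $\supseteq$ and the main obstacle.} Let $y\in V_\lambda(x)$ with $h_{x,C_0}(y)=w\lambda$, and set $z:=\rho_{A,C_0}(y)=\psi^{-1}(w\lambda)$. By (i), $z\in Q(x,\bar C_w)$ with $\bar C_w\in\Delta_x^w(C_0)$; by (iii), $\sigma(x,z)=\lambda$, so $d(x,z)=\|\lambda\|=d(x,y)$, and since $\rho_{A,C_0}$ is $1$-Lipschitz and fixes $x$ it maps the geodesic $[x,y]$ isometrically onto $[x,z]$. The crux — the one step I expect not to be purely formal — is to promote this to the assertion that $y$ lies in some apartment $A'$ containing the \emph{whole} sector $Q(x,C_0)$. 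This is where one invokes the standard behaviour of retractions centered at a chamber at infinity (as in \cite{Parkinson2}; see also \cite{KleinerLeeb}): take $A'$ to be an apartment containing $Q(y,C_0)$; since $Q(y,C_0)$ and $Q(x,C_0)$ share a subsector lying in $A\cap A'$, the retraction restricts on $A'$ to an isomorphism $\alpha\colon A'\to A$ fixing $A\cap A'$ pointwise, and the fact that $\rho_{A,C_0}$ is isometric on $[x,y]$ forces $\alpha^{-1}(x)=x$, i.e.\ $x\in A'$ and $Q(x,C_0)\subseteq A'$; equivalently, $\Conv(\{y\}\cup Q(x,C_0))$ is thin, hence contained in a single apartment. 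Granting this, $\psi\circ\alpha\colon A'\to\Sigma$ is type-preserving with $x\mapsto 0$ and $Q(x,C_0)\mapsto\Sigma^+$, and $(\psi\alpha)(y)=\psi(\rho_{A,C_0}(y))=\psi(z)=w\lambda\in w\cdot\Sigma^+$; so, setting $C:=(\psi\alpha)_\infty^{-1}(w\cdot\partial_\infty\Sigma^+)\in\partial_\infty A'$, we obtain $y\in Q(x,C)$, $\delta(C_0,C)=w$, and $Q(x,C)\cup Q(x,C_0)\subseteq A'$; thus $C\in\Omega_x(y)\cap\Delta_x^w(C_0)$ and $y\in Y_\lambda^w(x,C_0)$. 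Everything here is bookkeeping with type-preserving isomorphisms and observation (ii), apart from the production of $A'$, which is the genuine geometric input.
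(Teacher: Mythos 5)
Your inclusion $\subseteq$ is correct and is essentially the paper's argument (the paper dispatches it in two lines; your bookkeeping with $\psi\circ\alpha$ and the observation $W_0\lambda\cap w\Sigma^+=\{w\lambda\}$ is a fine way to spell it out). The problem is the inclusion $\supseteq$, and it sits exactly at the step you yourself flag as ``the genuine geometric input'': producing an apartment containing $y$ together with all of $Q(x,C_0)$. The justification you give does not work. You pick an arbitrary apartment $A'$ containing $Q(y,C_0)$, set $\alpha=\rho_{A,C_0}|_{A'}$, and assert that the isometry of $\rho_{A,C_0}$ on $[x,y]$ ``forces $\alpha^{-1}(x)=x$''. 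But $\alpha^{-1}(x)=x$ is \emph{equivalent} to $x\in A'$, which is (up to trivialities) the statement to be proved, and nothing in your argument ties the segment $[x,y]$ -- which a priori meets $A'$ only at $y$ -- to the map $\alpha$. Knowing that $\rho_{A,C_0}$ carries $[x,y]$ isometrically onto $[x,z]$ does not let you identify $[x,y]$ with $\alpha^{-1}([x,z])$: both are geodesics ending at $y$ that map onto $[x,z]$, but the fiber of $\rho_{A,C_0}$ over $[x,z]$ can contain several such geodesics, since the induced map on the link of $y$ is a retraction of a spherical building onto an apartment and is not injective. A generic apartment $A'\supseteq Q(y,C_0)$ simply does not contain $x$, so the claim as written is false for the arbitrary choice you allow yourself; and ``standard behaviour of retractions centered at a chamber at infinity'' is not a citation of any specific result that closes this.

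For comparison, the paper proves precisely this point by a different and more elaborate route: it approximates $Q(x,C_0)$ by the finite convex sets $\Conv(x,z)$ where $z\in Q(x,C_0)$ satisfies $\sigma(x,z)=\nu$ with $\lambda\ll\nu$; it uses Parkinson's formula $\sigma(y,z)=\nu-h_{x,C_0}(y)$ and a retraction centered at an \emph{alcove} containing $z$ (not at the chamber at infinity), shows via the containment $\sigma(x,y')\in\Pi_\lambda$ and a uniqueness argument that this retraction is a $W$-isometry on three well-chosen alcoves at $x$, $y$, $z$, and invokes the $W$-isometry criterion (Abramenko--Brown, Theorem 5.73) to place $\{x,y,z\}$ in one apartment; letting $\nu\to\infty$ and applying the same criterion again then yields an apartment containing $Q(x,C_0)\cup\{y\}$. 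Your reduction to ``$\Conv(\{y\}\cup Q(x,C_0))$ is thin'' is the right target, and your use of $\sigma(x,y)=\lambda$ to get $d(x,\rho_{A,C_0}(y))=d(x,y)$ is the right kind of input, but the passage from that metric equality to the existence of the apartment is the entire content of the lemma and must be argued along the lines above rather than asserted.
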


\begin{proof}
Let $y\in Y_{\lambda}^w(x,C_0)$. Then there exists $C_1\in \Omega_x(y)\cap \Delta_x^w(C_0)$. In particular, there exists an apartment $A$ containing $Q(x,C_0)\cup Q(x,C_1)$, and in this apartment we see from the definition of $h_{x,C_0}$ that $h_{x,C_0}(y)=w\lambda$.

Conversely, let $y\in  V_{\lambda}(x)$ be such that $h_{x,C_0}(y)=w\lambda$. Let $\lambda \ll\nu$ in $\Sigma^+$.  Let $z$ be the (unique) point in $V_\nu(x)\cap Q(x,C_0)$. We claim that $x,y$ and $z$ are in fact in a common apartment, and therefore that $\Conv(x,z)$ and $y$ are in an apartment. 
Since $\nu$ can be taken arbitrarily large, it will follow that for every finite subset $F\subseteq Q(x,C_0)$ there exists an apartment containing $F$ and $y$. By \cite[Theorem 5.73]{AbramenkoBrown}  it follows that $Q(x,C_0)\cup\{y\}$ is contained in an apartment. In this apartment, let $C_1$ be the Weyl chamber  such that $\delta(C_0,C_1)=w$.  Since $h_{x,C_0}(y)=w\lambda$
we have $y\in Q(x,C_1)$. Therefore $C_1\in \Omega_x(y)\cap \Delta_x^w(C_0)$ and it follows that $y\in Y^w_\lambda(C_0)$.

We now have to prove the claim.  By \cite[Theorem 3.6]{Parkinson2} we have $\sigma(y,z)=\nu-h_{x,C_0}(y)$ (which belongs to $\Sigma^+$ by \cite[Theorem 3.4]{Parkinson2} because $\lambda\ll\nu$). 

Let $c_0$ be an alcove containg $z$, and let $c_1$ and $c_2$ be alcoves containing $x$ and $y$, respectively, such that $d(c_1,c_2)$ is minimal. The Weyl distance  between $c_1$ and $c_2$ is then uniquely determined by the distance $\sigma(x,y)$ (see \cite[Lemma~5.36]{AbramenkoBrown}).
Let $A$ be an apartment containing $x$ and $z$, and $\rho:X\to A$ be the retraction to $A$ centered at $c_0$. By definition we have $\rho(c_0)=c_0$ and $\rho(c_1)=c_1$. Let $y'=\rho(y)$ and $c'_2=\rho(c_2)$. Then  $\sigma(y',z)=\sigma(y,z)=\nu-h_{x,C_0}(y)=\nu-w\lambda$ by definition of the retraction, and since $\rho$ is $1$-Lipschitz we have $d(x,y')\leq d(x,y)$, and in fact $\sigma(x,y')\in \Pi_\lambda$ \cite[Theorem 3.3]{Parkinson2}.

But as $\nu \gg \lambda$, it follows that the point $y'$ is the unique point in $A\cap V_{(\nu-w\lambda)^*}(z)$ such that $\sigma(x,y')\in \Pi_\lambda$. Therefore $y'$ is the point of $A$ such that $h_{x,C}(y')=w\lambda$, and we have in particular $\sigma(x,y')=\lambda=\sigma(x,y)$. Since $d(c'_2,c_1)\leq d(c_2,c_1)$ and $c'_2$ is adjacent to $y$ it follows from the previous remark that the Weyl distance between $c_1$ and $c'_2$ is the same as the Weyl distance between $c_1$ and $c_2$. Likewise the Weyl distance between $c_0$ and $c'_2$ is the same as the Weyl distance between $c_0$ and $c_2$.
  This means that $\rho_{|c_0,c_1,c_2}$ is a $W$-isometry. By \cite[Theorem 5.73]{AbramenkoBrown} it follows that $\{x,y,z\}$ is contained in an apartment. This proves the claim, and therefore the lemma.
\end{proof}

\begin{lemma}\label{lem:mememesure}
For every special vertex $x\in X$ and every $w\in W_0$, for every $C_0,C_1\in \Delta$, we have $\mu_x(\Delta_x^w(C_0))=\mu_x(\Delta_x^w(C_1))$.
\end{lemma}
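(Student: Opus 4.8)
The plan is to prove $\mu_x(\Delta_x^w(C_0)) = \mu_x(\Delta_x^w(C_1))$ by decomposing each of these sets according to the vectorial distance $\lambda$ from $x$, and showing that the number of alcoves/vertices realizing each piece is independent of the choice of $C_0$. Concretely, I would write $\Delta_x^w(C_i) = \bigsqcup_{\lambda \in \Sigma^+} \{C \in \Delta_x^w(C_i) \mid$ the vertex $y_\lambda(C) \in Q(x,C)$ with $\sigma(x,y_\lambda(C)) = \lambda$ realizes $\Omega_x(y) \cap \Delta_x^w(C_i) \neq \varnothing\}$, and then relate $\mu_x$ of these pieces to the sets $Y_\lambda^w(x,C_i)$ from Lemma~\ref{lem:nulambda}. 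The key point is that by Lemma~\ref{lem:nulambda}, $Y_\lambda^w(x,C_i) = \{y \in V_\lambda(x) \mid h_{x,C_i}(y) = w\lambda\}$, so I need to show $|Y_\lambda^w(x,C_0)| = |Y_\lambda^w(x,C_1)|$, i.e.\ the number of points at vectorial distance $\lambda$ from $x$ whose vectorial horofunction relative to $C_i$ equals $w\lambda$ does not depend on $i$.

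For that independence, I would invoke the known combinatorics of regular affine buildings: for a fixed $\lambda$, the decomposition $V_\lambda(x) = \bigsqcup_{w \in W_0} Y_\lambda^w(x, C_0)$ has cardinalities $|Y_\lambda^w(x,C_0)|$ that are expressible purely in terms of $\lambda$, $w$, and the numerical parameters (thicknesses) of the building — this is essentially the content of the Macdonald-type formulas / the analysis in Parkinson's work \cite{Parkinson2} on which the very definition of $h_{x,C}$ rests, or can be seen directly: the isometry group (or at least the structure of retractions) acts transitively enough on Weyl chambers that $(x, C_0)$ and $(x, C_1)$ are carried to one another combinatorially. More precisely, given $C_0, C_1 \in \Delta$, there is an apartment $A$ whose boundary contains a chamber opposite to both, or one can chain through a gallery; retracting appropriately gives a type-preserving bijection between the pieces $Y_\lambda^w(x,C_0)$ and $Y_\lambda^w(x,C_1)$, because the retraction centered at (a sector representing) $C_i$ preserves $V_\lambda(x)$ and sends $h_{x,C_i}$ to the standard coordinate on $\Sigma$.

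Having established $|Y_\lambda^w(x,C_0)| = |Y_\lambda^w(x,C_1)| =: M_\lambda^w$, I would then translate back to measure: each $y \in Y_\lambda^w(x,C_i)$ contributes, via the sets $\Omega_x(y)$, a piece of $\Delta_x^w(C_i)$, and by the defining property of $\mu_x$ (namely $\mu_x(\Omega_x(z)) = 1/N_{\sigma(x,z)}$ from the Proposition preceding Lemma~\ref{lem:nulambda}) together with the fact that, for fixed $\lambda$, the sets $\{\Omega_x(y) : y \in V_\lambda(x)\}$ partition $\Delta$ up to $\mu_x$-null sets into pieces of equal measure $1/N_\lambda$, one gets $\mu_x(\Delta_x^w(C_i) \cap \{\sigma(x,\cdot)=\lambda\text{-level}\}) = M_\lambda^w / N_\lambda$, independent of $i$. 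Summing over $\lambda \in \Sigma^+$ (a countable sum, which converges since the total is at most $1$) yields $\mu_x(\Delta_x^w(C_0)) = \mu_x(\Delta_x^w(C_1))$.

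The main obstacle I anticipate is making the ``combinatorial equivalence of $(x,C_0)$ and $(x,C_1)$'' precise enough to conclude $|Y_\lambda^w(x,C_0)| = |Y_\lambda^w(x,C_1)|$ without simply invoking transitivity of an isometry group (which is not available here, since the building need not be linear — indeed that exoticness is the whole point of Theorem~\ref{theointro:building}). The resolution should be purely building-theoretic: use that any two Weyl chambers lie in a common apartment boundary, pick an apartment $A$ through $Q(x,C_0)$ and the ``straightening'' retraction $\rho_{A,C_0}$, observe it restricts to a bijection $V_\lambda(x) \to V_\lambda(x) \cap A$ intertwining $h_{x,C_0}$ with the $\Sigma$-coordinate (this is exactly the kind of statement underlying \cite[Theorem 3.4, Theorem 3.6]{Parkinson2} used in Lemma~\ref{lem:nulambda}), so that $M_\lambda^w$ counts points of $A$ with $\Sigma$-coordinate $w\lambda$ lying at vectorial distance $\lambda$ from $0$ — a quantity manifestly independent of $C_0$. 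The same retraction argument as in the proof of Lemma~\ref{lem:nulambda} (fattening $\nu \gg \lambda$ and using \cite[Theorem 5.73]{AbramenkoBrown}) controls which points of $X$ retract into $A$, giving the bijection rather than merely an inequality.
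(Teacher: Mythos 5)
Your overall strategy is the right one --- reduce the statement to the independence of the cardinalities $|Y_\lambda^w(x,C_0)|$ from $C_0$, which is ultimately a counting fact from \cite{Parkinson2} --- and this is indeed what the paper does. But two steps of your write-up are genuinely wrong as stated.

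First, the decomposition over $\lambda$ is not a partition, and the measure is not a sum. For a fixed chamber $C\in\Delta_x^w(C_0)$ and \emph{every} dominant $\lambda$, the sector $Q(x,C)$ contains a vertex $y$ with $\sigma(x,y)=\lambda$ and $C\in\Omega_x(y)\cap\Delta_x^w(C_0)$, so each of your ``$\lambda$-pieces'' is the whole of $\Delta_x^w(C_0)$; the sets $\Omega_x\bigl(Y_\lambda^w(x,C_0)\bigr)$ are nested (decreasing as $\lambda$ grows along $\ll$), not disjoint. Summing $M_\lambda^w/N_\lambda$ over all $\lambda\in\Sigma^+$ therefore massively overcounts. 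The correct statement is that for a cofinal chain $\lambda_n\ll\lambda_{n+1}$ one has $\Delta_x^w(C_0)=\bigcap_{n\in\NN}\Omega_x\bigl(Y_{\lambda_n}^w(x,C_0)\bigr)$, a decreasing intersection, whence
$\mu_x(\Delta_x^w(C_0))=\lim_{n\to+\infty}|Y_{\lambda_n}^w(x,C_0)|/N_{\lambda_n}$
(using that distinct $y,y'\in Y_{\lambda_n}^w(x,C_0)$ give disjoint sets $\Omega_x(y),\Omega_x(y')$, each of measure $1/N_{\lambda_n}$). This is a limit, not a series.

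Second, your ``direct'' justification that $|Y_\lambda^w(x,C_0)|$ is independent of $C_0$ fails: the retraction $\rho_{A,C_0}$ centered at a chamber at infinity is emphatically \emph{not} a bijection from $V_\lambda(x)$ to $V_\lambda(x)\cap A$. It collapses the sphere $V_\lambda(x)$ onto a finite subset of $A$ (with image controlled by $\Pi_\lambda$, cf.\ \cite[Theorem~3.4]{Parkinson2}) with large fibers --- already in a tree, $V_n(x)$ has on the order of $q^n$ points while $V_n(x)\cap A$ has two. Were your bijection claim true, each $Y_\lambda^w(x,C_0)$ would have at most one element and every $\mu_x(\Delta_x^w(C_0))$ would vanish in the limit, which is false for $w=w_0$. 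The quantity $|Y_\lambda^w(x,C_0)|=|\{y\in V_\lambda(x)\mid h_{x,C_0}(y)=w\lambda\}|$ is exactly a fiber cardinality of this retraction, and its independence of $C_0$ (and of $x$) is the nontrivial content of \cite[Lemma~3.19]{Parkinson2}, which you do cite as a fallback; that citation, not the bijection argument, is what carries the proof.
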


\begin{proof}
Fix $C_0\in \Delta$ and $w\in W_0$. Let $(\lambda_n)_{n\in \NN}$ be a sequence of elements of $\Sigma^+$ such that $\lambda_n \ll \lambda_{n+1}$. Then $\Delta_x^w(C_0)=\bigcap_{n\in \NN} \Omega_x(Y_{\lambda_n}^w(x,C_0))$, and furthermore this is a decreasing intersection. Notice in addition that if $y,y'\in Y_{\lambda_n}^w(x,C_0)$ are distinct, then in particular $\sigma(x,y)=\sigma(x,y')$, and therefore $\Omega_x(y)$ and $\Omega_x(y')$ are disjoint. Hence we get $$\mu_x(\Delta_x^w(C_0))=\lim\limits_{n\to+\infty} \frac{ | Y_{\lambda_n}^w(x,C_0)|}{|V_{\lambda_n}(x)|}\;.$$

Therefore, it suffices to prove that for every $n$ the cardinality  $|Y_{\lambda_n}^w(x,C_0)|$ does not depend on $C_0$. 
By Lemma \ref{lem:nulambda}, we have $$Y_{\lambda_n}^w(x,C_0)= \{ y\in V_{\lambda_n}(x) \mid h_{x,C_0}(y)=w\lambda_n\},$$ and by 
 \cite[Lemma~3.19]{Parkinson2} this quantity does not depend on the choice of $C_0$ (or $x$). This concludes our proof.
\end{proof}

\begin{proof}[Proof of Proposition \ref{prop:opposition-measure}]
Let $V$ be the set of special vertices of $X$.
Note that $\Delta=\cup_{x\in V} \Delta_x(C_0)$. In particular for every $w$ we have $\Delta^w=\cup_{x\in V} \Delta_x^w(C_0)$. Therefore it suffices to prove that for every $w\neq w_0$ and every $x\in V$, we have $\mu_o(\Delta_x^w(C_0))=0$. Since $\mu_o$ and $\mu_x$ are equivalent it suffices to prove that for every $x\in V$, we have $\mu_x(\Delta_x^w(C_0))=0$.

So fix $x\in X$ and $w\in W$, with $w\neq w_0$. In particular there exists a simplex  $u$ of $\Delta$, not of maximal dimension, such that for every $C\in \Res(u)$ and $D\in \Delta^w(C)$ we have $\proj_u(D)=C$. Hence for $C,C'\in \Res(u)$ with $C\neq C'$ we have $\Delta^w(C)\cap \Delta^w(C')=\varnothing$, in particular $\Delta_x^w(C)\cap \Delta_x^w(C')=\varnothing$.  By Lemma \ref{lem:mememesure}, if we have $\mu_x(\Delta_x^w(C))=m>0$ for some $C$ it would follow that $\mu_x(\Delta_x^w(C))=m$ for every $C\in \Res(u)$. Since $\Res(u)$ is infinite (by thickness of $X$) and $\mu_x(\Delta)<+\infty$ this is impossible.  

Hence we have $\mu_x(\Delta_x^w(C))=0$ for every chamber $C\in \Delta$ and every $w\neq w_0$, which proves the proposition.
\end{proof}

\subsubsection{Proper proximality}

Let $G$ be a countable group. An isometric $G$-action on a building $X$ is \emph{minimal} if there is no proper closed convex invariant subset of $X$.

\begin{theo}\label{thm:buildings}
Let $G$ be a countable group which admits a proper minimal isometric action on a locally finite thick affine building $X$, with no finite orbit in $\partial_\infty X$.

Then $G$ is properly proximal.
\end{theo}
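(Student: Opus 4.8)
The plan is to verify the hypotheses of the dynamical criterion in Lemma~\ref{lemma:criterion-proximality2}, using for the compact $G$-spaces the sets $\Delta_I$ of simplices of a fixed type $I\subseteq S$ in $\partial_\infty X$ (equivalently, in the spherical building $\Delta$), equipped with the probability measures $\mu_{o,I}$ obtained by pushing forward the measure $\mu_o$ of the preceding proposition along $\Delta\to\Delta_I$. Since $X$ is assumed not to split off a tree factor and the group acts minimally, every $\Delta_I$ with $I\neq\varnothing$ has uncountable residues and supports a diffuse measure; one first checks that $\mu_{o,I}$ is quasi-invariant under $G$ (this follows from the fact that $\mu_o$ and $\mu_{go}$ have the same null sets, by Parkinson's theorem, together with $G$-equivariance of the maps $\Delta\to\Delta_I$), and that none of the $\Delta_I$ carries a $G$-invariant probability measure. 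The latter uses the nonelementarity assumption: the $G$-action has no finite orbit in $\partial_\infty X$, and an invariant probability measure on $\Delta_I$ would (by a contracting-dynamics argument, as in Lemma~\ref{lemma:no-invariant-proba}) have to be supported on a finite set fixed setwise by $G$, contradicting minimality together with the absence of a finite orbit at infinity. I would also record, via Lemma~\ref{lemma:proximality}, that it suffices to prove the $G$-action on $X$ is properly proximal in the sense of Definition~\ref{de:proximal-at-infinity}, since the $G$-action is proper.

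The heart of the argument is the dynamical input: given a sequence $(g_n)\in G^\NN$ escaping every compact subset of $X$, I would extract a subsequence $(g_{\sigma(n)})$ and produce a type $I$ and an attracting simplex $\xi^+\in\Delta_I$ such that $g_{\sigma(n)}\xi\to\xi^+$ for $\mu_{o,I}$-almost every $\xi\in\Delta_I$. The first step is to pass to a subsequence so that $g_{\sigma(n)}o$ converges in $X\cup\partial_\infty X$ to some boundary point, and more precisely so that the Weyl chambers (or sectors) $Q(o, g_{\sigma(n)}^{-1}C)$ and the vectorial distances are controlled; using compactness of $\Delta$ one further arranges that there are limit Weyl chambers $C^+, C^-\in\Delta$ (an attracting and a repelling one) such that for every $C\in\Delta$ opposite to $C^-$, one has $g_{\sigma(n)}C\to C^+$. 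This is the building analogue of north-south dynamics: the mechanism is that once $g_{\sigma(n)}$ drags points far towards infinity in the $C^+$ direction, the retraction/projection picture forces any chamber not ``aligned with $C^-$'' to be pushed onto $C^+$; opposition is exactly the genericity condition making the convergence hold. Then take $I$ to be (a choice of) a type such that $\tau(\xi^+)$ — the face of type $I$ of $C^+$ — is the relevant limit, and set $\xi^+$ equal to that face in $\Delta_I$. By Proposition~\ref{prop:opposition-measure}, $\mu_o$-almost every chamber is opposite $C^-$, hence $\mu_{o,I}$-almost every simplex of type $I$ is the image of a chamber opposite $C^-$, so $g_{\sigma(n)}\xi\to\xi^+$ for $\mu_{o,I}$-a.e.\ $\xi\in\Delta_I$. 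The topology is handled by Lemma~\ref{lem:visualvscomb}, which identifies convergence in $\Delta_I$ with convergence in the visual topology, so that the limit computed combinatorially matches the one needed in the weak-$*$ argument of Lemma~\ref{lemma:criterion-proximality2}.

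With the dynamical statement in hand, Lemma~\ref{lemma:criterion-proximality2} applies directly: for each escaping sequence we get $i$ (i.e.\ a type $I$), a subsequence, and an attracting point $\xi^+$ with almost-everywhere convergence, so the $G$-action on $X$ is properly proximal; since the $G$-action on $X$ is proper, Lemma~\ref{lemma:proximality} then yields that $G$ itself is properly proximal. It remains only to observe that the displayed theorem's hypothesis — proper, minimal, no finite orbit in $\partial_\infty X$ — is exactly the nonelementarity condition feeding the no-invariant-measure step, and that our standing assumption that $X$ is thick and has no tree factor is what guarantees regularity of $X$ (so that $\mu_o$ exists with the stated properties via \cite{Parkinson1,Parkinson2}) and that the residues, hence the spaces $\Delta_I$, are large enough to carry diffuse measures.

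The step I expect to be the main obstacle is establishing the north-south-type dynamics on $\Delta$ for an arbitrary escaping sequence: a single rank one or loxodromic element has clean contracting dynamics, but for a general sequence $(g_{\sigma(n)})$ one must extract, using compactness of $\Delta$ together with control on vectorial distances $\sigma(o,g_{\sigma(n)}o)$ and on the relevant retractions, a genuine pair $(C^+,C^-)$ governing the asymptotics of \emph{every} opposite chamber simultaneously and uniformly. Making this precise — in particular ruling out ``drift'' where the limit type depends on the chamber, and checking that opposition to $C^-$ is the sharp hypothesis under which $g_{\sigma(n)}C\to C^+$ — is the technical core, and is where the structure theory of \cite{Parkinson2} (the vectorial horofunctions $h_{x,C}$ and the formulas $\sigma(y,z)=\nu-h_{x,C_0}(y)$) does the real work, in combination with Lemma~\ref{lem:visualvscomb} to transfer everything to the visual topology.
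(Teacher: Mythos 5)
Your overall strategy coincides with the paper's: use the spaces $\Delta_I$ with Parkinson's measures, quasi-invariance of $\mu_o$, Proposition~\ref{prop:opposition-measure} for genericity of opposition, Lemma~\ref{lem:visualvscomb} to pass between the combinatorial and visual topologies, and Lemma~\ref{lemma:criterion-proximality2} to conclude. But there are three genuine gaps. The main one is the step you yourself flag as the ``main obstacle'': the north--south-type dynamics of an arbitrary escaping sequence on $\Delta$. You propose to build this from scratch out of vectorial distances, retractions and horofunctions, but you do not carry it out, and it is the heart of the proof. The paper does not prove it from scratch either: it applies Papasoglu--Swenson \cite[Theorem~4]{PS} to the visual boundary equipped with the Tits metric, exactly as in Lemma~\ref{lemma:ps}. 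That theorem produces, after passing to a subsequence, points $p,m\in\partial_\infty X$ such that for every $\theta\in(0,\pi)$ and every $\xi$ with $d_T(m,\xi)>\pi-\theta$, all limit points of $(g_n\xi)_{n\in\mathbb{N}}$ lie in $B_T(p,\theta)$. Choosing $\theta$ small enough that $B_T(p,\theta)$ meets only simplices containing $p$, and observing that a chamber opposite to a chamber $C_0$ containing $m$ contains points at Tits distance exactly $\pi$ from $m$ (hence interior points at distance $>\pi-\theta$), one deduces via Lemma~\ref{lem:visualvscomb} that any limit of $(g_nC)_{n\in\mathbb{N}}$ in $\Delta$ contains $\tau(p)$, so $g_n\tau\to\sigma$ in $\Delta_I$ for the face $\tau$ of $C$ of type $I$ equal to the type of $\tau(p)$. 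No new contraction estimate is needed, and the ``drift'' issue you worry about is absorbed into this single application of \cite{PS}.

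Second, your argument that $\Delta_I$ carries no $G$-invariant probability measure does not work as stated. The contracting dynamics only control points outside a Tits ball of radius $\pi-\theta$ around $m$ (equivalently, chambers opposite to $C_0$); the exceptional set is $\mu_o$-null by Proposition~\ref{prop:opposition-measure}, but it is a large closed subset which an arbitrary invariant measure could charge entirely, so you cannot conclude that an invariant measure must be supported on a finite set. The paper instead invokes the Adams--Ballmann theorem (Theorem~\ref{theo:AB}): by Lemma~\ref{lem:visualvscomb} an invariant measure on $\Delta_I$ would yield one on $\partial_\infty X$, and then minimality forces either a fixed point at infinity (excluded by the no-finite-orbit hypothesis) or a Euclidean factor (excluded by thickness). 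Third, the standing assumption of the section that $X$ has no tree factor is not among the theorem's hypotheses, and Parkinson's regularity (hence the construction of $\mu_o$) fails in the presence of tree factors; the general case requires writing $X=Y\times Z$ with $Z$ a product of trees, treating $Z$ as a proper $\cat$ cube complex via Theorem~\ref{theo:ccc}, and recombining with Lemmas~\ref{lemma:product} and~\ref{lemma:proximality}. You omit this reduction.
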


We will be using the following general result on groups acting on $\cat$ spaces.

\begin{theo}[Adams--Ballmann \cite{AdamsBallmann}]\label{theo:AB}
Let $G$ be a group, and let $X$ be a locally compact $\cat$ space equipped with an isometric $G$-action. Assume that the action of $G$ on $X$ is minimal, that $\partial_\infty X\neq\emptyset$, and that $\partial_\infty X$ supports a $G$-invariant probability measure $\lambda$. Then at least one of the following two assertions holds:
\begin{enumerate}
	\item $G$ fixes a point in $\partial_\infty X$;
	\item $X$ has a nontrivial Euclidean factor.
\end{enumerate}
\end{theo}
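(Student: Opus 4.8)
The plan is to transfer the problem to the Tits boundary, where non-positive curvature becomes spherical curvature. Recall that $\partial_\infty X$ equipped with the Tits metric $d_T$ is a complete $\mathrm{CAT}(1)$ space, that every isometry of $X$ induces a $d_T$-isometry of $\partial_\infty X$, and that since $X$ is proper the visual boundary with its cone topology is compact metrizable, so $\lambda$ is a Radon probability measure with $G$-invariant closed support $Z:=\mathrm{supp}(\lambda)\subseteq\partial_\infty X$. I would then run a dichotomy according to the \emph{circumradius} $r_0:=\inf_{\xi}\sup_{\eta\in Z}d_T(\xi,\eta)$ of $Z$ measured inside $(\partial_\infty X,d_T)$, the threshold being $\pi/2$.

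The easy half is $r_0<\pi/2$. Here I would invoke the standard fact that a bounded subset of a complete $\mathrm{CAT}(1)$ space with circumradius $<\pi/2$ has a \emph{unique} circumcenter: on a ball of radius $<\pi/2$ the functions $d_T(\cdot,\eta)$ are convex, so $\xi\mapsto\sup_{\eta\in Z}d_T(\xi,\eta)$ is convex and strictly convex near its minimum (compare the convexity statements for $\mathrm{CAT}(1)$ balls in \cite[Ch.~II.2]{BH}), whence its minimizer $\xi_0$ is unique. Uniqueness makes $\xi_0$ a canonical point attached to the $G$-invariant set $Z$, so $\xi_0$ is fixed by $G$, giving conclusion~(1).

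The remaining case $r_0\ge\pi/2$ is where a Euclidean factor must be built, and this is the main obstacle. Here I would switch to the integrated Busemann function: fixing $o\in X$ and normalizing each Busemann function by $b_\xi(o)=0$, set $B_\lambda(x)=\int_{\partial_\infty X}b_\xi(x)\,d\lambda(\xi)$. As an average of convex $1$-Lipschitz functions, $B_\lambda$ is convex and $1$-Lipschitz, and the cocycle identity $b_{g\xi}=b_\xi\circ g^{-1}-b_\xi(g^{-1}o)$ combined with $G$-invariance of $\lambda$ yields $B_\lambda\circ g^{-1}-B_\lambda=c_g$ for a constant $c_g=B_\lambda(g^{-1}o)$, so $G$ permutes the sublevel sets of $B_\lambda$ up to the shift $c_g$. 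If $B_\lambda$ does not attain its infimum, the nested nonempty sublevel sets have empty intersection and single out a canonical $G$-invariant object at infinity, to which the circumcenter analysis of the previous paragraph applies, producing conclusion~(1). If $B_\lambda$ does attain its infimum, its minimizing set is closed, convex, nonempty and $G$-invariant, hence equals $X$ by minimality, so $B_\lambda$ is constant.

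Constancy of $B_\lambda$ is exactly the balancing condition that forces flatness: along any geodesic the convex functions $b_\xi$ must average to an affine function, which via the flat strip theorem drives $\lambda$ onto $d_T$-antipodal pairs (points at Tits distance $\pi$), each such pair bounding a flat strip in $X$; assembling these parallel flats and invoking minimality then yields a nontrivial Euclidean de Rham factor, i.e.\ conclusion~(2). I expect the genuine difficulty to lie entirely in this last step: upgrading the soft statement ``$B_\lambda$ is constant'' (equivalently ``$Z$ has circumradius $\ge\pi/2$'') into an honest Euclidean factor requires the $\mathrm{CAT}(0)$ product-decomposition machinery (parallel sets, the flat strip theorem, and the splitting of proper $\mathrm{CAT}(0)$ spaces), together with the verification that the resulting factor is nontrivial and $G$-compatible. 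This is the technical core of the Adams--Ballmann theorem \cite{AdamsBallmann}, whereas the circumcenter case is essentially formal.
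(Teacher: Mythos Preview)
Your approach is essentially the paper's approach---the integrated Busemann function $B_\lambda=\int b_\xi\,d\lambda$ is the engine in both---but the paper is more streamlined, and one of your intermediate formulations is slightly off.

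First, the circumradius dichotomy on $\mathrm{supp}(\lambda)$ is unnecessary: the paper skips it and goes straight to $B_\lambda$. The case $r_0<\pi/2$ is subsumed in the later analysis (a $G$-invariant set of small Tits radius would in any case produce a $G$-fixed point, but one never needs to isolate this). Second, where you argue via the cocycle $c_g$ and a limiting object at infinity of the sublevel sets, the paper simply invokes \cite[Lemmas~2.4 and~2.5]{AdamsBallmann}: assuming $G$ has no fixed point in $\partial_\infty X$, these lemmas give that $b=B_\lambda$ is genuinely $G$-invariant (i.e.\ $c_g\equiv 0$). Then minimality forces every sublevel set of $b$ to be all of $X$, so $b$ is constant---no separate ``does not attain its infimum'' case survives. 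Your sublevel-set route can be made to work, but the step ``the limiting object has Tits circumradius $<\pi/2$'' needs an actual argument (in general one only gets $\le\pi/2$, and the boundary case must be handled), which is exactly what those Adams--Ballmann lemmas package.

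Third, the mechanism from ``$B_\lambda$ constant'' to a Euclidean factor is not quite ``$\lambda$ is driven onto Tits-antipodal pairs''. The correct (and sharper) statement, which the paper makes explicit, is that since $B_\lambda=\int b_\xi\,d\lambda$ is constant and each $b_\xi$ is convex, for $\lambda$-almost every $\xi$ the function $b_\xi$ is \emph{affine} on $X$ (convex with convex negative). It is the existence of a non-constant affine function that triggers the splitting: by \cite[Theorem~1.6 and Lemma~1.7]{AdamsBallmann} one gets either a nontrivial Euclidean factor or a $G$-fixed boundary point. The antipodal-pair picture is a consequence of $b_\xi$ being affine, not the input to the splitting theorem.
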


\begin{proof}
We follow \cite{AdamsBallmann}. Let $x\in X$. For $\xi\in \partial_\infty X$, let $b_\xi$ be the Busemann function associated to $\xi$ satisfying $b_\xi(x)=0$.  Let $b$ be the convex function $b=\int_{\partial_\infty X} b_\xi d\lambda(\xi)$. Suppose $G$ does not fix a point in $\partial_\infty X$, by \cite[Lemmas~2.4 and~2.5]{AdamsBallmann} we deduce that $b$ is $G$-invariant. In particular, every level set of $b$ is $G$-invariant, and by minimality it follows that the function $b$ is constant. By convexity it follows that for $\lambda$-almost every $\xi$ the function $b_\xi$ is both convex and concave, hence it is affine (meaning that it is an affine function in restriction to every geodesic). Hence by \cite[Theorem~1.6 and Lemma~1.7]{AdamsBallmann} we get that either $X$ has a non-trivial flat factor or $G$ fixes a point in $\partial_\infty X$.
\end{proof}

The following is an immediate consequence of Theorem~\ref{theo:AB}, as Lemma~\ref{lem:visualvscomb} implies that if $\Delta_I$ had an invariant probability measure, then $\partial_\infty X$ would also carry an invariant probability measure.

\begin{lemma}\label{lem:building-noinvmeasure}
Under the assumptions of Theorem \ref{thm:buildings}, 
the compact spaces $\Delta_I$ do not carry any $G$-invariant probability measure. 
\qed
\end{lemma}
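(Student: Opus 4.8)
\textbf{Proof plan for Lemma~\ref{lem:building-noinvmeasure}.}

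The plan is to apply the Adams--Ballmann Theorem~\ref{theo:AB} to the $\cat$ space $X$ and the $G$-action, ruling out both of its alternatives under the hypotheses of Theorem~\ref{thm:buildings}, and then transfer the conclusion from $\partial_\infty X$ to each $\Delta_I$. First I would observe that $X$ is locally compact (it is locally finite), that the action is minimal by hypothesis, and that $\partial_\infty X\neq\emptyset$ since $X$ is an affine building of positive dimension (equivalently, contains a geodesic ray). Next, suppose towards a contradiction that some $\Delta_I$ carries a $G$-invariant probability measure $\nu$. Fixing a point $\theta$ in the interior of the model simplex $\mathfrak{c}_I$, Lemma~\ref{lem:visualvscomb} gives a $G$-equivariant homeomorphism $R_I(\theta,\cdot)$ from $\Delta_I$ onto a closed (hence Borel) subset of $\partial_\infty X$; pushing $\nu$ forward along this map produces a $G$-invariant probability measure $\lambda$ on $\partial_\infty X$. (One should check that $R_I(\theta,\cdot)$ is genuinely $G$-equivariant: the $G$-action on $X$ is by type-preserving simplicial isometries — automorphisms of the building — so it commutes with the type-preserving identifications defining $R_I$. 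This is the one point requiring a line of justification.)

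Now apply Theorem~\ref{theo:AB} with this $\lambda$. Alternative (1), that $G$ fixes a point of $\partial_\infty X$, is excluded because a fixed point is in particular a finite orbit in $\partial_\infty X$, contrary to the hypothesis of Theorem~\ref{thm:buildings}. Alternative (2), that $X$ has a nontrivial Euclidean factor, must also be excluded. Here I would invoke the standing assumption on $X$ recalled in the section on affine buildings: $X$ is thick and does not split as a product $T\times Y$ with $T$ a tree. A nontrivial Euclidean factor of a thick affine building would force $X$ to split off a one-dimensional factor, i.e.\ a line or — after taking the simplicial structure into account — to have a tree (indeed line) factor, contradicting irreducibility. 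More carefully: by the product decomposition theory for $\cat$ spaces (e.g.\ \cite[Theorem~5.1]{CM}) and the fact that an affine building is geodesically complete, a Euclidean de Rham factor of $X$ contributes a Euclidean factor to the building, but a thick affine building has no Euclidean (in particular no $\mathbb{R}$- or tree-) factor by our standing hypothesis. Either alternative thus yields a contradiction, so no $\Delta_I$ carries a $G$-invariant probability measure.

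The main obstacle I anticipate is not the Adams--Ballmann input, which is essentially a black box once its hypotheses are verified, but rather the bookkeeping needed to rule out alternative (2) cleanly — that is, making precise why ``thick affine building with no tree factor'' forbids a nontrivial Euclidean factor, and doing so at the right level of generality (allowing $X$ to be reducible as a building into higher-rank irreducible pieces, none of which is Euclidean). A secondary, more routine point is the equivariance and Borel-measurability of the embedding $R_I(\theta,\cdot)$ from Lemma~\ref{lem:visualvscomb}, which is what makes the pushforward of $\nu$ a legitimate $G$-invariant Borel probability measure on $\partial_\infty X$. Given the phrasing of the excerpt (``an immediate consequence of Theorem~\ref{theo:AB}, as Lemma~\ref{lem:visualvscomb} implies\dots''), the intended proof is short, so I would keep the write-up to the two moves above: transfer the measure via Lemma~\ref{lem:visualvscomb}, then apply Theorem~\ref{theo:AB} and discard both conclusions using the no-finite-orbit hypothesis and the no-Euclidean-factor (no tree factor, thickness) hypothesis on $X$.
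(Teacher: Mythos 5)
Your proposal is correct and follows exactly the paper's (very terse) argument: push the hypothetical invariant measure on $\Delta_I$ forward to $\partial_\infty X$ via the closed embedding $R_I(\theta,\cdot)$ of Lemma~\ref{lem:visualvscomb}, then apply Theorem~\ref{theo:AB} and rule out a fixed point at infinity by the no-finite-orbit hypothesis and a Euclidean factor by thickness together with the standing no-tree-factor assumption. If anything you are more explicit than the paper about the two points it leaves implicit (equivariance of $R_I(\theta,\cdot)$ and why a Euclidean de Rham factor is excluded), which is appropriate.
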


\begin{proof}[Proof of Theorem \ref{thm:buildings}]
First, let us assume that $X$ does not have a factor which is a tree. We will prove that the $G$-action on $X$ is properly proximal, from which the proper proximality of $G$ follows by Lemma~\ref{lemma:proximality} as the $G$-action on $X$ is proper. For this, we will apply the criterion provided by Lemma~\ref{lemma:criterion-proximality2}. The finitely many compact spaces which will witness proximality are the spaces $\Delta_I$, for $I\subseteq S$. Fix some special vertex $o\in X$. Then the spaces $\Delta_I$  are equipped with the projection $\mu_I$ of the measure $\mu=\mu_o$.

Let $(g_n)_{n\in\NN}\in G^\NN$ be a sequence of pairwise distinct elements. By \cite[Theorem~4]{PS}, up to replacing $(g_n)_{n\in \NN}$ by a subsequence, we may and shall assume that there exist points $p,m\in \partial_\infty X$ such that for every $\theta\in (0,\pi)$, for every $\xi\in \partial_\infty X\setminus B_T(m,\pi-\theta)$, every limit point of $(g_n \xi)_{n\in \NN}$ is in $B_T(p,\theta)$.  Let $I\subseteq S$ and $\phi$ in the interior of $\mathfrak c_I$ such that $p=R_I(\phi,\sigma)$ for some $\sigma\in \Delta_I$.
Choose $\theta$ small enough such that $B_T(p,\theta)$ does not intersect any simplex not containing $p$.

Let $C_0\in \Delta$ be a Weyl chamber containing $m$, and let  $C\in \Delta$ be opposite to $C_0$. Then  $C$ contains some point $q$ with $d_T(m,q)=\pi$. If $q'\in \partial_\infty X$ is such that $d_T(q,q')<\theta$ then $d_T(m,q')>\pi-\theta$, and therefore every limit point of $(g_n q')_{n\in \NN}$ will be contained in $B_T(p,\theta)$. In particular, taking $q'$ in the interior of $C$, we see by Lemma~\ref{lem:visualvscomb} that any limit point of $(g_n C)_{n\in \NN}$ (in the space $\Delta$) is  a chamber that contains $\sigma$, and therefore if $\tau$ is the face of $C$ of type $I$ then $g_n \tau$ converges to $\sigma$. This is true for every chamber $C$ opposite to $C_0$, and therefore by Proposition \ref{prop:opposition-measure} for $\mu$-almost every chamber $C$. It follows that $\mu_I$-almost every simplex $\tau$ satisfies that  $g_n\tau$ converges to $\sigma$.

Since for every $g\in G$ we have $g_*\mu_o=\mu_{g o}$, which is equivalent to $\mu_o$, the measure $\mu_{o}$ is $G$-quasi-invariant.
By Lemma~\ref{lemma:criterion-proximality2}, it follows that the $G$-action on $X$ is properly proximal.

We now turn to the general case, where $X$ is allowed to have a tree factor. In general, we can write $X=Y\times Z$, where $Z$ is a product of trees and $Y$ is a building without a tree factor, and this decomposition is $G$-invariant, so that $G$ acts by isometries on each factor.
Since the $G$-action on $X$ is minimal, so are the $G$-actions on $Y$ and $Z$ -- in particular there are no fixed points for these actions. In addition there is no finite orbit in $\partial_\infty Y$ or $\partial_\infty Z$. By the above, the $G$-action on $Y$ is properly proximal, and as $Z$ naturally has the structure of a proper $\cat$ cube complex, it follows from Theorem~\ref{theo:ccc} that $G$ has a finite-index subgroup $G^0$ such that the $G^0$-action on $Z$ is properly proximal (notice that we do not have to pass to a further invariant subcomplex here as the $G$-action on $Z$ is minimal). Lemma~\ref{lemma:product} then implies that the $G^0$-action on $X=Y\times Z$ is properly proximal. As this action is proper, Lemma~\ref{lemma:proximality} implies that $G^0$ is properly proximal, and therefore so is $G$ by \cite[Proposition~1.6]{BIP}.
\end{proof}

\subsection{Two consequences of Rank Rigidity}

The Rank Rigidity Conjecture predicts that if a countable group $G$ acts properly cocompactly on a proper geodesically complete $\cat$ space $X$, then either $G$ is a symmetric space or a Euclidean building, or $G$ contains a rank one isometry, or $G$ splits as a direct product. Combined with Theorems~\ref{theo:main} and~\ref{thm:buildings} above, this would show that every such group $G$ is properly proximal. By work of Ballmann and Brin \cite{BB}, the Rank Rigidity Conjecture is known to hold for piecewise Euclidean $2$-dimensional simplicial complexes, which leads to the following theorem.

\begin{theo}
Let $G$ be a group acting properly cocompactly nonelementarily by isometries on a $2$-dimensional piecewise Euclidean $\cat$ simplicial complex $X$. Then $G$ is properly proximal.
\end{theo}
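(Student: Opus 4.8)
The plan is to invoke the Rank Rigidity Theorem for $2$-dimensional piecewise Euclidean $\cat$ simplicial complexes established by Ballmann and Brin in \cite{BB}, and then reduce to the cases already treated in Theorems~\ref{theo:main} and~\ref{thm:buildings}. Since $G$ acts properly cocompactly on the proper $\cat$ complex $X$, and such a complex is geodesically complete (or can be made so after passing to a suitable $G$-invariant subcomplex), the Rank Rigidity Theorem of \cite{BB} applies. It asserts that one of the following holds: (a) $X$ splits nontrivially as a metric product $X = X_1 \times X_2$, with each factor itself a piecewise Euclidean $\cat$ complex on which a finite-index subgroup acts; (b) $X$ contains a rank one geodesic preserved by an element of $G$, hence $G$ contains a rank one isometry; or (c) $X$ is a Euclidean building (and in the $2$-dimensional case, a product of trees or an affine building of type $\widetilde{A}_2$, $\widetilde{B}_2$ or $\widetilde{G}_2$, necessarily thick since $G$ acts cocompactly and the action is nonelementary).

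First I would reduce to an irreducible situation. By iterating the product decomposition (a) — which terminates by finite-dimensionality of $X$ — I obtain a decomposition $X = X_1 \times \dots \times X_k$ preserved by a finite-index subgroup $G^0 \subseteq G$, where each $X_i$ is an irreducible $2$-dimensional (or lower-dimensional) piecewise Euclidean $\cat$ complex, and the projection $\pi_i(G^0)$ acts properly cocompactly on $X_i$. I would first discard any Euclidean factor: if some $X_i$ is a Euclidean factor then it contributes to a flat factor of $X$, but the nonelementarity of the $G$-action (no fixed point in $X$, no finite orbit in $\partial_\infty X$) combined with properness and cocompactness forces $X$ to have no Euclidean factor, by Theorem~\ref{theo:AB} of Adams--Ballmann together with the fact that a cocompact group action on a complex with a Euclidean factor would have a finite orbit at infinity after passing to a finite-index subgroup. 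Next, for each remaining irreducible factor $X_i$, the action of $\pi_i(G^0)$ on $X_i$ is again nonelementary (minimality after passing to a $G^0$-invariant subcomplex, no fixed point at infinity); applying Rank Rigidity to $X_i$ and using irreducibility, either $\pi_i(G^0)$ contains a rank one isometry of $X_i$, or $X_i$ is a tree, or $X_i$ is a thick affine building of type $\widetilde{A}_2$, $\widetilde{B}_2$ or $\widetilde{G}_2$.

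Then I would assemble the conclusion. Separate the indices $i \in \{1,\dots,k\}$ into those for which $X_i$ has a rank one isometry — for these Theorem~\ref{theo:rank-one} gives that the $\pi_i(G^0)$-action on $X_i$ is properly proximal (one must check nonelementarity and properness, the latter following from properness of the $G^0$-action on the product) — and those for which $X_i$ is a building (a tree being a rank one building, these can be absorbed either way); for the building factors, Theorem~\ref{thm:buildings}, or rather its proof, gives that a finite-index subgroup of $\pi_i(G^0)$ acts properly proximally on $X_i$, after passing to a minimal invariant subcomplex on which there is no finite orbit at infinity. Passing to a common finite-index subgroup $G^1 \subseteq G^0$ that preserves all the chosen subcomplexes and splittings, Lemma~\ref{lemma:product} yields that the $G^1$-action on the product $X_1' \times \dots \times X_k'$ is properly proximal; since the $G^1$-action on this product is proper (it is a $G^1$-invariant subcomplex of $X$), Lemma~\ref{lemma:proximality} gives that $G^1$ is properly proximal, and finally $G$ is properly proximal by stability under finite-index overgroups \cite[Proposition~1.6]{BIP}.

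The main obstacle I expect is twofold. First, the bookkeeping around geodesic completeness, minimality and passing to invariant subcomplexes: the Rank Rigidity Theorem of \cite{BB} is typically stated for geodesically complete complexes, and the various reductions (essential core, minimal invariant subcomplex) must be carried out compatibly so that at the end one genuinely has a single $G^1$-invariant subcomplex that is a product of pieces to which Theorems~\ref{theo:rank-one} and~\ref{thm:buildings} apply — one has to make sure properness and nonelementarity (in particular the no-finite-orbit-at-infinity condition) are preserved at each step, which is where Theorem~\ref{theo:AB} is used to rule out Euclidean factors. Second, correctly citing and applying the $2$-dimensional Rank Rigidity result so that the building case is exactly the thick affine building case covered by Theorem~\ref{thm:buildings} (in particular verifying thickness follows from cocompactness, and that the relevant subgroup still acts minimally with no finite orbit at infinity); the product-of-trees sub-case of a building factor is handled by Corollary~\ref{corintro:ccc}/Theorem~\ref{theo:ccc} exactly as in the final paragraph of the proof of Theorem~\ref{thm:buildings}.
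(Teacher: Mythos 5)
Your proposal is correct and follows essentially the same route as the paper: invoke the two-dimensional Rank Rigidity Theorem of Ballmann--Brin and split into the rank one, product-of-trees, and thick affine building cases, handled respectively by Theorem~\ref{theo:main}, Theorem~\ref{theo:ccc}, and Theorem~\ref{thm:buildings}. Most of your extra bookkeeping is unnecessary, since \cite[Theorem~C]{BB} already delivers the trichotomy in the clean form (rank one element, product of two trees, or thick Euclidean building), and minimality in the building case follows directly from \cite[Lemma~3.13]{CM} rather than from passing to invariant subcomplexes.
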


\begin{proof}
By a theorem of Ballmann and Brin \cite[Theorem~C]{BB}, one of the following three situations occur: 
\begin{enumerate}
\item $G$ contains a rank one isometry,
\item $X$ is a direct product of two trees,
\item $X$ is a thick Euclidean building.
\end{enumerate}
In the first situation, proper proximality of $G$ follows from Theorem~\ref{theo:main}. In the second situation $X$ has a natural structure of a $\cat$ cube complex, and proper proximality follows from Theorem~\ref{theo:ccc}. In the third situation, note that by \cite[Lemma~3.13]{CM} the action of $G$ on $X$ is minimal. The
proper proximality of $G$ follows from Theorem~\ref{thm:buildings}.
\end{proof}

The Rank Rigidity conjecture is also verified for groups acting on Riemannian manifolds (see \cite{Bal2} or \cite{BS}), leading to the following theorem.

\begin{theo}
Let $M$ be a complete, simply connected Riemannian manifold of non-positive sectional curvature without a flat factor. Let $G$ be a group acting properly cocompactly nonelementarily by isometries on $M$. Then $G$ is properly proximal.
\end{theo}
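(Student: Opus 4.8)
The plan is to combine Rank Rigidity for Hadamard manifolds with the de Rham decomposition in order to reduce to the two situations already handled in this section: factors carrying a rank one isometry (Theorem~\ref{theo:rank-one}), and higher rank irreducible symmetric spaces (where the argument of Theorem~\ref{thm:buildings} goes through). Note first that $M$ is a Hadamard manifold, hence proper and geodesically complete, and that since the $G$-action is proper, $G$ is countable. By Rank Rigidity \cite{Bal2,BS}, as $M$ has cocompact isometry group and no Euclidean factor, it decomposes isometrically as $M=M_1\times\dots\times M_k$ where each $M_i$ is an irreducible Hadamard manifold which is either of rank one or a symmetric space of noncompact type of rank $\ge 2$; these two possibilities are mutually exclusive and exhaustive, and $\dim M_i\ge 2$. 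By essential uniqueness of the de Rham decomposition, $G$ permutes the $M_i$, so a finite-index subgroup $G^0\le G$ preserves each factor; thus $G^0\le\mathrm{Isom}(M_1)\times\dots\times\mathrm{Isom}(M_k)$, and I write $G_i=\pi_i(G^0)$ for its (countable) $i$-th coordinate projection. Using that $G^0$ acts cocompactly and nonelementarily on $M$, one checks routinely that each $G_i$ acts coboundedly, minimally and nonelementarily on $M_i$: a $G_i$-invariant proper nonempty closed convex subset of $M_i$, or a finite $G_i$-orbit in $\partial_\infty M_i$, would produce the corresponding forbidden object for the cocompact nonelementary action of $G^0$ on $M$, and a coarsely dense $G^0$-orbit in $M$ projects to a coarsely dense $G_i$-orbit in $M_i$.

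Consider first the factors $M_i$ of rank one. Since $G_i$ acts coboundedly and nonelementarily on the rank one Hadamard manifold $M_i$, it contains a rank one isometry --- this is classical (alternatively, $\mathrm{Isom}(M_i)$ contains one by \cite{Bal}, and since the limit set of $G_i$ is all of $\partial_\infty M_i$ one finds a rank one element already in $G_i$). By Theorem~\ref{theo:rank-one}, the $G_i$-action on $M_i$ is properly proximal.

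Now consider the factors $M_i$ that are symmetric spaces of noncompact type of rank $\ge 2$. Here $S_i:=\mathrm{Isom}(M_i)$ is a semisimple Lie group whose identity component is simple of real rank $\ge 2$ and acts transitively on $M_i=S_i^0/K_i$, and $G_i\le S_i$ is countable but possibly non-discrete. The plan is to mirror the proof of Theorem~\ref{thm:buildings}. The Tits boundary $\partial_\infty M_i$ is a thick spherical building; the finitely many partial flag manifolds $S_i/P$ are compact metrizable $G_i$-spaces carrying the $K_i$-invariant probability measures, which are $S_i$- and hence $G_i$-quasi-invariant. There is no $G_i$-invariant probability measure on any $S_i/P$: such a measure would push forward, via a choice of interior point of the corresponding model simplex, to a $G_i$-invariant probability measure on $\partial_\infty M_i$, which is ruled out by the Adams--Ballmann Theorem~\ref{theo:AB} applied to the minimal nonelementary action of $G_i$ on the Euclidean-factor-free space $M_i$ --- exactly as in Lemma~\ref{lem:building-noinvmeasure}. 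The role of Proposition~\ref{prop:opposition-measure} is played here by the Bruhat decomposition, which shows that with respect to the $K_i$-invariant measure on the space of chambers of $\partial_\infty M_i$, almost every chamber is opposite to any given one. Finally, the contraction dynamics for a sequence $(g_n)\in G_i^{\mathbb{N}}$ escaping every compact subset of $M_i$ follows from \cite[Theorem~4]{PS} applied to the $\cat$ space $M_i$, together with the evident symmetric-space analogue of Lemma~\ref{lem:visualvscomb}, just as in the building case. Lemma~\ref{lemma:criterion-proximality2} then yields that the $G_i$-action on $M_i$ is properly proximal. (When $G_i$ happens to be discrete it is a cocompact lattice in $S_i$ and one may instead invoke the results of \cite{BIP} on lattices in semisimple Lie groups.)

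To conclude: each $G_i$-action on $M_i$ is properly proximal, and $G^0$ is a countable subgroup of $G_1\times\dots\times G_k$ projecting onto each factor, so Lemma~\ref{lemma:product} shows the $G^0$-action on $M=M_1\times\dots\times M_k$ is properly proximal; since $G^0$ has finite index in $G$ it acts properly on $M$, so Lemma~\ref{lemma:proximality} gives that $G^0$ is properly proximal, and hence so is $G$ by \cite[Proposition~1.6]{BIP}. The step I expect to be the main obstacle is the symmetric space case: because an irreducible cocompact lattice in a product of Lie groups may project densely to a factor, the projection $G_i$ of $G^0$ to $\mathrm{Isom}(M_i)$ need not be a lattice, so one cannot simply quote \cite{BIP} but must carry out the flag-manifold dynamical argument of Theorem~\ref{thm:buildings} directly; the remaining steps are routine combinations of results established above.
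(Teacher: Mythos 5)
Your proposal is correct and follows the same overall architecture as the paper: de Rham decomposition plus Rank Rigidity, passage to a finite-index subgroup preserving the factors, proper proximality of each factor action, and then Lemmas~\ref{lemma:product} and~\ref{lemma:proximality} together with finite-index stability from \cite[Proposition~1.6]{BIP}. The two places where you diverge are in how the factor actions are handled. For the rank one factors, where you appeal to a ``classical'' limit-set argument, the paper gives a precise mechanism: a proper cocompact action satisfies the Chen--Eberlein duality condition \cite{ChenEberlein}, the duality condition passes to each factor action $G_i\actson M_i$, and \cite[Theorem~3.4]{Bal} then produces a rank one isometry in $G_i$; this is worth adopting, since it is exactly the statement you need and avoids any discreteness or full-limit-set discussion for the possibly non-discrete projection $G_i$. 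For the higher rank symmetric space factors --- the step you flag as the main obstacle --- the paper does not redo the flag-manifold dynamics from scratch as you propose. Instead it observes that minimality (obtained by first replacing $M$ by a minimal invariant submanifold) forces $G_i$ to be Zariski dense in the simple Lie group $\Isom(M_i)$ \cite{Mostow}, and then invokes the \emph{proof} (not the statement) of \cite[Proposition~4.14]{BIP}, which only uses Zariski density and therefore applies even though $G_i$ need not be a lattice --- precisely the worry you raise at the end. Your plan of running the analogue of the argument of Theorem~\ref{thm:buildings} on the partial flag manifolds, with the Bruhat decomposition replacing Proposition~\ref{prop:opposition-measure} and Theorem~\ref{theo:AB} ruling out invariant measures, is a viable and more self-contained alternative, at the cost of re-proving a symmetric-space analogue of Lemma~\ref{lem:visualvscomb}; the paper's citation is the shorter route. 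One small caution: your ``routine'' verification that cocompactness of $G^0\actson M$ forces minimality of each $G_i\actson M_i$ implicitly uses geodesic completeness of Hadamard manifolds (a convex subset at finite Hausdorff distance from a geodesically complete space is the whole space); the paper sidesteps this by performing the reduction to a minimal invariant submanifold before decomposing.
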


\begin{proof}
Up to replacing $M$ by a submanifold, we may and shall assume that $M$ does not contain any complete $G$-invariant nontrivial strict submanifold.
The de Rham decomposition \cite[Th\'eor\`eme~III]{deRham} yields a canonical decomposition of $M$ as a product of irreducible Riemannian manifolds $M=M_1\times \dots\times M_n$, and an isometry of $M$ restricts to an isometry of each $M_i$ (up to possibly permuting the isometric factors). Furthermore by assumption no $M_i$ is isometric to some Euclidean space. By \cite{Bal2}, each $M_i$ is then either a higher rank symmetric space of noncompact type, or a manifold of rank one. 

Replacing if necessary the group $G$ by a subgroup of finite index, we can assume that $G$ preserves each factor $M_i$, that is, we can write $G=G_1\times\dots\times G_n$ where each $G_i$ acts on $M_i$. Recall that the action of $G$ on $M$ satisfies the \emph{duality condition} of Chen and Eberlein \cite{ChenEberlein} if for every  geodesic $\sigma:\mathbb R\to M$ with endpoints $\sigma^-,\sigma^+\in\partial_\infty M$ there exists a sequence $(g_n)_{n\in\NN}\in G^\NN$ such that for some (hence every) $x\in M$, one has $g_n(x)\to \sigma^+$ and $g_n^{-1}(x)\to \sigma^-$. Since $G$ acts properly and cocompactly on $M$, it follows that the action of $G$ on $M$ satisfies the duality condition, see \cite[p.45]{Bal}.  Therefore the action of  every $G_i$ on $M_i$ also satisfies the duality condition. 

Let $i\leq n$. Assume first that $M_i$ is of rank one. Since the $G_i$-action on $M_i$ satisfies the duality condition, it follows from \cite[Theorem 3.4]{Bal} that there exists a rank one isometry in $G_i$ (acting on $M_i$). Since the action of $G$ on $M$ is nonelementary and has no invariant complete proper submanifold, the action of $G_i$ on $M_i$ is again nonelementary. Hence the action of  $G_i$ on $M_i$  is properly proximal by Theorem~\ref{theo:rank-one}.

If $M_i$ is a higher rank symmetric space of noncompact type, then $H_i=\Isom(M_i)$ is a simple Lie group of higher rank. Let $H_i^0$ be its connected component of the identity, and let $G_i^0=H_i^0\cap G_i$. Then $G_i^0$ is a finite index subgroup of $G_i$. Furthermore the action of $G_i$ on $M_i$ is still nonelementary, and by assumption there is no $G_i$-invariant complete submanifold of $M_i$. Hence $G_i$ is a Zariski-dense subgroup of $H_i$ \cite{Mostow}, and therefore $G_i^0$ is a Zariski-dense subgroup of $G_i$. It then follows from the proof of \cite[Proposition~4.14]{BIP} that the action of $G_i^0$ on $M_i$ is properly proximal. 

If $M_i$ is of rank 1, define $G_i^0=G_i$, and let $G^0=G_1^0\times \dots\times G_n^0$. By Lemma \ref{lemma:product}, the action of $G^0$ on $M$ is properly proximal, and by Lemma~\ref{lemma:proximality} it follows that $G^0$ is properly proximal. Since $G^0$ is of finite index in $G$, we deduce that $G$ is properly proximal by \cite[Proposition~1.6]{BIP}.
\end{proof}

\section{Proper proximality among hierarchically hyperbolic groups}\label{sec:hhs}

The goal of this section is to prove our theorem concerning proper proximality for hierarchically hyperbolic groups. We refer to \cite[Definition~1.1]{BHS} for all relevant definitions in the statement -- see also the brief recap in Section~\ref{sec:def-hhs} below. Every hierarchically hyperbolic space comes with a factor set $\mathfrak{S}$, and (unless $\mathfrak{S}=\emptyset$) with a \emph{principal} hyperbolic space, denoted $\calc S_0$ in the sequel. We say that the $G$-action on $(\calx,\mathfrak{S})$ is \emph{nonelementary} if $\mathfrak{S}\neq\emptyset$ and $G$ acts on $\calc S_0$ with two independent loxodromic isometries (in particular $\calc S_0$ is unbounded). Every hierarchically hyperbolic space also comes with a map $\pi_{S_0}:\calx\to\calc S_0$. We say that the $G$-action on $(\calx,\mathfrak{S})$ \emph{has an almost equivariant principal projection} if there exists $C\ge 0$ such that for every $x\in\calx$ and every $g\in G$, one has $d_{\calc S_0}(\pi_{S_0}(gx),g\pi_{S_0}(x))\le C$.

\begin{theo}\label{theo:hhs}
Let $(\calx,\mathfrak{S})$ be a hierarchically hyperbolic space, with $\calx$ proper.  
Let $G$ be a countable group acting nonelementarily by HHS automorphisms on $(\calx,\mathfrak{S})$, so that the $G$-action on $\calx$ is by uniform quasi-isometries, and has an almost equivariant principal projection. Then the $G$-action on $\calx$ is properly proximal.

If in addition the $G$-action on $\calx$ is proper, then $G$ is properly proximal.
\end{theo}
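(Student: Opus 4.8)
## Proof strategy for Theorem~\ref{theo:hhs}

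The plan is to reduce the statement to the north–south-type dynamical criterion of Lemma~\ref{lemma:north-south}, using as the ambient compact space a suitable compactification of $\calx$ on which $G$ acts by homeomorphisms. The natural candidate is the compactification $\overline{\calx}=\calx\cup\partial\calx$ constructed by Durham, Hagen and Sisto in \cite{DHS}: since $\calx$ is proper and $\mathfrak{S}$ is countable, this is a compact metrizable space, and the fact that $G$ acts on $\calx$ by uniform quasi-isometries preserving the hierarchical structure (together with the almost-equivariance of the principal projection) should guarantee that the $G$-action extends continuously to $\overline{\calx}$. Granting this, I would set $K=\overline{\calx}$ and look for a $G$-invariant Borel subset $K^*\subseteq\partial\calx$ containing a Cantor set, together with the required subsequential north–south behaviour.

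The key geometric input is that the Gromov boundary $\partial\calc S_0$ of the principal hyperbolic space embeds naturally and $G$-equivariantly into $\partial\calx$; this is exactly the point of the sentence in the introduction saying that $K^*$ is ``a Cantor set in the Gromov boundary of the main hyperbolic space''. So the first real step is: (i) recall/assemble from \cite{DHS} the embedding $\partial\calc S_0\hookrightarrow\partial\calx$, and check it is a topological embedding and is equivariant for the (continuous extension of the) $G$-action, using that $\pi_{S_0}$ is coarsely $G$-equivariant and coarsely Lipschitz. Then (ii), since $G$ acts nonelementarily on $\calc S_0$ (two independent loxodromics), the limit set of $G$ in $\partial\calc S_0$ is an uncountable perfect compact set, hence contains a Cantor set; pushing forward through the embedding and taking the $G$-orbit of a Cantor subset gives the desired $G$-invariant Borel $K^*\subseteq\partial\calx$ containing a Cantor set.

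The heart of the argument — and the step I expect to be the main obstacle — is the dynamical statement (Proposition~\ref{prop:dynamics-boundary-hhs} in the paper's numbering): given a sequence $(g_n)$ escaping every compact subset of $\calx$, one must produce a subsequence $(g_{\sigma(n)})$ and points $\xi^-,\xi^+\in\overline{\calx}$ so that $g_{\sigma(n)}\xi\to\xi^+$ for every $\xi\in K^*\setminus\{\xi^-\}$. The natural route: because $\pi_{S_0}$ is coarsely Lipschitz and coarsely $G$-equivariant, $(\pi_{S_0}(g_n o))$ is a sequence in the hyperbolic space $\calc S_0$; if it stays bounded one has to rule this out or handle it separately (it should force $(g_n)$ to fail to escape compacta, perhaps after using that the HHS structure controls $\calx$ by its coordinate spaces and that the $G$-action on $\calx$ is by uniform quasi-isometries), and otherwise, passing to a subsequence, $\pi_{S_0}(g_n o)$ converges to some $\xi^+\in\partial\calc S_0$ and $\pi_{S_0}(g_n^{-1}o)$ to some $\xi^-$. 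Standard hyperbolic north–south dynamics then gives $g_{\sigma(n)}\eta\to\xi^+$ for all $\eta\in\partial\calc S_0\setminus\{\xi^-\}$, and the delicate point is to upgrade this convergence \emph{in $\partial\calc S_0$} to convergence \emph{in $\overline{\calx}$} of $g_{\sigma(n)}\xi$ for $\xi\in K^*\subseteq\partial\calc S_0\subseteq\partial\calx$ — i.e.\ to check that the topology on $\partial\calx$ restricted to the image of $\partial\calc S_0$ is compatible with convergence, and that no other coordinate space can ``capture'' the limit. This compatibility should follow from the description of convergence in $\overline{\calx}$ in \cite{DHS} in terms of the coordinate projections, using that $\calc S_0$ is the principal (hence ``dominant'') factor; it is here that one must be careful about the distinction between convergence of $\pi_{S_0}$-coordinates and convergence in the full boundary.

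Once the dynamical criterion is verified, it remains only to check that $K=\overline{\calx}$ carries no $G$-invariant probability measure: any such measure would, by Lemma~\ref{lemma:measures} applied to a single loxodromic $g\in G$ acting on $\calc S_0$, be supported on the two-point fixed set $\{g^{+},g^{-}\}\subseteq\partial\calc S_0\subseteq\partial\calx$ together with possibly the part of $\overline{\calx}$ not seen by $\pi_{S_0}$; using a second independent loxodromic and the almost-equivariance of $\pi_{S_0}$ one rules out an invariant measure, exactly as in Lemma~\ref{lemma:no-invariant-proba}. Then Lemma~\ref{lemma:north-south} gives that the $G$-action on $\calx$ is properly proximal, and Lemma~\ref{lemma:proximality} upgrades this to proper proximality of $G$ when the action is proper, completing the proof.
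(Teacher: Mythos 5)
Your overall strategy coincides with the paper's: take $K=\calx\cup\partial\calx$ with the Durham--Hagen--Sisto topology, take $K^*=\partial_\infty\calc S_0$ viewed inside $\partial\calx$ (it contains a Cantor set by nonelementarity), rule out invariant measures using two independent loxodromics, and feed everything into Lemma~\ref{lemma:north-south}; the dynamical statement you isolate is exactly Proposition~\ref{prop:dynamics-boundary-hhs}. However, there is a genuine gap in your treatment of that dynamical statement. You propose to produce the attracting point $\xi^+$ as a limit of $\pi_{S_0}(g_n o)$ in $\calc S_0\cup\partial_\infty\calc S_0$, and to dispose of the case where $\pi_{S_0}(g_n o)$ stays bounded by asserting that this ``should force $(g_n)$ to fail to escape compacta.'' That claim is false: the distance formula for hierarchically hyperbolic spaces sums projections over \emph{all} domains, so a sequence can escape every compact subset of $\calx$ while its image in $\calc S_0$ remains bounded (powers of a Dehn twist acting on the marking graph of a surface are the standard example). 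Consequently the attracting point cannot in general be found inside $\partial_\infty\calc S_0$.

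The paper avoids this by extracting $\xi^{\pm}$ as limits of $g_n^{\pm 1}x$ in the compact space $\calx\cup\partial\calx$ itself, and then splitting into two cases. The case $\xi^+\in\partial_\infty\calc S_0$ is the one your sketch covers: hyperbolic north--south dynamics in $\calc S_0$ plus the fact that $\partial_\infty\calc S_0\hookrightarrow\partial\calx$ is a topological embedding. The case $\xi^+\notin\partial_\infty\calc S_0$, where $\xi^+=\sum_S a_S\xi^+_S$ is supported on other domains, is the technical core of the proof: one must show that for $\xi\in\partial_\infty\calc S_0\setminus\{\xi^-\}$ the boundary projections $\partial\pi_S(g_n\xi)$ stay uniformly close to $\pi_S(g_n\tilde y)$ for a suitable $\tilde y\in\calx$ and all domains $S$ in, or orthogonal to, the support of $\xi^+$, and then invoke the convergence criteria for $\partial\calx$; this uses the bounded geodesic image and consistency axioms in an essential way. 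Your proposal does not address this case, and the reduction you suggest in its place does not hold, so the argument is incomplete as written.
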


Notice that as in the $\cat$ setting (see Theorem~\ref{theo:main}), one can also derive a statement for product actions by combining Theorem~\ref{theo:hhs} with Lemmas~\ref{lemma:proximality} and~\ref{lemma:product}.

\begin{rk}
We would like to briefly comment on the seemingly technical assumption in the theorem --  the fact that the $G$-action on $\calx$ is by uniform quasi-isometries and has an almost equivariant principal projection. This does not formally follow from the definition of an action by HHS automorphisms, but are natural to add in many situations, see e.g.\ the paragraph after \cite[Definition~1.11]{DHS}. These assumptions are quite harmless in the sense that they hold in all interesting situations we have in mind: typically, in the case of the mapping class group of a connected, orientable surface $\Sigma$ of finite type, the collection of all isotopy classes of essential subsurfaces of $\Sigma$ is countable, and the action of $\Mod(\Sigma)$ on the marking graph is by isometries, with an almost equivariant projection map to the curve graph.  
\end{rk}

\subsection{Review on hierarchically hyperbolic spaces and their boundaries}\label{sec:def-hhs}

We refer the reader to \cite[Definition~1.1]{BHS} for the full definition of a hierarchically hyperbolic space, and simply recall the setting and the facts that will be relevant for us, with an emphasis on the mapping class group case -- see \cite[Theorem~11.1]{BHS} for why mapping class groups are hierarchically hyperbolic.

\paragraph*{Hierarchically hyperbolic spaces.} A hierarchically hyperbolic space comes as a pair $(\calx,\mathfrak{S})$, where $\calx$ is a quasigeodesic metric space (taken to be the marking graph in the mapping class group setting), and $\mathfrak{S}$ is an index set (the set of all isotopy classes of essential subsurfaces). From now on we will assume that $\mathfrak{S}\neq\emptyset$. The set $\mathfrak{S}$ has a distinguished element $S_0$, namely, the unique $\sqsubseteq$-maximal element in the notation from \cite{BHS} (for surfaces, $S_0=\Sigma$ is the whole surface). For every $S\in\mathfrak{S}$, we have a $\delta$-hyperbolic space $\calc S$, for some uniform $\delta\ge 0$ (the curve graph of the subsurface $S$, whose hyperbolicity was proved by Masur and Minsky in \cite{MM1}). 

For every $S\in\mathfrak{S}$, there is a coarsely Lipschitz (with uniform constants) map $\pi_S:\calx\to 2^{\calc S}$ with nonempty values, and there exists $K\ge 0$ such that for every $S\in\mathfrak{S}$ and every $x\in \calx$, one has $\diam_{\calc S}(\pi_S(x))\le K$. In the surface setting, the maps $\pi_S$ correspond to subsurface projections as introduced by Masur and Minsky in \cite{MM}. In addition, for every $S\neq S_0$, there is a projection map $\rho_S^{S_0}:\calc S_0\to 2^{\calc S}$ with nonempty values. Again, in the mapping class group setting, this is defined from subsurface projection: a possibility is to define $\rho_S^{S_0}(c)$ as the set of all isotopy classes of curves $c'$ on $S$ that are disjoint from some connected component of $S\cap c$; this has bounded diameter in $\calc S$ whenever $S\cap c$ is nonempty -- if this intersection is empty on simply lets $\rho_S^{S_0}(c)=V(\calc S)$, which then has unbounded diameter. Conversely, there is a bounded region $X_S\subseteq \calc S_0$ associated to every $S\in\mathfrak{S}$ (for surfaces, one can take $X_S$ to be the finite set of all isotopy classes of boundary curves of $S$). More generally, there are projection maps between various sets $\calc S$, but they will not play any role in the present paper. These projections satisfy several consistency relations. The following version of one of these relations will be important for us: there exists a constant $\kappa_0\ge 0$ such that for every $x\in\calx$ and every $S\in\mathfrak{S}$, either $d_{\calc S_0}(\pi_{S_0}(x),X_{S})\le\kappa_0$ or $\diam_{\calc S}(\pi_S(x)\cup\rho_S^{S_0}(\pi_{S_0}(x)))\le\kappa_0$.

There is a relation of \emph{orthogonality} between elements of $\mathfrak{S}$: for surfaces, two subsurfaces are orthogonal whenever they have disjoint representatives in their respective isotopy classes. There is a bound on the cardinality of a set of pairwise orthogonal elements of $\mathfrak{S}$.

We will also need the following fact, which follows from the axiom of partial realization from \cite[Definition~1.1]{BHS}: there exists $\alpha\ge 0$ such that for every $p\in\calc S_0$, there exists $\tilde{p}\in\calx$ such that $d_{\calc S_0}(\pi_{S_0}(\tilde{p}),p)\le\alpha$.

Finally, we will need the following simplified version of the bounded geodesic image axiom -- only stated here for projections from $\calc S_0$ to the various $\calc S$: there exists $E\ge 0$ such that for every $S\in\mathfrak{S}$, and every geodesic $\gamma$ in $\calc S_0$, either $\diam_{\calc S}(\rho^{S_0}_S(\gamma))\le E$, or else $\gamma$ meets the $E$-neighborhood of $X_S$ in $\calc S_0$. In the mapping class group setting, the bounded geodesic image theorem was proved by Masur and Minsky in \cite[Theorem~3.1]{MM}.

\paragraph*{Automorphisms and nonelementarity.} We refer to \cite[Definition~1.21]{BHS} for the notion of an \emph{automorphism} of a hierarchically hyperbolic space, referred to as a \emph{HHS automorphism} in the present paper (roughly, this is a map that preserves the hierarchical structure and has a quasi-inverse that also does so). Every automorphism of a hierarchically hyperbolic space comes with a permutation $\phi$ of the index set $\mathfrak{S}$ and a family of isometries from $\calc S$ to $\calc \phi(S)$, one for each $S\in\mathfrak{S}$. Given a group $G$ acting on $(\calx,\mathfrak{S})$ by automorphisms, we say that the $G$-action on $(\calx,\mathfrak{S})$ is \emph{nonelementary} if $G$ contains two elements that act as independent loxodromic isometries on $\calc S_0$ (i.e.\ with disjoint fixed sets in the boundary). 

\paragraph*{Boundary of a hierarchically hyperbolic space.} The \emph{HHS boundary} of a hierarchically hyperbolic quasigeodesic metric space $\calx$ was defined by Durham, Hagen and Sisto in \cite{DHS}. Every point $\xi\in\partial\calx$ is represented as a sum $\sum_S a_S^\xi \xi_S$, taken over a nonempty finite set of pairwise orthogonal $S\in\mathfrak{S}$, where each $\xi_S$ is a point in $\partial_\infty \calc S$, and each $a_S^\xi$ is a positive real number, with $\sum_S a_S^\xi=1$. The \emph{support} of $\xi$, denoted by $\Supp(\xi)$, is defined as the finite set of all $S\in\mathfrak{S}$ that arise in the sum which defines $\xi$. By convention, when $S$ does not belong to the support of $\xi$, we let $a_S^\xi=0$. 

We start by giving a criterion for when a sequence $(x_n)_{n\in\mathbb{N}}\in\calx^\mathbb{N}$ converges to a boundary point, which is a consequence of \cite[Definition~2.10]{DHS}.

\begin{fact}\label{fact:convergence-criterion-interior}
Let $(x_n)_{n\in\mathbb{N}}\in\calx^\mathbb{N}$, let $\xi=\sum_{S}a_S^\xi \xi_S\in\partial\calx$. Then $(x_n)_{n\in\mathbb{N}}$ converges to $\xi$ if and only if the following two conditions hold:
\begin{enumerate}
\item for every $S\in\Supp(\xi)$, the sequence $(\pi_S(x_n))_{n\in\mathbb{N}}$ converges to $\xi_S$ in $\calc S\cup\partial_\infty\calc S$,
\item for every $S\in\mathfrak{S}$ which is either equal to an element of $\Supp(\xi)$, or orthogonal to every element of $\Supp(\xi)$, for every $S'\in\Supp(\xi)$, one has 
$$\frac{d_{\calc S}(\pi_S(x_0),\pi_S(x_n))}{d_{\calc S'}(\pi_{S'}(x_0),\pi_{S'}(x_n))}\to \frac{a_S^\xi}{a_{S'}^\xi}$$
as $n$ goes to $+\infty$. 
\end{enumerate}  
\end{fact}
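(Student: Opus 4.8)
The plan is to unpack the definition of the boundary topology on $\overline{\calx}=\calx\cup\partial\calx$ given by Durham, Hagen and Sisto, and to check that, for a sequence of \emph{interior} points, eventual membership in every basic neighborhood of $\xi$ is equivalent to the conjunction of conditions (1) and (2). I would begin by recalling the shape of the neighborhood basis from \cite[Definition~2.10]{DHS}: a basic neighborhood of $\xi=\sum_{S\in\overline{S}}a_S^\xi\xi_S$ is determined by a tolerance $\epsilon>0$, a threshold $R>0$, and for each $S\in\overline{S}$ an open neighborhood $V_S$ of $\xi_S$ in $\calc S\cup\partial_\infty\calc S$; after unwinding the definition, an interior point $y$ lies in this neighborhood precisely when its projection $\pi_S(y)$ to each $S\in\overline{S}$ falls into $V_S$, its normalized coefficients along the index set $\{S\in\mathfrak{S}:S\in\overline{S}\text{ or }S\text{ is orthogonal to every element of }\overline{S}\}$ lie within $\epsilon$ of the $a_S^\xi$, and its projections to the remaining subsurfaces stay $R$-bounded. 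Here the coefficient that \cite{DHS} attach to a relevant subsurface $S$ is measured, up to a uniform additive error governed by the constants $K,\kappa_0,E$ of Section~\ref{sec:def-hhs}, by the projection distance $d_{\calc S}(\pi_S(x_0),\pi_S(y))$, renormalized so that these quantities sum to $1$ over the relevant index set.

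For the forward implication I would assume $x_n\to\xi$ and test against these neighborhoods. Shrinking the $V_S$ forces $(\pi_S(x_n))_{n\in\NN}$ to converge to $\xi_S$ for each $S\in\overline{S}$, which is exactly condition (1). Letting $\epsilon\to 0$ forces the normalized coefficients of $x_n$ to converge to $a_S^\xi$ along the relevant index set; since ratios of these coefficients are independent of the normalizing sum and, up to bounded error, agree with the ratios $d_{\calc S}(\pi_S(x_0),\pi_S(x_n))/d_{\calc S'}(\pi_{S'}(x_0),\pi_{S'}(x_n))$, this yields condition (2)---with target ratio $0$ exactly when $S$ is orthogonal to $\overline{S}$ but does not belong to it, so that $a_S^\xi=0$.

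For the converse, given any basic neighborhood with data $(\epsilon,R,\{V_S\})$, I would use condition (1) to place $\pi_S(x_n)\in V_S$ for all $S\in\overline{S}$ once $n$ is large, and condition (2) to make the normalized coefficients of $x_n$ along the relevant (finite) index set lie within $\epsilon$ of the $a_S^\xi$, since finitely many ratios converge and the coefficients are recovered from them. The one clause requiring the hierarchical structure is the final one, namely that $\pi_S(x_n)$ stay $R$-bounded for every $S$ that is neither equal to nor orthogonal to the support. Here I would invoke the consistency relation (the constant $\kappa_0$) and the bounded geodesic image axiom (the constant $E$) recalled in Section~\ref{sec:def-hhs}: such an $S$ either has its $\pi_{S_0}$-coordinate staying close to $X_S$ or has $\pi_S$ forced bounded, and combining this with condition (2)---which controls precisely the directions in which unbounded growth is permitted---shows these extra projections cannot escape to infinity along $(x_n)_{n\in\NN}$. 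Eventual membership in every basic neighborhood then gives $x_n\to\xi$.

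The main obstacle I anticipate is matching conventions precisely: the coefficient of an interior point is defined in \cite{DHS} only coarsely and with a normalization, so I must verify that the ratio formulation in condition (2) is genuinely equivalent to DHS-coefficient convergence and that the uniform additive errors wash out in the limit. The second delicate point is establishing that the subsurfaces neither equal nor orthogonal to $\overline{S}$---which are invisible to conditions (1) and (2)---truly have bounded projections along any sequence satisfying (1) and (2), so that omitting them from the criterion loses no information.
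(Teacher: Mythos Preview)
The paper does not actually prove this statement: it is presented as a ``Fact'' with the sole justification that it ``is a consequence of \cite[Definition~2.10]{DHS}.'' In other words, the authors regard conditions~(1) and~(2) as simply a transcription of what the neighborhood basis of Durham--Hagen--Sisto demands of a sequence of interior points, and they leave the unpacking to the reader.

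Your plan to unpack that definition explicitly is therefore the only reasonable approach, and your overall structure (describe the basic neighborhoods, then check both implications) is correct. However, you should double-check your reconstruction of \cite[Definition~2.10]{DHS} against the source: the clause you include about ``its projections to the remaining subsurfaces stay $R$-bounded'' for subsurfaces neither in the support nor orthogonal to it does not appear to be part of that definition for interior points. The DHS neighborhood basis for interior-to-boundary convergence is formulated entirely in terms of the support and the subsurfaces orthogonal to the support, which is exactly what conditions~(1) and~(2) address. If that is right, then your second ``delicate point'' evaporates: there is nothing to verify about the non-support, non-orthogonal directions, because neither side of the equivalence mentions them. Your first delicate point (that DHS's coefficients are only coarsely defined and normalized, so one must check that the ratio formulation in~(2) matches) is genuine but routine, since the additive errors are uniformly bounded while the denominators go to infinity by condition~(1).
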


As the projection maps $\pi_S$ are coarsely Lipschitz, the limit $\xi$ does not change if $(x_n)_{n\in\mathbb{N}}$ is replaced by a sequence $(x'_n)_{n\in\mathbb{N}}$ for which $d_\calx(x_n,x'_n)$ is uniformly bounded.

For every $S\in\mathfrak{S}$, the Gromov boundary $\partial_\infty\calc S$ topologically embeds in $\partial\calx$ by \cite[Proposition~2.13]{DHS}. In particular, we will view $\partial_\infty \calc S_0$ as a subspace of $\partial\calx$. The following fact follows from Fact~\ref{fact:convergence-criterion-interior}.

\begin{fact}\label{fact:convergence-main}
Let $\xi\in\partial_\infty\calc S_0$, and let $(x_n)_{n\in\mathbb{N}}\in\calx^\mathbb{N}$ be a sequence such that $(\pi_{S_0}(x_n))_{n\in\mathbb{N}}$ converges to $\xi$ for the topology on $\calc S_0\cup\partial_\infty\calc S_0$. Then $(x_n)_{n\in\mathbb{N}}$ converges to $\xi$ for the topology on $\calx\cup\partial\calx$. 
\end{fact}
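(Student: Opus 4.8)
The plan is to deduce this immediately from Fact~\ref{fact:convergence-criterion-interior}, by unwinding what that criterion says when the prospective limit point lies in the subspace $\partial_\infty\calc S_0\subseteq\partial\calx$. First I would recall that, under the topological embedding $\partial_\infty\calc S_0\hookrightarrow\partial\calx$ of \cite[Proposition~2.13]{DHS}, the point $\xi$ is precisely the boundary point of $\calx$ whose support is $\overline{S}=\{S_0\}$, with coefficient $a_{S_0}^\xi=1$ and coordinate $\xi_{S_0}=\xi$. The whole argument then reduces to checking conditions~(1) and~(2) of Fact~\ref{fact:convergence-criterion-interior} for this $\xi$ and the given sequence $(x_n)_{n\in\mathbb{N}}$.

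Condition~(1), applied with the unique element $S_0$ of $\overline{S}$, asks exactly that $(\pi_{S_0}(x_n))_{n\in\mathbb{N}}$ converge to $\xi_{S_0}=\xi$ in $\calc S_0\cup\partial_\infty\calc S_0$, which is the standing hypothesis; so there is nothing to prove there.

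For condition~(2), the key observation is that the set of $S\in\mathfrak{S}$ which are either equal to an element of $\overline{S}=\{S_0\}$ or orthogonal to every element of $\overline{S}$ collapses to $\{S_0\}$. Indeed, $S_0$ is the $\sqsubseteq$-maximal domain, so every $S\in\mathfrak{S}$ satisfies $S\sqsubseteq S_0$, and in a hierarchically hyperbolic structure orthogonal domains are never $\sqsubseteq$-comparable (an axiom of \cite[Definition~1.1]{BHS}); hence no $S\in\mathfrak{S}$ is orthogonal to $S_0$. Thus the only pair $(S,S')$ to examine in condition~(2) is $(S_0,S_0)$, for which the displayed ratio equals $1$ for all $n$ large enough: the denominator $d_{\calc S_0}(\pi_{S_0}(x_0),\pi_{S_0}(x_n))$ tends to $+\infty$ because $\pi_{S_0}(x_n)\to\xi\in\partial_\infty\calc S_0$ (using here that $\pi_{S_0}$ has uniformly bounded-diameter image, with the usual abuse of notation for distances between bounded sets). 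Since $a_{S_0}^\xi/a_{S_0}^\xi=1$, condition~(2) holds, and Fact~\ref{fact:convergence-criterion-interior} then yields $(x_n)_{n\in\mathbb{N}}\to\xi$ in $\calx\cup\partial\calx$.

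I do not expect any genuine obstacle: the statement is a direct specialization of Fact~\ref{fact:convergence-criterion-interior}. The only step that deserves an explicit sentence of justification is the claim that no domain of $\mathfrak{S}$ is orthogonal to $S_0$, which is what makes condition~(2) vacuous beyond the trivial diagonal term.
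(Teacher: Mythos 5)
Your proof is correct and is exactly the intended argument: the paper itself only remarks that this Fact ``follows from Fact~\ref{fact:convergence-criterion-interior},'' and your unwinding (support $\{S_0\}$, condition~(1) being the hypothesis, condition~(2) reducing to the trivial diagonal ratio because no domain is orthogonal to the $\sqsubseteq$-maximal element $S_0$) is precisely the verification that remark leaves implicit.
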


We will also need to understand when a sequence $(\xi_n)_{n\in\mathbb{N}}\in (\partial_\infty \calc S_0)^\mathbb{N}$ converges to a point $\xi\in\partial\calx\setminus\partial_\infty\calc S_0$. For that, we need the following definition. Given $S\in\mathfrak{S}$, there is a (coarsely well-defined) notion of a \emph{boundary projection} $\partial\pi_S:\partial_\infty\calc S_0\to\calc S$, as follows. Recall that associated to $S$ is a bounded subset $X_S$ of $\calc S_0$. Now, given $\xi\in\partial_\infty\calc S_0$, take a $(1,20\delta)$-quasigeodesic ray $\gamma_\xi$ in $\calc S_0$ joining $X_S$ to $\xi$ (see e.g.\ \cite[Remark~2.16]{KB} for the existence of such a ray). By the bounded geodesic image axiom, the ray $\gamma_\xi$ has an infinite subray on which the projection $\rho^{S_0}_S$ is coarsely constant, and one defines $\partial\pi_S(\xi)$ as a point in the $\rho^{S_0}_S$-image of this subray. The following convergence criterion directly follows from the definition of the topology on $\calx\cup\partial\calx$, see \cite[Definition~2.8]{DHS}.

\begin{fact}\label{fact:convergence-criterion}
Let $(\xi_n)_{n\in\mathbb{N}}\in (\partial_\infty\calc S_0)^{\mathbb{N}}$, let $\xi=\sum_{S}a_S^\xi \xi_S\in\partial\calx\setminus\partial_\infty\calc S_0$. Then $(\xi_n)_{n\in\mathbb{N}}$ converges to $\xi$ if and only if the following two conditions hold:
\begin{enumerate}
\item for every $S\in\Supp(\xi)$, the sequence $(\partial\pi_S(\xi_n))_{n\in\mathbb{N}}$ converges to $\xi_S$ in $\calc S\cup\partial_\infty\calc S$,
\item for every $S\in\mathfrak{S}$ which is equal to some element of $\Supp(\xi)$, or orthogonal to every element of $\Supp(\xi)$, for every $S'\in\Supp(\xi)$, one has 
$$\frac{d_{\calc S}(\pi_S(x_0),\partial\pi_S(\xi_n))}{d_{\calc S'}(\pi_{S'}(x_0),\partial\pi_{S'}(\xi_n))}\to\frac{a_S^\xi}{a_{S'}^\xi}$$ as $n$ goes to $+\infty$. 
\end{enumerate}  
\end{fact}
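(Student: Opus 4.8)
The plan is to read the statement off the definition of the topology on $\calx\cup\partial\calx$ in \cite[Definition~2.8]{DHS}, in the same spirit in which Fact~\ref{fact:convergence-criterion-interior} is read off from \cite[Definition~2.10]{DHS}, but now specializing to a sequence all of whose terms already lie in the embedded subspace $\partial_\infty\calc S_0\subseteq\partial\calx$ of \cite[Proposition~2.13]{DHS}. The crucial simplification is that such a $\xi_n$ is represented by the ``atomic'' boundary point $1\cdot\xi_n$ whose support is the single element $S_0$; hence in the system of conditions of \cite[Definition~2.8]{DHS} every clause comparing the behaviour of the approximating sequence across two distinct elements of \emph{its own} support is vacuous, and only the requirements describing convergence towards the limit point $\xi$ of support $\overline{S}$ survive. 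This also explains why the hypothesis excludes $\xi\in\partial_\infty\calc S_0$: when $\overline{S}=\{S_0\}$ the assertion is just that $\xi_n\to\xi$ inside $\partial_\infty\calc S_0$, which is immediate since $\partial_\infty\calc S_0$ carries the subspace topology.

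Concretely, I would first record the coordinate data that the DHS topology reads at $\xi$, namely the points $\xi_S\in\partial_\infty\calc S$ together with the coefficients $a_S^\xi$ for $S\in\overline{S}$, and then identify the correct ``shadow'' of the boundary point $\xi_n$ in a coordinate $\calc S$ with $S\in\overline{S}$ as the boundary projection $\partial\pi_S(\xi_n)$: by the bounded geodesic image axiom, along a $(1,20\delta)$-quasigeodesic ray in $\calc S_0$ from $X_S$ to $\xi_n$ the map $\rho^{S_0}_S$ is coarsely constant on an infinite subray with value $\partial\pi_S(\xi_n)$ up to an error depending only on the HHS constants, so $\partial\pi_S$ may replace the coarser projections used in \cite{DHS} since the topology is insensitive to bounded perturbations. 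With these identifications, the clause of \cite[Definition~2.8]{DHS} asserting coordinatewise convergence towards $\xi$ becomes exactly condition~(1), while the clause comparing the rates of growth in the various coordinate directions — the growth in coordinate $S$ being measured by $d_{\calc S}(\pi_S(x_0),\partial\pi_S(\xi_n))$ relative to a fixed basepoint projection $\pi_S(x_0)$ — becomes condition~(2). The index set over which this comparison is imposed is precisely the set of $S$ \emph{compatible with} $\overline{S}$, i.e.\ equal to some element of $\overline{S}$ or orthogonal to all of them; a subsurface $S$ transverse or properly nested with some element of $\overline{S}$ contributes nothing, because by the consistency inequality (constant $\kappa_0$) together with the bounded geodesic image axiom the quantity $d_{\calc S}(\pi_S(x_0),\partial\pi_S(\xi_n))$ stays uniformly bounded along any sequence satisfying~(1), so such an $S$ automatically receives ``coefficient $0$''.

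Finally, one checks that condition~(2) is well posed and basepoint independent: replacing $x_0$ by another point of $\calx$ changes each $d_{\calc S}(\pi_S(x_0),\partial\pi_S(\xi_n))$ by a bounded additive amount, and since condition~(1) forces every denominator $d_{\calc S'}(\pi_{S'}(x_0),\partial\pi_{S'}(\xi_n))$ to tend to $+\infty$, the limiting ratios are unaffected (the same remark absorbs the bounded indeterminacy of all projection maps). I expect the only real obstacle to be the bookkeeping in the last step: \cite[Definition~2.8]{DHS} is designed to govern limits of \emph{mixed} sequences in $\calx\cup\partial\calx$ that may ``jump'' between coordinate directions, and one must carefully track which of its clauses degenerate and which survive under the hypothesis that the sequence lies in $\partial_\infty\calc S_0$, so as to see that conditions~(1)--(2) capture the full topology and nothing is missed. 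Beyond the uses of the consistency and bounded geodesic image axioms noted above, no non-formal input is required; the remainder is a transcription of the definition.
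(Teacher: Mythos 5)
Your proposal is correct and follows the same route as the paper, which offers no argument beyond asserting that the criterion ``directly follows from the definition of the topology on $\calx\cup\partial\calx$'' in \cite[Definition~2.8]{DHS}; your write-up is simply a careful unpacking of that citation, identifying $\partial\pi_S(\xi_n)$ as the coordinate shadow of the atomic boundary point $\xi_n$ and matching conditions~(1)--(2) to the corresponding clauses of the definition. Nothing in your elaboration conflicts with the paper's (parallel) treatment of Fact~\ref{fact:convergence-criterion-interior}.
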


 We will also need the following topological fact.

\begin{lemma}\label{lemma:boundary-hhs-topology}
Let $(\calx,\mathfrak{S})$ be a hierarchically hyperbolic space, with $\calx$ proper and $\mathfrak{S}$ countable. Then $\calx\cup\partial\calx$ is compact and metrizable.

\end{lemma}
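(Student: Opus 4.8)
The plan is to assemble the statement from three ingredients, all of which are either quoted from \cite{DHS} or follow easily from the standing hypotheses: second-countability of each Gromov boundary $\partial_\infty\calc S$, second-countability of $\calx\cup\partial\calx$, and compactness of $\calx\cup\partial\calx$. The final step is the general-topology fact that a compact second-countable space is metrizable (Urysohn).

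First I would establish that for every $S\in\mathfrak{S}$ the hyperbolic space $\calc S$ is separable, hence $\partial_\infty\calc S$ is second-countable. Since $\calx$ is proper it is separable; the partial realization axiom gives, for every $S$, that $\pi_S(\calx)$ is coarsely dense in $\calc S$, so a countable dense subset of $\calx$ projects to a countable coarsely dense subset $D\subseteq\calc S$. A coarsely dense subset of a geodesic (more precisely, $\delta$-hyperbolic quasigeodesic) space is enough to build a countable basis of neighborhoods for the topology on $\calc S\cup\partial_\infty\calc S$ via the standard Gromov-product neighborhoods based at points of $D$; in particular $\partial_\infty\calc S$ is second-countable.

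Next I would invoke the description of the topology on $\calx\cup\partial\calx$ from \cite[Definitions~2.8,~2.9,~2.10]{DHS}: a neighborhood basis at a boundary point $p=\sum_S a_S^p p_S$ is built from the data of a finite tuple (those are countably many once $\mathfrak{S}$ is countable), open neighborhoods of the $p_S$ inside the relevant $\partial_\infty\calc S$, and rational thresholds on the ratios of distances. Replacing the neighborhoods of the $p_S$ by members of the countable bases produced in the previous step, and letting the numerical parameters range over $\mathbb{Q}$, yields a countable family of sets whose interiors form a basis for the topology — here one uses that the interiors are nonempty, which is exactly the content of the proof of \cite[Lemma~2.16]{DHS}. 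Hence $\calx\cup\partial\calx$ is second-countable. Finally, $\calx\cup\partial\calx$ is compact by \cite[Theorem~3.4]{DHS}, and a compact Hausdorff second-countable space is metrizable, which finishes the proof.

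The main obstacle is bookkeeping rather than mathematics: one has to check that passing to a countable basis at each stage (for each $\calc S$, then for $\calx\cup\partial\calx$) genuinely produces a basis of the product-and-ratio topology defined in \cite{DHS}, and that the operation "pass to the interior" does not destroy the basis property — this is where the nonemptiness of interiors of the standard neighborhoods, established in \cite[Lemma~2.16]{DHS}, is used. Everything else is a direct citation of \cite{DHS} together with the Urysohn metrization theorem, so I would keep the written proof short and point to those references, as is done in the excerpt.
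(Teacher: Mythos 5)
Your proposal is correct and follows essentially the same route as the paper: second-countability of each $\partial_\infty\calc S$ via separability of $\calx$ and the partial realization axiom, second-countability of $\calx\cup\partial\calx$ by passing to countable bases in the DHS neighborhood definitions (using countability of $\mathfrak{S}$ and the nonemptiness of interiors from the proof of \cite[Lemma~2.16]{DHS}), compactness from \cite[Theorem~3.4]{DHS}, and Urysohn metrization. The extra bookkeeping you flag (rational thresholds on the ratios, checking that taking interiors preserves the basis property) is a reasonable elaboration of what the paper leaves implicit.
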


\begin{proof}
The compactness of $\calx\cup\partial\calx$ was proved in \cite[Theorem~3.4]{DHS}. The metrizability of $\calx\cup\partial\calx$ was proven by Hagen in \cite[Proposition~2.1]{Hag}.
\end{proof}

\subsection{Dynamics on the boundary and proper proximality}

In order to prove Theorem~\ref{theo:hhs}, we will apply Lemma~\ref{lemma:north-south} with $K=\partial\calx$ and $K^*=\partial_\infty\calc S_0$, which contains a Cantor set because the $G$-action on $\calc S_0$ is assumed to be nonelementary. The key point is to prove the following statement, which can be viewed as a version for hierarchically hyperbolic spaces of the theorem of Papasoglu and Swenson we used in Section~\ref{sec:cat}.

\begin{prop}\label{prop:dynamics-boundary-hhs}
Let $(\calx,\mathfrak{S})$ be a hierarchically hyperbolic space, with $\calx$ proper. 
 Let $G$ be a countable group acting nonelementarily by HHS automorphisms on $(\calx,\mathfrak{S})$ with an almost equivariant principal projection, so that the $G$-action on $\calx$ is by uniform quasi-isometries. Let $x\in\calx$, and let $(g_n)_{n\in\mathbb{N}}\in G^{\mathbb{N}}$ be a sequence that escapes every compact subspace of $\calx$. 

Then there exist $\xi^-,\xi^+\in\partial\calx$ and a subsequence $(g_{\sigma(n)})_{n\in\mathbb{N}}$ such that for every $\xi\in\partial_\infty \calc S_0\setminus\{\xi^-\}$, one has $\lim\limits_{n\to +\infty} g_{\sigma(n)}\xi=\xi^+$.
\end{prop}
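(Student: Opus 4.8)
Since $\calx\cup\partial\calx$ is compact and metrizable (Lemma~\ref{lemma:boundary-hhs-topology}), I would first pass to a subsequence (not relabelled) so that $(g_nx)_{n\in\mathbb N}$ converges to some $\xi^+\in\partial\calx$ and $(g_n^{-1}x)_{n\in\mathbb N}$ converges to some $\xi^-\in\partial\calx$. Write $p_0=\pi_{S_0}(x)\in\calc S_0$; since every $g_n$ acts on $\calc S_0$ by an isometry and $\pi_{S_0}$ is almost equivariant, the sequence $(\pi_{S_0}(g_nx))_n$ stays within bounded distance of $(g_np_0)_n$, and similarly on the $g_n^{-1}$ side, while $d_{\calc S_0}(p_0,g_np_0)=d_{\calc S_0}(p_0,g_n^{-1}p_0)$; so these two sequences are simultaneously bounded or unbounded in $\calc S_0$, and the proof splits accordingly. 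The recurring difficulty is that $\calc S_0$ need not be proper, so convergence in $\calc S_0\cup\partial_\infty\calc S_0$ has to be extracted from convergence in the compact space $\calx\cup\partial\calx$. For this one uses the following transfer observation, which follows from the definition of the HHS boundary \cite{DHS} together with the fact that the $\sqsubseteq$-maximal domain $S_0$ is orthogonal to no other domain: if $(y_n)\in\calx^{\mathbb N}$ converges to $\eta\in\partial\calx$ and $(\pi_{S_0}(y_n))_n$ is unbounded, then $\mathrm{supp}(\eta)=\{S_0\}$, hence $\eta\in\partial_\infty\calc S_0$, and $(\pi_{S_0}(y_n))_n$ converges to $\eta$ in $\calc S_0\cup\partial_\infty\calc S_0$ (by Fact~\ref{fact:convergence-criterion-interior}(1)); equivalently, if $(\pi_{S_0}(y_n))_n$ is bounded then $S_0\notin\mathrm{supp}(\eta)$.

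\emph{Case 1: $(\pi_{S_0}(g_nx))_n$ is unbounded.} Applying the transfer observation to $(g_nx)$ and to $(g_n^{-1}x)$ gives $\xi^+,\xi^-\in\partial_\infty\calc S_0$, and, via almost equivariance, $g_np_0\to\xi^+$ and $g_n^{-1}p_0\to\xi^-$ in $\calc S_0\cup\partial_\infty\calc S_0$. One then proves a pointwise north--south statement in the $\delta$-hyperbolic space $\calc S_0$: for every $\xi\in\partial_\infty\calc S_0$ with $(\xi\mid\xi^-)_{p_0}<+\infty$, i.e. $\xi\neq\xi^-$, one has $g_n\xi\to\xi^+$ in $\calc S_0\cup\partial_\infty\calc S_0$. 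This is the routine Gromov-product computation: from the identity $(g_n\xi\mid g_np_0)_{p_0}=d_{\calc S_0}(p_0,g_np_0)-(\xi\mid g_n^{-1}p_0)_{p_0}$, hyperbolicity and $(g_n^{-1}p_0\mid\xi^-)_{p_0}\to+\infty$ force $(\xi\mid g_n^{-1}p_0)_{p_0}$ to stay bounded, so $(g_n\xi\mid g_np_0)_{p_0}\to+\infty$, hence $(g_n\xi\mid\xi^+)_{p_0}\to+\infty$. Since $\partial_\infty\calc S_0$ embeds topologically in $\partial\calx$ (\cite[Proposition~2.13]{DHS}; see also Fact~\ref{fact:convergence-main}) and $\xi^+\in\partial_\infty\calc S_0$, this convergence upgrades to convergence in $\partial\calx$, proving the proposition in this case with exceptional point $\xi^-$.

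\emph{Case 2: $(\pi_{S_0}(g_nx))_n$ is bounded.} By the transfer observation $S_0\notin\mathrm{supp}(\xi^+)$ and $S_0\notin\mathrm{supp}(\xi^-)$, so $\xi^+,\xi^-\notin\partial_\infty\calc S_0$; taking this $\xi^-$ as exceptional point one has $\partial_\infty\calc S_0\setminus\{\xi^-\}=\partial_\infty\calc S_0$, and one must show $g_n\xi\to\xi^+$ for \emph{every} $\xi\in\partial_\infty\calc S_0$. Here I would verify the convergence criterion of Fact~\ref{fact:convergence-criterion} for the sequence $(g_n\xi)_n$ in $\partial_\infty\calc S_0$ and the limit point $\xi^+$, with support $\overline S$. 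Since $g_nx\to\xi^+$, Fact~\ref{fact:convergence-criterion-interior} already records that, for $S\in\overline S$ and for $S$ orthogonal to all of $\overline S$, the sequences $(\pi_S(g_nx))_n$ have the right limiting behaviour (convergence to $\xi^+_S$ and prescribed ratios), so it suffices to compare $\partial\pi_S(g_n\xi)$ with $\pi_S(g_nx)$, i.e. to bound $d_{\calc S}(\pi_S(g_nx),\partial\pi_S(g_n\xi))$ uniformly in $n$ for these $S$. Using that $g_n$ is an HHS automorphism fixing $S_0$ (so that $\pi_S(g_nx)=g_n\pi_{g_n^{-1}S}(x)$ and $\partial\pi_S(g_n\xi)=g_n\,\partial\pi_{g_n^{-1}S}(\xi)$, up to uniformly bounded error), this reduces to bounding $d_{\calc T}(\pi_T(x),\partial\pi_T(\xi))$ over the moving domains $T=g_n^{-1}S$, which one estimates by combining the consistency relation relating $\pi_T(x)$ to $\rho^{S_0}_T(\pi_{S_0}(x))$ with the bounded geodesic image axiom applied to the quasigeodesic ray from $X_T$ to $\xi$ that defines $\partial\pi_T(\xi)$, together with partial realization. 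Granting this comparison, Fact~\ref{fact:convergence-criterion} applies and $g_n\xi\to\xi^+$ in $\partial\calx$.

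\emph{Main obstacle.} Case~1 is essentially formal once the transfer observation is available. The heart of the proof is Case~2: making the comparison of $\partial\pi_S(g_n\xi)$ with $\pi_S(g_nx)$ uniform as the relevant domains $g_n^{-1}S$ vary with $n$, and checking that the resulting sequences satisfy the rather rigid ratio conditions of Fact~\ref{fact:convergence-criterion}. This is precisely where the hypotheses that the $G$-action is by uniform quasi-isometries with an almost equivariant principal projection, together with the bounded geodesic image axiom, the consistency relation and partial realization, all enter; the non-properness of $\calc S_0$, handled throughout by transferring convergence from the compact space $\calx\cup\partial\calx$, is a secondary but pervasive nuisance.
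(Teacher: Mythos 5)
Your skeleton matches the paper's: extract $\xi^{\pm}$ by compactness of $\calx\cup\partial\calx$, run a Gromov-product north--south argument in $\calc S_0$ when $\xi^+\in\partial_\infty\calc S_0$, and otherwise verify the criterion of Fact~\ref{fact:convergence-criterion} by pinning $\partial\pi_S(g_n\xi)$ to the projections of an interior orbit. Two steps do not hold up, however. The first is the half of your ``transfer observation'' asserting that if $(\pi_{S_0}(y_n))_n$ is unbounded then $S_0$ lies in the support of the limit $\eta$. The orthogonality fact you invoke only gives the easy converse direction ($S_0$ in the support forces the support to equal $\{S_0\}$); when $S_0$ is \emph{not} in the support, it is neither a support domain nor orthogonal to all of them, so Fact~\ref{fact:convergence-criterion-interior} places no constraint at all on $(\pi_{S_0}(y_n))_n$, and nothing rules out a sequence converging to a point supported on proper domains whose principal projections diverge slowly. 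Consequently your dichotomy (bounded versus unbounded principal projection) does not align with the one your arguments actually require ($\xi^+\in\partial_\infty\calc S_0$ or not), and the configuration ``$(\pi_{S_0}(g_nx))_n$ unbounded but $\xi^+\notin\partial_\infty\calc S_0$'' is covered by neither case. The paper splits directly on whether $\xi^+\in\partial_\infty\calc S_0$ and never needs to locate $\xi^-$: the only input about $\xi^-$, used in both cases, is that for $\xi\in\partial_\infty\calc S_0\setminus\{\xi^-\}$ the sequence $(g_n^{-1}\pi_{S_0}(x))_n$ stays away from $\xi$ (a uniform Gromov product bound), which follows from $g_n^{-1}x\to\xi^-$ via Fact~\ref{fact:convergence-main} and almost equivariance.

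The second and more serious gap is exactly at what you call the heart of the proof. You propose to bound $d_{\calc T}(\pi_T(x),\partial\pi_T(\xi))$ for $T=g_n^{-1}S$ by combining consistency (to replace $\pi_T(x)$ by $\rho_T^{S_0}(\pi_{S_0}(x))$) with bounded geodesic image along a ray towards $\xi$. But consistency yields $\diam_{\calc T}(\pi_T(x)\cup\rho_T^{S_0}(\pi_{S_0}(x)))\le\kappa_0$ only when $\pi_{S_0}(x)$ is far from $X_T$, and in your Case~2 the sets $X_{g_n^{-1}S}$ sit at uniformly bounded distance from $\pi_{S_0}(x)$: indeed $d_{\calc S_0}(\pi_{S_0}(x),X_{g_n^{-1}S})$ is coarsely $d_{\calc S_0}(\pi_{S_0}(g_nx),X_S)$, which is bounded. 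For the same reason a quasigeodesic from $\pi_{S_0}(x)$ towards $\xi$ cannot avoid $X_{g_n^{-1}S}$, so bounded geodesic image does not identify $\rho_{T}^{S_0}(\pi_{S_0}(x))$ with $\partial\pi_T(\xi)$. The missing device is the paper's auxiliary point $\tilde y$: using the uniform Gromov product bound above, one chooses $\tilde y$ with $\pi_{S_0}(\tilde y)$ far enough along the quasigeodesic from $p$ towards $\xi$ that, after applying $g_n$, every quasigeodesic from $\pi_{S_0}(g_n\tilde y)$ to $g_n\xi$ avoids a definite neighborhood of all the relevant $X_S$; bounded geodesic image and consistency then pin $\partial\pi_S(g_n\xi)$ to $\pi_S(g_n\tilde y)$, which (the $\pi_S$ being uniformly coarsely Lipschitz and the action by uniform quasi-isometries) is uniformly close to $\pi_S(g_nx)$. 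So the comparison you are after is true, but it cannot be reached from the fixed basepoint $x$; the detour through a $\xi$-dependent point far out along the ray is essential.
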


For the proof, we recall that given three points $x,y,z$ in a hyperbolic space $X$, the \emph{Gromov product} $\langle x,y\rangle_z$ is defined as \[\langle x,y\rangle_z=\frac{1}{2}\left(d(z,x)+d(z,y)-d(x,y)\right).\] When $X$ is geodesic, it measures the distance of $z$ to the geodesic segment $[x,y]$, up to a bounded error that only depends on the hyperbolicity constant of $X$. Given $\xi\in\partial_\infty X$, one also defines \[\langle x,\xi\rangle_z=\sup (\liminf \langle x,y_n\rangle_z),\] where the supremum is taken over all sequences $(y_n)_{n\in\mathbb{N}}\in X^{\mathbb{N}}$ that converge to $\xi$. 

\begin{proof}
Let $x\in\calx$. As $\calx$ is proper and $(g_n)_{n\in\mathbb{N}}$ escapes every compact subspace of $\calx$, the distance $d(x,g_nx)$ diverges to $+\infty$. As the $G$-action on $\calx$ is by uniform quasi-isometries, it follows that $d(x,g_n^{-1}x)$ also diverges to $+\infty$. This implies that $(g_n^{-1}x)_{n\in\mathbb{N}}$ leaves every compact subset of $\calx$. Since $\calx\cup\partial\calx$ compactifies $\calx$, up to passing to a subsequence, we can therefore assume that $(g_n x)_{n\in\mathbb{N}}$ converges to a point $\xi^+\in\partial\calx$, and $(g_n^{-1} x)_{n\in\mathbb{N}}$ converges to a point $\xi^-\in\partial\calx$. As follows from Fact~\ref{fact:convergence-criterion-interior} and the fact that the $G$-action on $\calx$ is by uniform quasi-isometries, these limits are unchanged if $x$ is replaced by any other point $y\in\calx$. 

We claim that for every $p\in\calc S_0$ and every $\xi\in\partial_\infty\calc S_0\setminus\{\xi^-\}$, the sequence $(g_n^{-1}p)_{n\in\mathbb{N}}$ does not converge to $\xi$ in $\calc S_0\cup\partial_\infty\calc S_0$. Indeed, as the $G$-action on $\calc S_0$ is by isometries, it is enough to prove the claim for some point $p\in\calc S_0$; we choose $p=\pi_{S_0}(x)$. As $(g_n^{-1} x)_{n\in\mathbb{N}}$ does not converge to $\xi$ in $\calx\cup\partial\calx$, it follows from Fact~\ref{fact:convergence-main} that $(\pi_{S_0}(g_n^{-1}x))_{n\in\mathbb{N}}$ does not converge to $\xi$ in $\calc S_0\cup\partial_\infty\calc S_0$. As the $G$-action on $\calx$ has an almost equivariant principal projection (i.e.\ $\pi_{S_0}$ is equivariant up to a bounded error), we deduce that $(g_n^{-1}\pi_{S_0}(x))_{n\in\mathbb{N}}$ does not converge to $\xi$ in $\calc S_0\cup\partial_\infty\calc S_0$. This proves our claim.  

Let $\xi\in\partial_\infty\calc S_0\setminus\{\xi^-\}$. Since $(g_n^{-1}p)_{n\in\mathbb{N}}$ does not converge to $\xi$ in $\calc S_0\cup\partial_\infty\calc S_0$, using hyperbolicity of $\calc S_0$, there exists a constant $C_1\ge 0$ and an open neighborhood $U$ of $\xi$ in $\calc S_0\cup\partial_\infty\calc S_0$ such that for every $n\in\mathbb{N}$ and every $z\in U$ lying on a $(1,20\delta)$-quasigeodesic from $p$ to $\xi$, the  Gromov product $\langle g_n^{-1}p,\xi\rangle_z$ is at most $C_1$. As the $G$-action on $\calc S_0$ is by isometries, this implies that $\langle p,g_n\xi\rangle_{g_n z}\le C_1$. 

We aim to show that $(g_n\xi)_{n\in\mathbb{N}}$ converges to $\xi^+$, and for that there are now two cases to consider. 
\\
\\
\textbf{Case 1:} We have $\xi^+\in\partial_\infty\calc S_0$.
\\

In this case, we first claim that for every $p\in\calc S_0$, the sequence $(g_np)_{n\in\mathbb{N}}$ converges to $\xi^+$ in $\calc S_0\cup\partial_\infty\calc S_0$. As the $G$-action on $\calc S_0$ is by isometries, it is enough to prove this claim when $p=\pi_{S_0}(x)$. As $(g_nx)_{n\in\mathbb{N}}$ converges to $\xi^+$ in $\calx\cup\partial\calx$, it follows from Fact~\ref{fact:convergence-criterion-interior} that $(\pi_{S_0}(g_nx))_{n\in\mathbb{N}}$ converges to $\xi^+$ in $\calc S_0\cup\partial_\infty\calc S_0$. As the $G$-action on $\calx$ has an almost equivariant principal projection, the claim follows.   

Choosing a point $z\in U$ as in the paragraph preceding Case~1, the sequence $(g_nz)_{n\in\mathbb{N}}$ converges to $\xi^+$ in $\calc S_0\cup\partial_\infty\calc S_0$. The inequality $\langle p,g_n\xi\rangle_{g_n z}\le C_1$ then implies that $(g_n\xi)_{n\in\mathbb{N}}$ converges to $\xi^+$ in $\partial_\infty\calc S_0$. As the inclusion map $\partial_\infty\calc S_0\to\partial\calx$ is a topological embedding, this convergence also holds in $\partial\calx$, as desired.
\\
\\
\textbf{Case 2:} We have $\xi^+\notin\partial_\infty\calc S_0$.
\\

By definition of $\partial\calx$, the point $\xi^+$ is written as a finite sum $\sum_S a_S\xi^+_S$ over a collection of pairwise orthogonal $S\in\mathfrak{S}$, where each $\xi^+_S$ belongs to $\partial_\infty\calc S$, each $a_S$ is a positive real number, with $\sum_S a_S=1$. 

The fact that $(g_n\xi)_{n\in\mathbb{N}}$ converges to $\xi^+$ follows from the convergence criteria recalled in Fact~\ref{fact:convergence-criterion-interior} and Fact~\ref{fact:convergence-criterion} together with the following claim: there exists $\tilde y\in\calx$ such that for every $S\in\mathfrak{S}$ which either belongs to $\Supp(\xi^+)$ or is orthogonal to every element of $\Supp(\xi^+)$, the projections $\partial\pi_{S}(g_n\xi)$ lie at bounded distance from $\pi_S(g_n\tilde{y})$. 

We now prove the above claim. We start by recalling a few notations. First, recall that the $G$-action on $\calx$ has an almost equivariant principal projection: this means in particular that there exists a constant $C\ge 0$ such that for every $x\in\calx$ and every $n\in\mathbb{N}$, one has $\diam_{\calc S_0}(\pi_{S_0}(g_nx)\cup g_n\pi_{S_0}(x))\le C$. Also, let $\alpha\ge 0$ be a constant such that for every $p\in\calc S_0$, there exists $\tilde p\in\calx$ with $d_{\calc S_0}(\pi_{S_0}(\tilde p),p)\le\alpha$. Let $K$ be the maximal diameter in $\calc S_0$ of a set of the form $\pi_{S_0}(\tilde{x})$ with $\tilde{x}\in\calx$. Recall that $X_S$ denotes the bounded set associated to $S$ in $\calc S_0$. Choose a point $p\in X_{S}$. By \cite[Lemma~1.5]{DHS}, the diameter of the union of all subspaces $X_S$ with $S$ either in $\Supp(\xi^+)$ or orthogonal to every element of $\Supp(\xi^+)$ is finite; we denote it by $L$. Let $E\ge 0$ be the constant coming from the bounded geodesic image axiom, and let $\kappa_0\ge 0$ be the constant coming from the axiom of consistency of projections as recalled in the previous section. 

Let $C_1$ be as in the paragraph preceding Case~1. Let $\beta\ge 0$ be such that for every $x,y\in\calc S_0$ and every $\eta\in\partial_\infty\calc S_0$, if $\langle x,\eta\rangle_{y}\le C_1+\alpha+C$ and $d_{\calc S_0}(x,y)\ge \beta$, then every $(1,20\delta)$-quasigeodesic from $y$ to $\eta$ lies outside the $(E+L+\kappa_0+1)$-neighborhood of $x$. Let $C_2:=\alpha+K+\beta+C$.

As $(g_n^{-1}p)_{n\in\mathbb{N}}$ does not converge to $\xi$ in $\calc S_0\cup\partial_\infty\calc S_0$, we can find $y\in\calc S_0$ such that for every $n\in\mathbb{N}$, one has $\langle g_n^{-1}p,\xi\rangle_y\le C_1$ and $d_{\calc S_0}(g_n^{-1}p,y)\ge C_2$ (such a point $y$ can be found in a quasigeodesic from $p$ to $\xi$, sufficiently far from $p$, see the discussion before Case 1). Let $\tilde y\in\calx$ be such that $d_{\calc S_0}(y,\pi_{S_0}(\tilde y))\le\alpha$. In particular, we get that $d_{\calc S_0}(g_n^{-1}p,\pi_{S_0}(\tilde y))\ge C_2-(\alpha+K)$.

Applying the isometry $g_n$, we deduce that for every $n\in\mathbb{N}$, one has $\langle p,g_n\xi\rangle_{g_n y}\le C_1$ and $d_{\calc S_0}(p,\pi_{S_0}(g_n\tilde{y}))\ge C_2-\alpha-K-C=\beta$. Therefore, for every $y'\in\pi_{S_0}(g_n\tilde{y})$ and every $n\in\mathbb{N}$, we have $d(g_ny,y')\le\alpha+C$ whence $\langle p,g_n\xi\rangle_{y'}\le C_1+\alpha+C$ and $d_{\calc S_0}(p,y')\ge\beta$, so every $(1,20\delta)$-quasigeodesic ray from $y'$ to $g_n\xi$ lies outside the $(E+L+\kappa_0+1)$-neighborhood of $p$. For every $S\in\mathfrak{S}$ which either belongs to $\Supp(\xi^+)$ or is orthogonal to every element in $\Supp(\xi^+)$, such a quasigeodesic ray thus lies outside the $(E+\kappa_0+1)$-neighborhood of $X_S$. 

Let $S\in\mathfrak{S}$ which either belongs to $\Supp(\xi^+)$ or is orthogonal to every element in $\Supp(\xi^+)$. Using the bounded geodesic image property, we deduce that the diameter $$\diam_{\calc S}(\{\partial\pi_S(g_n\xi)\}\cup  \rho^{S_0}_S(\pi_{S_0}(g_n\tilde{y})))$$ is uniformly bounded. Since $\pi_{S_0}(g_n\tilde{y})$ lies outside the $(\kappa_0+1)$-neighborhood of $X_S$, it follows from the consistency axiom that $\diam_{\calc S}(\pi_S(g_n\tilde{y})\cup\rho^{S_0}_S(\pi_{S_0}(g_n\tilde{y})))\le\kappa_0$.  This proves our claim, and concludes the proof. 
\end{proof}

We are now in position to complete our proof of the main theorem of this section.

\begin{proof}[Proof of Theorem~\ref{theo:hhs}]
We will use the dynamical criterion provided by Lemma~\ref{lemma:north-south}. Let $K:=\calx\cup\partial\calx$, which is compact and metrizable (Lemma~\ref{lemma:boundary-hhs-topology}). Let $\calc$ be the principal hyperbolic space in the hierarchically hyperbolic structure of $\calx$, and let $K^*:=\partial_\infty\calc$: this contains a topologically embedded Cantor set because the $G$-action on $\calc$ is assumed to be nonelementary, so $\partial_\infty\calc$ contains the limit set of any Schottky subgroup of $G$. The nonelementarity of the action also ensures that $K$ does not carry any $G$-invariant probability measure. Finally, the main dynamical assumption required by Lemma~\ref{lemma:north-south} was checked in Proposition~\ref{prop:dynamics-boundary-hhs}. This completes our proof. 
\end{proof}

\subsection{Applications to mapping class groups and their subgroups}

A \emph{surface of finite type} is a topological space $\Sigma$ obtained from a (possibly disconnected) boundaryless compact surface by possibly removing finitely many points and finitely many open disks. Its \emph{extended mapping class group} $\Mod^*(\Sigma)$ is the group of all homeomorphisms of $\Sigma$ that fix its boundary pointwise, up to homotopies that are the identity on every boundary component. As a consequence of Theorem~\ref{theo:hhs}, we deduce the following statement, thus answering a question raised by Boutonnet, Ioana and Peterson in \cite{BIP}.

\begin{cor}
Let $\Sigma=\Sigma_{g,n}$ be a connected orientable surface obtained from a closed surface of genus $g$ by removing $n$ points, with $3g+n-3>0$. Then $\Mod^*(\Sigma)$ is properly proximal.
 
More generally, every subgroup of $\Mod^*(\Sigma)$ acting nonelementarily on the curve graph of $\Sigma$ is properly proximal.
\end{cor}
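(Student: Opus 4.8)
The plan is to deduce this corollary directly from Theorem~\ref{theo:hhs}, applied to the action of $\Mod^*(\Sigma)$ — or of the relevant subgroup — on the marking graph $\calx$ of $\Sigma$, equipped with the hierarchically hyperbolic structure $\mathfrak{S}$ whose index set is the collection of isotopy classes of essential (including annular) subsurfaces of $\Sigma$. That $(\calx,\mathfrak{S})$ is a hierarchically hyperbolic space is \cite[Theorem~11.1]{BHS}, and its principal hyperbolic space $\calc S_0$ is the curve graph $\calc(\Sigma)$. First I would record that $\mathfrak{S}$ is countable (isotopy classes of subsurfaces of a finite-type surface form a countable set) and that $\calx$ is proper: the marking graph is connected and locally finite, since a complete clean marking admits only finitely many elementary moves \cite{MM}. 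In the two sporadic low-complexity cases $\Sigma_{1,1}$ and $\Sigma_{0,4}$ — where markings and subsurface projections degenerate — I would instead take $\calx$ to be a Cayley graph of the group $\Mod^*(\Sigma)$, which is virtually free, hence Gromov hyperbolic and thus a hierarchically hyperbolic space with trivial index set $\mathfrak{S}=\{S_0\}$; here $\calx$ is quasi-isometric to $\calc(\Sigma)$, so the two hyperbolic spaces coincide coarsely.

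Next I would verify the remaining hypotheses of Theorem~\ref{theo:hhs} for the $\Mod^*(\Sigma)$-action on $(\calx,\mathfrak{S})$. The action is by HHS automorphisms — it permutes essential subsurfaces and intertwines the subsurface projections — and in fact by isometries, hence a fortiori by uniform quasi-isometries; it is proper, since $\Mod^*(\Sigma)$ acts properly (indeed properly cocompactly, with finite vertex stabilizers) on its marking graph; and it has an almost equivariant principal projection, because sending a complete clean marking to the pants decomposition formed by its base curves commutes with the $\Mod^*(\Sigma)$-action up to the bounded ambiguity built into the definition of $\pi_{S_0}$. Finally, I would check nonelementarity of the action on $\calc S_0 = \calc(\Sigma)$: by Masur--Minsky every pseudo-Anosov mapping class acts loxodromically on the curve graph \cite{MM1}, and since $3g+n-3>0$ the surface $\Sigma$ admits pseudo-Anosov homeomorphisms, among which a ping-pong argument produces two with disjoint fixed points in $\partial_\infty\calc(\Sigma)$ (e.g.\ inside a Schottky subgroup). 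With all hypotheses in place, Theorem~\ref{theo:hhs} gives that $\Mod^*(\Sigma)$ is properly proximal.

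For the second (more general) assertion, let $H\subseteq\Mod^*(\Sigma)$ be a subgroup acting nonelementarily on $\calc(\Sigma)$, i.e.\ containing two independent loxodromic isometries of $\calc S_0$; this is precisely the nonelementarity hypothesis of Theorem~\ref{theo:hhs} for the restricted action. The restriction to $H$ of the $\Mod^*(\Sigma)$-action on $\calx$ is still by HHS automorphisms, still by isometries, still has an almost equivariant principal projection, and is still proper, since properness of a group action passes to subgroups. Theorem~\ref{theo:hhs} therefore applies to the $H$-action on $(\calx,\mathfrak{S})$ and yields that $H$ is properly proximal; the first assertion is then the special case $H=\Mod^*(\Sigma)$, using the nonelementarity established above. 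The only point requiring genuine care — rather than bookkeeping — is the treatment of the sporadic surfaces $\Sigma_{1,1}$ and $\Sigma_{0,4}$, where one must bypass the marking-graph HHS structure; everything else amounts to matching the standard properties of the mapping class group action on the marking and curve graphs with the hypotheses of Theorem~\ref{theo:hhs}.
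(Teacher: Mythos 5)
Your proposal is the paper's proof: both deduce the corollary by citing the Behrstock--Hagen--Sisto hierarchically hyperbolic structure on the marking graph, whose principal hyperbolic space is the curve graph, verifying the hypotheses of Theorem~\ref{theo:hhs} (properness of the marking graph, countability of $\mathfrak{S}$, action by isometries with an almost equivariant principal projection, nonelementarity on $\calc(\Sigma)$), and applying that theorem to $\Mod^*(\Sigma)$ and to any subgroup acting nonelementarily on the curve graph. The paper states this in three lines; your write-up is a correct expansion of the same argument.

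One caveat on the step you single out as needing ``genuine care'': your treatment of the sporadic surfaces $\Sigma_{1,1}$ and $\Sigma_{0,4}$ is both unnecessary and, as written, flawed. It is unnecessary because the HHS structure of \cite[Theorem~11.1]{BHS} already covers these surfaces (the index set consists of the whole surface together with the annular domains, and the principal hyperbolic space is still the curve graph, i.e.\ the Farey-type graph), so no separate argument is required. It is flawed because the Cayley graph of the virtually free group $\Mod^*(\Sigma_{1,1})$ is \emph{not} quasi-isometric to $\calc(\Sigma_{1,1})$: a power of a Dehn twist moves every vertex of the Farey graph a bounded distance while going to infinity in the Cayley graph, so the orbit map is not a quasi-isometry, and with your trivial HHS structure the nonelementarity hypothesis would be imposed on the wrong space. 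The conclusion for these cases is of course still true (the groups are virtually free, hence hyperbolic, hence properly proximal, and a subgroup with two independent loxodromics on the curve graph contains a free group and is nonelementary in the virtually free group), but the justification you give does not work; the cleanest repair is simply to invoke the BHS structure uniformly, as the paper does.
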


Notice that the extended mapping class group of a closed torus ($g=1$ and $n=0$) is isomorphic to $\mathrm{GL}(2,\mathbb{Z})$ and is therefore properly proximal. On the other hand, the mapping class group of a sphere with at most $3$ punctures ($g=0$ and $n\le 3$) is finite.

\begin{proof}
By  \cite[Theorem~11.1]{BHS}, the group $\Mod^*(\Sigma)$ is hierarchically hyperbolic, and in fact satisfies the assumptions from Theorem~\ref{theo:hhs}. The principal hyperbolic space in the HHS structure is the curve graph of $\Sigma$. The conclusion thus follows from Theorem~\ref{theo:hhs}. 
\end{proof}

More generally, we classify subgroups of $\Mod^*(\Sigma)$ that are properly proximal. We recall that the \emph{FC-center} $\WZ(G)$ of a group $G$ is defined as the subgroup of $G$ made of all elements that centralize a finite-index subgroup of $G$. 

\begin{theo}\label{theo:mcg}
Let $\Sigma$ be a connected orientable surface of finite type, and let $H$ be an infinite subgroup of $\Mod^*(\Sigma)$.

Then $H$ is properly proximal if and only if $\WZ(H)$ is finite.
\end{theo}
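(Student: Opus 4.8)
The plan is to prove both implications using the structure theory of subgroups of mapping class groups, together with Theorem~\ref{theo:hhs} for the hard direction. First I would dispose of the easy direction: if $WZ(H)$ is infinite, then $H$ is not properly proximal. Indeed, proper proximality passes to finite-index overgroups and is inherited by... actually the cleaner route is that if $WZ(H)$ is infinite, pick a finite-index subgroup $H_0\subseteq H$ and an infinite-order element (or an infinite subgroup) $z\in WZ(H)$ centralizing $H_0$; then $H_0$ has infinite center, hence by \cite[Proposition~1.6]{BIP} (proper proximality forbids infinite amenable -- in fact infinite -- centers, more precisely a properly proximal group has no nontrivial amenable... one should cite the precise statement from \cite{BIP} that a properly proximal group has finite center, or more precisely finite weak center) $H_0$ is not properly proximal, and since proper proximality is preserved under passing to finite-index subgroups and overgroups, neither is $H$. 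One must be a little careful that $WZ(H)$ infinite does force some finite-index subgroup to have infinite center: by definition each element of $WZ(H)$ centralizes a finite-index subgroup, but for the conclusion we can intersect: if $z_1,\dots,z_k\in WZ(H)$ then $\bigcap_i C_H(z_i)$ has finite index and centralizes $\langle z_1,\dots,z_k\rangle$; taking $z_i$ to generate an infinite subgroup of $WZ(H)$ (possible since $WZ(H)$ is infinite, though one may need finitely many generators of an infinite subgroup -- if $WZ(H)$ is not finitely generated, use that it still contains a finitely generated infinite subgroup, e.g.\ generated by enough elements, since any infinite group contains a finitely generated infinite subgroup) gives a finite-index subgroup with infinite center.

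For the converse -- the substantial direction -- assume $WZ(H)$ is finite; I must show $H$ is properly proximal. The tool is the classical trichotomy for subgroups of $\Mod^*(\Sigma)$ acting on the curve graph $\calc(\Sigma)$ (Ivanov, McCarthy--Papadopoulos): either (i) $H$ contains two independent pseudo-Anosov elements, i.e.\ $H$ acts nonelementarily on $\calc(\Sigma)$; or (ii) $H$ is \emph{reducible}, i.e.\ it virtually fixes a curve or a multicurve -- more precisely a finite-index subgroup of $H$ preserves an essential subsurface or multicurve; or (iii) $H$ is \emph{elementary}, either finite (excluded since $H$ is infinite) or virtually cyclic generated by a pseudo-Anosov, or virtually $\mathbb{Z}$ fixing the boundary of a (possibly trivial) subsurface. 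In case (i), $H$ is properly proximal directly by the first part of Theorem~\ref{theo:mcg} (equivalently by Theorem~\ref{theo:hhs} applied to the HHS structure on $\Mod^*(\Sigma)$, noting that $H$ acting nonelementarily on the principal hyperbolic space $\calc(\Sigma)$ suffices -- one does not need $H$ itself to be an HHG, since one applies the action version of the statement). In case (iii), $H$ virtually cyclic forces $WZ(H)$ to be finite-index, hence infinite, contradicting our hypothesis; so (iii) cannot occur. The remaining and genuinely delicate case is (ii): $H$ is reducible.

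In case (ii) the strategy is an induction on the complexity of $\Sigma$ via the canonical reduction system. Let $H_0\subseteq H$ be a finite-index subgroup preserving a canonical multicurve $\sigma$ (the canonical reduction system of $H$, which is $H$-invariant, so even $H$ preserves $\sigma$ up to finite index; in fact one can arrange $H$ itself preserves it by enlarging). Cutting along $\sigma$ decomposes into subsurfaces $\Sigma_1,\dots,\Sigma_m$, and there is a natural homomorphism $\rho\colon H_0\to \prod_j \Mod^*(\Sigma_j) \times \mathbb{Z}^{\sigma}$, where the $\mathbb{Z}^\sigma$ records the Dehn twists about the components of $\sigma$, with kernel contained in the group generated by these twists (hence central-ish: the twist subgroup commutes with everything preserving $\sigma$). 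Now here is the key point about the weak center: since $WZ(H)$ is finite, the image $\rho(H_0)$ must \emph{not} have a finite-index subgroup projecting to only finitely many factors with the rest amenable; concretely, one shows that for at least one $j$, the projection $H_j$ of $H_0$ to $\Mod^*(\Sigma_j)$ (or to $\Mod^*$ of a component-sum, handling the permutation action of $H$ on the $\Sigma_j$ by first passing to a further finite-index subgroup) again has finite weak center and is infinite; if every projection had infinite weak center or were finite, one could assemble a large weak center for $H$ using the twist subgroup about $\sigma$ and the weak centers of the factors, contradicting finiteness of $WZ(H)$. Then by induction on complexity, $H_j$ is properly proximal; but $H_j$ is a quotient of a finite-index subgroup of $H$, which is the wrong direction for inferring proper proximality. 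So instead one realizes $H_j$ as acting on the curve graph $\calc(\Sigma_j)$ via $H_0$ itself (the $j$-th subsurface projection is equivariant for a finite-index subgroup), and uses: $H_0$ acts on the hyperbolic space $\calc(\Sigma_j)$, and this action is nonelementary (because $H_j$ is not reducible/elementary at the bottom of the induction -- i.e.\ the induction base case is precisely that the relevant projection is nonelementary on some curve graph), so Theorem~\ref{theo:hhs}'s action version, or directly the dynamical criterion Lemma~\ref{lemma:north-south} applied with $\calx$ the HHS and $\calc S_0$ replaced by $\calc(\Sigma_j)$ viewed as a hyperbolic space in the hierarchy, yields proper proximality of the $H_0$-action. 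Hence $H_0$ is properly proximal, and so is $H$ by \cite[Proposition~1.6]{BIP}.

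The main obstacle, as I see it, is case (ii): one must carefully track how the weak center behaves under passing to the canonical reduction system -- in particular proving the dichotomy "either the weak center is infinite, or some piece acts nonelementarily on its curve graph" -- and one must set up the induction so that what gets applied is always the \emph{action} version of Theorem~\ref{theo:hhs} (proper proximality of an action of $H_0$ on an HHS, using a hyperbolic space from the hierarchy on which $H_0$ acts nonelementarily), never a statement about quotients. A secondary technical nuisance is the permutation action of $H$ on the components $\Sigma_j$ and on $\sigma$, which is handled by passing to a finite-index subgroup (finitely many times, using stability of proper proximality under finite-index). I would also double-check that the relevant hyperbolic spaces in the HHS structure of $\Mod^*(\Sigma)$ indeed include the curve graphs $\calc(\Sigma_j)$ of all essential subsurfaces with the required equivariance, which is exactly the content of \cite[Theorem~11.1]{BHS} and the subsurface projection machinery of \cite{MM}.
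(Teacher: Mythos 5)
Your easy direction and your treatment of case (i) are fine, and your overall instinct --- cut along an invariant multicurve, use finiteness of $WZ(H)$ to kill the central twist kernel, and feed nonelementary actions on curve graphs of the pieces into Theorem~\ref{theo:hhs} --- is the right one. But your case (ii), which is the heart of the theorem, has a genuine gap. You propose to find \emph{one} piece $\Sigma_j$ on which (a finite-index subgroup) $H_0$ acts nonelementarily, and to conclude proper proximality of $H_0$ from the properly proximal action on the hierarchically hyperbolic space attached to that single piece. This cannot work as stated: Lemma~\ref{lemma:proximality} requires the action of $H_0$ on the space to be \emph{proper}, and the $H_0$-action on the marking complex (or curve graph) of a single proper subsurface $\Sigma_j$ is never proper when $H_0$ contains elements supported on the other pieces or twists about $\sigma$ --- those elements have bounded orbits. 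So "at least one $j$ works" is not enough; you need the full product. Concretely, the paper takes a \emph{maximal} $H^0$-invariant multicurve $X$ (after passing to the finite-index subgroup $H^0$ acting trivially on homology mod $3$, so that Ivanov's result converts "virtually invariant curve" into "invariant curve" and maximality can actually be contradicted), obtains an \emph{injective} homomorphism $H^0\to\prod_{S}\Mod^*(S)$ over a sub-collection of pieces where each projection is nonabelian (injectivity uses finiteness of $WZ(H)$ to kill the central multitwist kernel), shows via Ivanov's classification that \emph{every} such projection contains two independent pseudo-Anosovs, and only then applies Theorem~\ref{theo:hhs} to each factor together with Lemma~\ref{lemma:product} and Lemma~\ref{lemma:proximality}, the properness of the diagonal action on the product of marking complexes coming precisely from the injectivity of the embedding.

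Two secondary problems. First, your asserted dichotomy "either the weak center is infinite, or some piece acts nonelementarily on its curve graph" is never proved, and it is the wrong dichotomy for the reason above; the correct statement concerns all pieces of a maximal decomposition simultaneously, and its proof needs the mod-$3$ congruence trick that your sketch omits (without it, "$H_0$ virtually fixes a curve" does not contradict maximality of an invariant multicurve). Second, your induction on complexity is both unnecessary and, as set up, broken: you correctly observe that the inductive conclusion lands on a \emph{quotient} of a finite-index subgroup of $H$, which is useless for proper proximality, but your proposed repair (act on one curve graph) is exactly the non-proper action of the previous paragraph. Maximality of the invariant multicurve replaces the induction in one step. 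You should also address surfaces with nonempty boundary separately (the paper glues once-holed disks onto the boundary components and uses finiteness of $WZ(H)$ to see that the resulting homomorphism is injective on $H$), since the Ivanov--McCarthy--Papadopoulos trichotomy and the curve-graph machinery are cleanest for boundaryless surfaces.
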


\begin{rk}
Notice in particular that the mapping class group of a surface of finite type with boundary is never properly proximal as the group of peripheral twists is central.

We mention that the connectedness assumption in Theorem~\ref{theo:mcg} can in fact be removed by working in the finite-index subgroup of $\Mod^*(\Sigma)$ that preserves each of the finitely many connected components. We have decided to write the proof in the case of a connected surface to improve its readability.

Also, the case where $\Sigma$ is a closed non-orientable surface can be treated by observing that, unless $\Sigma$ is a projective plane or a Klein bottle, $\Mod^*(\Sigma)$ virtually embeds in the mapping class group of an orientable double cover of $\Sigma$, see \cite[Lemma~4]{Fuj}.
\end{rk}   

\begin{proof}
If $WZ(H)$ is infinite, then $H$ is inner amenable in the sense that there exists an atomless mean on $H$ which is invariant under the $H$-action on itself by conjugation. Therefore $H$ cannot be properly proximal \cite[Proposition~1.6]{BIP}. 

Conversely, let $H$ be an infinite subgroup of $\Mod^*(\Sigma)$ such that $\WZ(H)$ is finite; we aim to show that $H$ is properly proximal.

If $\Sigma$ has nonempty boundary, let $\widehat{\Sigma}$ be a surface obtained from $\Sigma$ by gluing a once-holed disk on every boundary component of $\Sigma$. Then the inclusion $\Sigma\hookrightarrow\widehat{\Sigma}$ yields a homomorphism $\Mod^*(\Sigma)\to\Mod^*(\widehat{\Sigma})$ whose kernel is central and free abelian (made of boundary twists), see \cite[Proposition~3.19]{FM-primer}. As $\WZ(H)$ is finite, this homomorphism is injective when restricted to $H$. This enables us to reduce to the case where $\Sigma$ is boundaryless, so from now on we will work under this assumption. 

We can assume that $\Sigma$ is not a sphere with at most $3$ punctures (otherwise $\Mod^*(\Sigma)$ is finite) and is not a closed torus (otherwise $\Mod^*(\Sigma)$ is virtually free and the conclusion holds true). Let $H^0\subseteq H$ be a finite-index subgroup of $H$ made of mapping classes that act trivially on homology modulo $3$. A useful property of this subgroup is the following \cite[Corollary~3.6]{Iva}: every isotopy class of essential simple closed curves on $\Sigma$ which is virtually $H^0$-invariant, is in fact $H^0$-invariant. As proper proximality is a commensurability invariant \cite[Proposition~1.6]{BIP}, it is enough to prove that $H^0$ is properly proximal.

Let $X$ be a maximal collection of pairwise disjoint and pairwise non-isotopic essential simple closed curves on $\Sigma$ whose isotopy classes are $H^0$-invariant. Let $\mathbb{Y}$ be the finite set made of all connected components of $\Sigma\setminus X$. Then there is a homomorphism $$H^0\to\prod_{S\in\mathbb{Y}}\Mod^*(S),$$ whose kernel is central and abelian (made of multitwists about the curves in $X$). As $\WZ(H)$ is trivial, the kernel of this homomorphism is trivial, so $H^0$ embeds in a direct product of mapping class groups. As $H$ is infinite and $\WZ(H)$ is finite, up to replacing $\mathbb{Y}$ by a nonempty subset $\mathbb{Y}'$, we can assume that the image of $H^0$ in each factor is not virtually abelian.

We claim that for every subsurface $S\in\mathbb{Y}'$, the image $H_S$ of $H^0$ in $\Mod^*(S)$ contains two independent pseudo-Anosov mapping classes, and therefore acts nonelementarily on the curve graph of $S$. Indeed, if $H_S$ does not contain any pseudo-Anosov mapping class of $S$, then it follows from Ivanov's subgroup classification theorem \cite[Theorem~1]{Iva} that $H^0$ virtually fixes the isotopy class of an $S$-essential simple closed curve $c$ on $S$. As $H^0$ acts trivially on homology modulo $3$, the isotopy class of $c$ is in fact $H^0$-invariant \cite[Corollary~3.6]{Iva}, contradicting the maximality of $X$. Now, if $H_S$ contains a pseudo-Anosov mapping class of $S$ but does not contain two independent pseudo-Anosov mapping classes, then $H_S$ is virtually abelian \cite[Theorem~2]{Iva}, a contradiction. This completes the proof of our claim. 

Proper proximality of $H^0$ follows from the above claim in view of Lemma~\ref{lemma:product} and Theorem~\ref{theo:hhs}, using the hierarchically hyperbolic structure on mapping class groups given in \cite[Theorem~11.1]{BHS}, for which the principal hyperbolic space is the curve graph of the surface.   
\end{proof}

\section{Applications to von Neumann algebras}\label{sec:von-Neumann}

As mentioned in the introduction, our main motivation to prove proper proximality is to deduce rigidity properties for associated von Neumann algebras by applying the main result from \cite{BIP}.

\subsection{Von Neumann algebras associated to properly proximal groups}

Let $G\actson (X,\mu)$ be an ergodic measure-preserving essentially free action of a countable group $G$ on a standard probability space $(X,\mu)$. The rigidity results in \cite{BIP} involve considering group actions as above that are \emph{weakly compact} in the sense of Ozawa and Popa; we refer to \cite[Definition~3.1]{OzawaPopa1} for the definition. This notion generalizes the notion of the action being \emph{compact}, i.e.\ such that the image of $G$ in $\Aut(X,\mu)$ is relatively compact. In particular, when $G$ is residually finite, the $G$-action on its profinite completion (equipped with the Haar measure) is compact. We refer to \cite[Section~3]{OzawaPopa1} or \cite[Section~2.4]{Ioa} for more details. We mention that the groups considered in the present paper include many classes of groups that are residually finite. In particular, mapping class groups of finite-type surfaces are residually finite \cite{Gro}.

We recall that a \emph{Cartan subalgebra} of a von Neumann algebra $M$ is a maximal abelian subalgebra of $M$ whose normalizer in $M$ generates $M$. We refer to \cite[Definition~2.2]{BIP} for the definition of a \emph{weakly compact} Cartan subalgebra, and \cite[Proposition~3.2]{OzawaPopa} for the relationship to a weakly compact group action. The following theorem from \cite{BIP} applies to all groups considered in the present paper.

\begin{theo}[Boutonnet--Ioana--Peterson \cite{BIP}]\label{cor:von-neumann}
Let $G$ be a properly proximal countable group. Then the following hold.
\begin{enumerate}
\item The group von Neumann algebra $LG$ has no weakly compact Cartan subalgebra.
\item If $G\actson (X,\mu)$ is an ergodic measure-preserving essentially free action of $G$ on a standard probability space $(X,\mu)$, then
\begin{enumerate}
\item if the action $G\actson (X,\mu)$ is weakly compact, then $L^\infty(X)\rtimes G$ admits $L^\infty(X,\mu)$ as its unique weakly compact Cartan subalgebra, up to unitary conjugacy;
\item if the action $G\actson (X,\mu)$ is not weakly compact, then $L^\infty(X)\rtimes G$ does not contain any weakly compact Cartan subalgebra. 
\end{enumerate}
\end{enumerate}
\end{theo}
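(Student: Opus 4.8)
The plan is to deduce the statement from the main theorem of Boutonnet--Ioana--Peterson \cite{BIP} combined with the Ozawa--Popa dictionary \cite{OzawaPopa} relating weakly compact group actions with weakly compact Cartan subalgebras of crossed products, together with the proper proximality of all the groups established in the earlier sections. Recall that one of the principal results of \cite{BIP} asserts the following: if $G$ is a properly proximal countable group, if $G\actson(X,\mu)$ is an ergodic measure-preserving essentially free action on a standard probability space, and if we set $M:=L^\infty(X)\rtimes G$, then every weakly compact Cartan subalgebra of $M$ (in the sense of \cite[Definition~2.2]{BIP}) is unitarily conjugate in $M$ to $L^\infty(X)$; likewise, when $X$ is reduced to a point so that $M=LG$, no weakly compact Cartan subalgebra exists.

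First I would establish item~(1). A finite group preserves a probability measure on every compact space it acts on, hence is never properly proximal; so $G$ is infinite and $LG$ is diffuse. Applying the \cite{BIP} statement to the trivial action (with $X$ a one-point space, $M=LG$): a weakly compact Cartan subalgebra of $LG$ would have to be unitarily conjugate to $\mathbb{C}=L^\infty(\mathrm{pt})$, which is not a Cartan subalgebra of a diffuse von Neumann algebra. Hence $LG$ has no weakly compact Cartan subalgebra.

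Next I would prove item~(2). By \cite[Proposition~3.2]{OzawaPopa}, the Cartan subalgebra $L^\infty(X)$ of $M$ is weakly compact in the sense of \cite[Definition~2.2]{BIP} if and only if the action $G\actson(X,\mu)$ is weakly compact in the sense of \cite[Definition~3.1]{OzawaPopa}; moreover the property of being a weakly compact Cartan subalgebra is clearly invariant under unitary conjugacy. If the action is weakly compact, then $L^\infty(X)$ is a weakly compact Cartan subalgebra of $M$, and by the \cite{BIP} theorem it is the only one up to unitary conjugacy: this is~(2a). If the action is not weakly compact, then $L^\infty(X)$ fails to be a weakly compact Cartan subalgebra; if $M$ had some weakly compact Cartan subalgebra $A$, then $A$ would be unitarily conjugate to $L^\infty(X)$ by the \cite{BIP} theorem, whence $L^\infty(X)$ would itself be weakly compact, a contradiction. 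So $M$ has no weakly compact Cartan subalgebra, which is~(2b).

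Finally, that the theorem applies to all groups considered in this paper is immediate from the proper proximality results already proved: Theorems~\ref{theo:rank-one},~\ref{theo:main} and~\ref{theo:ccc} cover rank one $\cat$ groups and $\cat$ cubical groups, Theorem~\ref{thm:buildings} covers groups acting properly, minimally and nonelementarily on locally finite thick affine buildings, Corollary~\ref{corintro:dim-2} covers the two rank rigidity settings, Theorem~\ref{theo:hhs} covers hierarchically hyperbolic groups of the relevant type, and Theorem~\ref{theo:mcg} covers subgroups of mapping class groups with finite weak center (and in the residually finite cases, such as mapping class groups, one may also note that profinite actions are weakly compact, so~(2a) is nonvacuous). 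There is no genuine obstacle in this argument --- its whole content lies in \cite{BIP} and \cite{OzawaPopa}; the one point that deserves care is to invoke the \cite{BIP} conclusion in the sharp form ``every weakly compact Cartan subalgebra is unitarily conjugate to $L^\infty(X)$'', since it is exactly this uniqueness that turns the non-weakly-compact case of~(2b) into a non-existence statement.
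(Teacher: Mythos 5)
This statement is quoted in the paper verbatim as the main theorem of \cite{BIP}, with no proof given beyond the citation, so your proposal --- which locates all of the content in \cite{BIP} together with the Ozawa--Popa correspondence \cite[Proposition~3.2]{OzawaPopa} --- is essentially the same approach, and your unpacking of how (2a) and (2b) both follow from the uniqueness-up-to-unitary-conjugacy form of the \cite{BIP} theorem is correct. The one step to be careful with is your re-derivation of item~(1) by applying the crossed-product statement to a one-point space: for an infinite group that action is not essentially free, so the hypotheses of the crossed-product theorem are not met; this is harmless here only because \cite{BIP} states the assertion about $LG$ separately (as you in fact recall at the outset), and one should simply cite that form rather than attempt to deduce it from the group measure space case.
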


This has applications to rigidity questions. Namely, let $G$ be one of the groups considered in the present paper, and let $G\actson (X,\mu)$ be an ergodic measure-preserving essentially free action of $G$ on a standard probability space $(X,\mu)$. Let $H\curvearrowright (Y,\nu)$ be a weakly compact ergodic measure-preserving essentially free action of a countable group $H$ on a standard probability space $(Y,\nu)$. If $L^\infty(X,\mu)\rtimes G$ is isomorphic to $L^\infty(Y,\nu)\rtimes H$, then the action $G\actson (X,\mu)$ is weakly compact, and any isomorphism of von Neumann algebra between $L^\infty(X,\mu)\rtimes G$ and $L^\infty(Y,\nu)\rtimes H$ sends $L^\infty(X,\mu)$ to $L^\infty(Y,\nu)$ up to unitary conjugacy, so it follows from a theorem of Singer \cite{Sin} that the actions $G\curvearrowright (X,\mu)$ and $H\curvearrowright (Y,\nu)$ are orbit equivalent. In the presence of orbit equivalence rigidity results -- e.g.\ for mapping class groups by work of Kida \cite{Kid,Kid2}, see Section~\ref{sec:von-Neumann-mcg} below -- this leads to even stronger rigidity results.

\subsection{$\calc$-rigidity of $\cat$ cubical groups}

In the case of groups acting on $\cat$ cube complexes, we get a stronger statement which does not require restricting to weakly compact actions. This follows from works of Ozawa and Popa \cite[Theorem~2.3]{OzawaPopa} and Popa and Vaes \cite[Theorem~1.2 and~Remark~1.3]{PV} under the assumption that hyperplane stabilizers are not co-amenable (see also \cite[Corollary~A]{OzawaPopa} in the more restrictive case of profinite actions). The second property of the following statement is often called \emph{$\calc$-rigidity} of $G$. 

\begin{cor}\label{cor:neumann-cat}
Let $G$ be a countable group acting properly nonelementarily by cubical automorphisms on a proper finite-dimensional $\cat$ cube complex. Then the following hold.
\begin{enumerate}
\item The group von Neumann algebra $LG$ does not contain any Cartan subalgebra.
\item If $G\actson (X,\mu)$ is an ergodic measure-preserving essentially free action of $G$ on a standard probability space $(X,\mu)$, then $L^\infty(X)\rtimes G$ contains $L^\infty(X)$ as its unique Cartan subalgebra, up to unitary conjugacy.
\end{enumerate}
\end{cor}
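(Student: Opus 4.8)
The plan is to combine the proper proximality of $G$ with weak amenability and then quote the now-standard strong-solidity machinery; the only genuinely new input is the proper proximality established in Corollary~\ref{corintro:ccc}, everything else being a matter of citing the operator-algebraic literature.

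By Corollary~\ref{corintro:ccc}, the group $G$ is properly proximal, so Theorem~\ref{cor:von-neumann} applies: $LG$ has no \emph{weakly compact} Cartan subalgebra, and for any ergodic measure-preserving essentially free action $G\actson(X,\mu)$ the only weakly compact Cartan subalgebra that $L^\infty(X)\rtimes G$ can possibly have is $L^\infty(X)$, up to unitary conjugacy. On the other hand, by a theorem of Guentner and Higson \cite{GH}, a group acting properly by cubical automorphisms on a finite-dimensional $\cat$ cube complex is weakly amenable with Cowling--Haagerup constant equal to $1$; this is the only place where finite-dimensionality of the complex and properness of the action are used.

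It remains to delete the words ``weakly compact'' from both conclusions, and this is precisely what weak amenability with constant $1$ provides, through the Ozawa--Popa argument in the form of \cite[Theorem~2.3]{OzawaPopa} together with Popa--Vaes \cite[Theorem~1.2 and Remark~1.3]{PV}: for a weakly amenable group $G$ with Cowling--Haagerup constant $1$ and any Cartan subalgebra $A$ of $M:=L^\infty(X)\rtimes G$, the weak-compactness estimates coming from a completely bounded approximate identity of $G$ allow one to run the relative-solidity dichotomy for the inclusion $A\subseteq M$, at the end of which Popa's intertwining-by-bimodules together with Theorem~\ref{cor:von-neumann} forces $A$ to be unitarily conjugate to $L^\infty(X)$; taking $(X,\mu)$ to be a point gives the first assertion. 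I expect the only delicate point to be this last step --- namely checking that the dynamical input coming from proper proximality of $G$ is exactly what is needed to feed the Ozawa--Popa dichotomy in the cube-complex setting, playing the role of the hypothesis in \cite{OzawaPopa,PV} that hyperplane stabilizers be non-co-amenable (which is in any case automatic under our hypotheses, since a co-amenable hyperplane stabilizer would produce a $G$-invariant mean on a $G$-orbit of half-spaces of the cube complex, contradicting nonelementarity of the action).
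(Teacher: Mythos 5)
Your proposal is correct and takes essentially the same route as the paper: proper proximality from Theorem~\ref{theo:ccc} plus weak amenability from Guentner--Higson \cite{GH}, combined through the Ozawa--Popa/Popa--Vaes weak-compactness mechanism --- the only difference being that the paper cites this last combination in packaged form as \cite[Theorem~1.5]{BIP}, whereas you unroll its proof sketch. One caveat: your parenthetical claim that non-co-amenability of hyperplane stabilizers is automatic under the hypotheses is neither needed for this argument nor obviously true (a co-amenable stabilizer gives an invariant mean, not an invariant probability measure, on the orbit of half-spaces), and the paper's point in mentioning \cite{OzawaPopa,PV} is precisely that their route requires that extra hypothesis while the properly proximal route does not.
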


\begin{proof}
The group $G$ is properly proximal by Theorem~\ref{theo:ccc}, and weakly amenable by a theorem of Guentner and Higson \cite{GH}. The conclusion thus follows from \cite[Theorem~1.5]{BIP}. 
\end{proof}

Again, this has applications to rigidity questions. Namely, let $G$ be a group acting properly nonelementarily by cubical automorphisms on a proper finite-dimensional $\cat$ cube complex, and let $G\actson (X,\mu)$ be an ergodic measure-preserving essentially free $G$-action on a standard probability space $(X,\mu)$. Let $H\curvearrowright (Y,\nu)$ be an ergodic measure-preserving essentially free action of a countable group $H$ on a standard probability space $(Y,\nu)$. If $L^\infty(X,\mu)\rtimes G$ is isomorphic to $L^\infty(Y,\nu)\rtimes H$, then the actions $G\curvearrowright (X,\mu)$ and $H\curvearrowright (Y,\nu)$ are orbit equivalent. 

\subsection{Superrigidity of weakly compact actions of mapping class groups}\label{sec:von-Neumann-mcg}

In the case of mapping class groups, we can combine Theorem~\ref{cor:von-neumann} with work of Kida \cite{Kid2,Kid}  establishing the orbit equivalence rigidity of mapping class group actions, to get an even stronger rigidity statement.

Recall that two actions $G\actson (X,\mu)$ and $H\actson (Y,\nu)$ are \emph{stably conjugate} if there exist finite-index subgroups $G^0\subseteq G$ and $H^0\subseteq H$, finite normal subgroups $F_G\unlhd G^0$ and $F_H\unlhd H^0$ and actions $G^0\actson (X_0,\mu_0)$ and $H^0\actson (Y_0,\nu_0)$ such that the original actions of $G$ and $H$ are induced from the actions of $G^0$ and $H^0$, and the actions $G^0/F_G\actson X_0/F_G$ and $H^0/F_H\actson X_0/F_H$ are conjugate.

\begin{theo}\label{theo:kida}
Let $g,n\in\mathbb{N}$, and let $\Sigma$ be a connected, oriented surface of genus $g$ with $n$ points removed, with $3g+n-4>0$. Let $\Mod(\Sigma)\actson (X,\mu)$ be an ergodic measure-preserving essentially free action of $\Mod(\Sigma)$ on a standard probability space $(X,\mu)$, and let $H\actson (Y,\nu)$ be a weakly compact ergodic measure-preserving essentially free action of a countable group $H$ on a standard probability space $(Y,\nu)$.

If $L^\infty(X,\mu)\rtimes \Mod(\Sigma)$ is isomorphic to $L^\infty(Y,\nu)\rtimes H$, then the actions $\Mod(\Sigma)\actson (X,\mu)$ and $H\actson (Y,\nu)$ are stably conjugate.  
\qed
\end{theo}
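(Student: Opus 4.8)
The plan is to turn the isomorphism of von Neumann algebras into an orbit equivalence of the two actions, using the Cartan-uniqueness half of Theorem~\ref{cor:von-neumann}, and then to feed that orbit equivalence into Kida's orbit equivalence rigidity theorem for mapping class groups \cite{Kid,Kid2}. The first ingredient is that $\Mod(\Sigma)$ is properly proximal: since $3g+n-4>0$ implies $3g+n-3>0$, the weak center of $\Mod(\Sigma)$ is finite (it is trivial except in a few low-complexity cases, such as the closed genus-$2$ surface where it is generated by the hyperelliptic involution), so Theorem~\ref{theo:mcg} applies to $H=\Mod(\Sigma)$ and gives proper proximality.

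Next I would set $M:=L^\infty(X,\mu)\rtimes\Mod(\Sigma)$ and fix an isomorphism $\theta\colon L^\infty(Y,\nu)\rtimes H\to M$. Then $A:=\theta(L^\infty(Y,\nu))$ is a Cartan subalgebra of $M$, and because the action $H\actson(Y,\nu)$ is weakly compact and weak compactness of a Cartan inclusion is an intrinsic property of the inclusion (cf.\ \cite[Proposition~3.2]{OzawaPopa}), $A$ is a weakly compact Cartan subalgebra of $M$. Applying Theorem~\ref{cor:von-neumann} to the properly proximal group $\Mod(\Sigma)$: part~(2)(b) forbids $M$ from containing any weakly compact Cartan subalgebra when $\Mod(\Sigma)\actson(X,\mu)$ is not weakly compact, so the mere existence of $A$ forces $\Mod(\Sigma)\actson(X,\mu)$ to be weakly compact; then part~(2)(a) identifies $L^\infty(X,\mu)$ as the unique weakly compact Cartan subalgebra of $M$ up to unitary conjugacy. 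Hence there is a unitary $u\in M$ with $uAu^*=L^\infty(X,\mu)$, and after replacing $\theta$ by $\mathrm{Ad}(u)\circ\theta$ we may assume $\theta(L^\infty(Y,\nu))=L^\infty(X,\mu)$. By Singer's theorem \cite{Sin} (equivalently, the Feldman--Moore groupoid picture), the actions $\Mod(\Sigma)\actson(X,\mu)$ and $H\actson(Y,\nu)$ are orbit equivalent.

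Finally I would invoke Kida's orbit equivalence rigidity theorem \cite{Kid,Kid2}: for $\Sigma$ with $3g+n-4>0$, any essentially free ergodic probability measure-preserving action of a countable group $H$ that is orbit equivalent to an essentially free ergodic probability measure-preserving action of $\Mod(\Sigma)$ is stably conjugate to it; this yields the first assertion. For the second, assume in addition that $\Mod(\Sigma)\actson(X,\mu)$ is aperiodic. In the data of a stable conjugacy the $\Mod(\Sigma)$-action is induced from an action of a finite-index subgroup $G^0\subseteq\Mod(\Sigma)$; were $G^0$ proper, it would preserve each of the $[\Mod(\Sigma):G^0]>1$ copies of the base space and hence fail to act ergodically, contradicting aperiodicity. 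So $G^0=\Mod(\Sigma)$, no induction occurs, and — combined with the precise form of Kida's theorem — the stable conjugacy becomes an honest conjugacy of $\Mod(\Sigma)\actson(X,\mu)$ with $H\actson(Y,\nu)$.

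I expect the last paragraph to be the main obstacle: one must cite Kida's theorem in exactly the formulation matching the notion of stable conjugacy adopted here, and then carefully check that aperiodicity removes the passage to finite-index subgroups (and with it the residual finite normal subgroups) so as to produce a genuine conjugacy. Everything before that is a formal assembly of results already in hand — proper proximality of $\Mod(\Sigma)$ via Theorem~\ref{theo:mcg}, the uniqueness of weakly compact Cartan subalgebras from \cite{BIP} (Theorem~\ref{cor:von-neumann}), and Singer's theorem.
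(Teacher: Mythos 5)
Your proposal is correct and follows exactly the route the paper takes (the paper states Theorem~\ref{theo:kida} with no separate proof, relying on the assembly spelled out in Section~\ref{sec:von-Neumann}): proper proximality of $\Mod(\Sigma)$, the uniqueness of the weakly compact Cartan subalgebra from Theorem~\ref{cor:von-neumann}, Singer's theorem to pass to an orbit equivalence, and Kida's orbit equivalence rigidity to conclude stable conjugacy, with aperiodicity upgrading this to genuine conjugacy. Your closing caveat about matching Kida's formulation of stable conjugacy and handling the finite normal subgroups in the aperiodic case is well placed, as this is precisely the point the paper itself leaves implicit.
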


We would like to conclude by making a few remarks about this statement. 

First, the notion of stable superrigidity from \cite{PV2} deals better with finite index phenomena. The action $G\actson (X,\mu)$ is \emph{stably $W^*_{wc}$-superrigid} if for every weakly compact measure-preserving ergodic essentially free action $H\actson (Y,\nu)$ on a standard probability space $(Y,\nu)$, if there exists an isomorphism $\theta$ from $L^\infty(X)\rtimes G$ to an augmentation $(L^\infty(Y)\rtimes H)^t$, then the actions $G\actson (X,\mu)$ and $H\actson (Y,\nu)$ are stably conjugate. In fact, we get that all ergodic measure-preserving essentially free actions of the mapping class group on standard probability spaces are $W^*_{wc}$-superrigid. In fact, the isomorphism $\theta$ can be described explicitly from a map realizing the stable conjugation.     

Second, we mention that a similar result also holds for related classes of groups. For example, one can let $G$ be a  direct product of mapping class groups: indeed, direct products of properly proximal groups are again properly proximal \cite[Proposition~4.10]{BIP}, and the orbit equivalence rigidity result from \cite{Kid} still holds for products of mapping class groups. Also, Chifan and Kida showed in \cite{CK} that many interesting subgroups of the mapping class groups which act nonelementarily on the curve graph -- such as the Torelli subgroup -- are rigid for measure equivalence, and therefore their ergodic actions are rigid for orbit equivalence. Such groups are properly proximal by Theorem~\ref{theo:mcg}, so an analogue of Theorem~\ref{theo:kida} also holds for these groups. 
 
\footnotesize 


\begin{flushleft}
Camille Horbez\\
Universit\'e Paris-Saclay, CNRS,  Laboratoire de math\'ematiques d'Orsay, 91405, Orsay, France \\
\emph{e-mail:}\texttt{camille.horbez@universite-paris-saclay.fr}\\[8mm]
\end{flushleft}

\begin{flushleft}
Jingyin Huang\\
Department of Mathematics\\
The Ohio State University, 100 Math Tower\\
231 W 18th Ave, Columbus, OH 43210, U.S.\\
\emph{e-mail:~}\texttt{huang.929@osu.edu}\\[8mm]
\end{flushleft}

\begin{flushleft}
Jean Lécureux\\
Universit\'e Paris-Saclay, CNRS,  Laboratoire de math\'ematiques d'Orsay, 91405, Orsay, France \\
\emph{e-mail:}\texttt{jean.lecureux@universite-paris-saclay.fr}\\
\end{flushleft}

\end{document}